\numberwithin{equation}{section}
\theoremstyle{plain}
\newcommand*\Bell{\ensuremath{\boldsymbol\ell}}
\newtheorem{theorem}{Theorem}
\newtheorem{definition}[theorem]{Definition}
\newtheorem{lemma}[theorem]{Lemma}
\newtheorem{proposition}[theorem]{Proposition}
\newtheorem{remark}[theorem]{Remark}
\newtheorem{assumption}[theorem]{Assumption}
\newcommand{\gao}[1]{\textcolor{black}{#1}}
\begin{document}

\begin{center}
  \Large \bf  Affine Point Processes: Refinements to Large-Time Asymptotics
\end{center}

\author{}
\begin{center}
{Xuefeng
  Gao}\,\footnote{Department of Systems
    Engineering and Engineering Management, The Chinese University of Hong Kong, Shatin, N.T. Hong Kong;
    xfgao@se.cuhk.edu.hk},
  Lingjiong Zhu\,\footnote{Department of Mathematics, Florida State University, 1017 Academic Way, Tallahassee, FL-32306, United States of America; zhu@math.fsu.edu.
  }
\end{center}


\begin{abstract}%
Affine point processes are a class of simple point processes with self- and mutually-exciting properties, 
and they have found useful applications in several areas.
 In this paper,
we obtain large-time asymptotic expansions in large deviations and refined central limit theorem for affine point processes, using the framework of
mod-$\phi$ convergence. Our results extend the large-time limit theorems in [Zhang et al. 2015. \textit{Math. Oper. Res.}
\textbf{40}(4), 797-819]. The resulting explicit approximations for large deviation probabilities and tail expectations can be used as an alternative to importance sampling Monte Carlo
simulations. Numerical experiments illustrate our results. 
\end{abstract}




\section{Introduction} 

Affine point processes, as described in \cite{Errais}, are versatile Markovian models often
used to capture the ``clustering'' feature of event arrivals. A point process is affine if 
its intensity is an affine function of an affine jump diffusion (see \cite{Duffie2000}) and its jump size are drawn from a fixed distribution.
The Poisson process and Markovian Hawkes processes (see \cite{Hawkes}) are special cases of affine point processes.
The components of affine point processes are self-- and mutually--exciting, i.e., the occurrence of an earlier point affects the probability of occurrence of later points of all types. In addition, 
this family of point processes is tractable as the Fourier transform of an affine point process is
an exponentially affine function of the driving jump-diffusion, and the coefficients of these affine functions solve certain ordinary differential equations (ODEs); see, e.g., \cite{Errais}.
Hence, affine point processes and the special case Markovian Hawkes processes have become popular models in several applications. Examples include finance and economics (\cite{AitSahalia2015, Bowsher2007, Errais, Hewlett2006}), social networks (\cite{Fox2016, Pinto2015}), queueing (\cite{Daw2018, GaoZhu2016b, Koops2018}) and others.


In a recent work, Zhang et al. \cite{Zhang} analyzed the large--time asymptotics of affine point processes. Specifically,
they consider a multi-dimensional affine point process $(L_1, \ldots, L_n)$, and they
establish a central limit theorem and a large deviation principle for $V(t):=\sum_{i=1}^{n}L_{i}(t)$ as $t \rightarrow \infty$.
The large deviations result in \cite{Zhang} is
of \textit{logarithmic asymptotics} type, i.e.,
they obtained the limit $\mathcal{I}(R):=-\lim_{t\rightarrow\infty}\frac{1}{t}\log\mathbb{P}(V(t) \geq Rt)$
for $R$ being a suitably large number. 
Relying on the large deviations result, they developed an asymptotically optimal importance sampling algorithm to estimate small tail probabilities $\mathbb{P}(V(t) \geq Rt)$.

In this paper, we derive explicit asymptotic expansions of large deviation probabilities $\mathbb{P}(V(t) \geq Rt)$ and more generally, tail expectations $\mathbb{E}\left[g(V(t)- R t) \cdot 1_{V(t) \geq R t} \right]$  for a wide class of real-valued functions $g$ as $t \rightarrow \infty$. See Theorem~\ref{thm:preciseLDP}.  In particular, our results
improve the logarithmic-scale large deviations asymptotics in \cite{Zhang}. By truncating the asymptotic expansions, we also obtain explicit approximations for large deviation probabilities and tail expectations associated with affine point processes. 
These approximations can be useful when exact computations or numerical inversion methods have difficulties,
e.g., when the ODEs governing the transform of an affine point process do not have closed-form solutions, and the probability of the event $\{V(t) \geq R t\}$ is very small. 
Our numerical studies 
illustrate that these analytical approximations can be accurate in the large-time regime, and they are faster to evaluate than importance sampling simulations in \cite{Zhang}. So these approximations can serve as alternatives or complementary tools to the 
Monte Carlo simulation which can be computer resource extensive.

Deriving asymptotic expansions of large deviation probabilities and tail expectations for affine point processes in the large--time regime is a difficult problem, because such point processes have complex self-- and mutually--exciting dependence structures,
and also we aim to obtain asymptotic expansions beyond the logarithmic asymptotics. In the literature,
asymptotic expansions for large deviation probabilities date back to \cite{Bahadur1960, Cramer}, where they derived expansions
for tail probabilities of sums of independent and identically distributed (i.i.d) random variables, which can be applied to Poisson processes. See also \cite{Dembo, Petrov}.
These asymptotic expansions go beyond the logarithmic asymptotics known as the large
deviation principle in \cite{VaradhanLDP}, also known as the Donsker--Varadhan type large deviations (see e.g. \cite{Donsker}).
Beyond the i.i.d case, there are some very general and relatively easy-to-check conditions
to guarantee the large deviation principle,
e.g., G\"{a}rtner-Ellis theorem, which is used in \cite{Zhang} to obtain the large deviation principle for affine point processes.
Nevertheless, there are not many general results for asymptotic expansions beyond the logarithmic asymptotics.
Chaganty and Sethuraman \cite{CS93} extended
the G\"{a}rtner-Ellis theorem and obtained a more refined large deviations result, the so-called
strong large deviation, but their results do not contain higher--order expansions.
See also \cite{Joutard}.  While some prior work (see e.g. \cite{Bordenave2007, Hult2010, ZhuI} and the references therein) have studied large deviations of certain point processes,
they also do not obtain asymptotic expansions. Higher-order large deviations expansions are only known for some special models, see e.g. \cite{BR2002}.

To overcome this difficulty, in this paper we develop a new approach that is based on
the mod-$\phi$ convergence theory studied recently in the probability literature, see e.g. \cite{Delbaen2014, Jacod2011, modphi}. In particular, the authors of \cite{modphi} used the framework of mod-$\phi$ convergence to obtain precise estimates of $\mathbb{P}(X_{n}\in t_{n} B)$  for large $t_n$ where $B$ is a Borel set,
instead of the usual asymptotic estimates for the rate of exponential decay $\lim_{n\rightarrow\infty}\frac{1}{n}\log\mathbb{P}(X_{n}\in t_{n}B)$,
for quite general sequences of random variables $(X_{n})_{n\in\mathbb{N}}$. The main idea of this framework is to look for a renormalization of the characteristic functions of $(X_{n})_{n\in\mathbb{N}}$ so that the sequence of renormalized characteristic functions converges. 
See Section~\ref{sec:extended-modphi} for mathematical details. The mod-$\phi$ framework allows one to obtain precise large deviations and refined central limit theorems  for $(X_{n})_{n\in\mathbb{N}}$ simultaneously, if one can verify that $(X_{n})_{n\in\mathbb{N}}$ converges in the mod-$\phi$ sense where $\phi$ is a (non-constant) infinitely divisible distribution. The convergence speed determines the order of asymptotic expansions of the deviation probabilities $\mathbb{P}(X_{n}\in t_{n} B)$. 

%

Specifically, our approach to derive asymptotic expansions of large deviation probabilities and tail expectations for affine point processes is as follows. 

First, we extend and sharpen the results in \cite{modphi} for quite general sequence of random variables. See Propositions~\ref{prop:modphi-lattice} and \ref{prop:modphi-nonlattice-gz}.
The paper \cite{modphi} focused on asymptotic expansions of deviation probabilities, and we extend their results to obtain asymptotic expansions of tail expectations in the context of mod-$\phi$ convergence. In addition, in the case when $\phi$ is non-lattice distributed, the paper \cite{modphi} obtained expansions of deviation probabilities of zero-th order. With additional assumptions, we obtain higher order expansions of deviation probabilities and tail expectations, by some delicate applications of
Esseen's smoothing inequality, Laplace's method and Fa\`{a} di Bruno's formula (see Appendix~\ref{sec:known}). These results hold for quite general sequence of random variables, so they are of independent interest and can be useful in other applications, e.g. establish limit theorems for other stochastic processes.

Second, we prove that for affine point processes, the sequence of random variables $(V(t))_{t>0}$ converges mod-$\phi$ exponentially fast as $t \rightarrow \infty$, see Theorem~\ref{thm:app-mod}. This step is nontrivial as one needs to identify the infinitely divisible distribution $\phi$, establish the exponential convergence and characterize the limiting function from the sequence of renormalized characteristic functions of $V(t)$ as $t$ is sent to infinity. Our proof relies on the measure-change technique in \cite{Zhang}, and a careful analysis of the affine structure of the point process as well as the ODEs governing the characteristic function of an affine point process.

With these two steps, we can then apply the general extended results in the first step to $(V(t))_{t>0}$, and
obtain explicit asymptotic expansions of large deviation probabilities and tail expectations for affine point processes. Moreover, from \cite{modphi}, the mod-$\phi$ convergence of $(V(t))_{t>0}$ we establish also implies a refined central limit theorem for affine point processes (see Theorem~\ref{CLTThm}), which extends the central limit theorem in \cite{Zhang}.  While we have limited the discussions to affine point processes in this paper due to their analytical tractability, we remark that the tools developed in this paper can be potentially used in other settings as well.

Our asymptotic analysis via the mod-$\phi$ convergence theory sheds some new insight on the long term behavior of affine point processes. 
Consider the fluctuation of $V(t)$ around its mean when time $t$ is large. We find that for 
fluctuations of smaller scales ($O(t^{1/2})$ to $o(t^{2/3})$), 
$V(t)$ behaves like a sum of $t$ i.i.d. random variables with some infinitely divisible distribution $\phi$ that we identify. 
On the other hand, for fluctuations of order $O(t)$, this is not true any more and we identify the correcting factor (the function $\psi$ in Theorem~\ref{thm:app-mod}).


The rest of the paper is organized as follows. Section~\ref{sec:app} introduces affine point processes and existing large-time asymptotics results in \cite{Zhang}. Section~\ref{sec:main} presents our main results. In Section~\ref{sec:extended-modphi}, we review the mod-$\phi$ convergence framework in \cite{modphi} and present related new results for quite general sequence of random variables.  In Section~\ref{sec:mod-app}, we establish the mod-$\phi$ convergence of $(V(t))_{t>0}$ and prove the main results in Section~\ref{sec:main}. Section~\ref{sec:numerical} presents numerical studies. Finally, proofs of other technical results and the computations of expansion coefficients for affine point processes are collected in the Appendix.

\section{Affine point processes and existing results} \label{sec:app}
To make the paper self-contained, we follow \cite{Zhang} to
introduce affine point processes in this section and review their results on large-time asymptotics of such point processes.

We fix a complete probability space $(\Omega, \mathbb{P}, \mathbb{F} )$ and a filtration $\{\mathbb{F}_t : t \ge 0\}$ satisfying the usual
conditions of right continuity and completeness (see, e.g. \cite{Karatzas2012}). We write $\mathbb{R}^d_+=\{y \in \mathbb{R}^d: y_i \ge 0, i=1, \ldots, d\}$.
Let $W = (W (t) : t \ge 0)$ be a standard $d$-dimensional Brownian motion. Let $X=(X_{1},\ldots,X_{d})$ be an affine jump diffusion satisfying the stochastic differential equation:
\begin{equation} \label{eq:SDE-affine}
dX(t)=\mu(X(t))dt+\sigma(X(t))dW(t)
+\sum_{i=1}^{n}\gamma_{i}\int_{\mathbb{R}_{+}}zN_{i}(dt,dz),
\end{equation}
with $X(0)=x_{0}$, where the drift and volatility functions are given by
\begin{align*}
&\mu(x)=b-\beta x,\quad b\in\mathbb{R}^{d},\quad\beta\in\mathbb{R}^{d\times d},
\\
&\sigma(x)\sigma(x)^{T}=a+\sum_{j=1}^{d}\alpha^{j}x_{j},
\quad
a\in\mathbb{R}^{d\times d},
\quad
\alpha^{j}\in\mathbb{R}^{d\times d},
\quad
j=1,\ldots,d.
\end{align*}
Here $\gamma_{i}\in\mathbb{R}^{d}$ and $N_{i}(dt,dz)$ is a random counting measure
on $[0, \infty) \times\mathbb{R}_{+}$ with compensator measure
$\Lambda_{i}(X(t))dt\varphi_{i}(dz)$, where $\varphi_{i}$ is a probability measure on $\mathbb{R}_{+}$
and
\begin{equation*}
\Lambda_{i}(x)=\lambda_{i}+\sum_{j=1}^{d}\kappa_{i,j}x_{j}, \quad \text{for $i=1, \ldots, n$.}
\end{equation*}
We use $Z_i$ to denote a random variable having distribution $\varphi_i$. An $n-$dimensional affine point process $L=(L_{1},\ldots,L_{n})$ is given by
\begin{equation*}
L_{i}(t):=\int_{0}^{t}\int_{\mathbb{R}_{+}}zN_{i}(ds,dz).
\end{equation*}
We note that the term $\sum_{i=1}^{n}\gamma_{i}\int_{\mathbb{R}_{+}}zN_{i}(dt,dz)$ in \eqref{eq:SDE-affine} introduces self- and mutual-excitation into $L$, i.e., the timing and marks of past events of type $i$ will directly impact the intensity and hence the future evolution of the point process.
Without this term, such effects are absent. In addition, when $\sigma(x)=0$ in \eqref{eq:SDE-affine}, the affine point process reduces to its special case Markovian Hawkes process in \cite{Hawkes}.
Further examples of affine point processes will be illustrated in Section~\ref{sec:numerical}.

The study \cite{Zhang} obtained the large-time asymptotics (See Theorems~\ref{thm:zhang1} and \ref{thm:zhang2} below) of
\begin{equation} \label{eq:Vt}
V(t):=\sum_{i=1}^{n}L_{i}(t).
\end{equation}
We aim to sharpen their results. To this end,
we follow \cite{Zhang} to impose the following assumption throughout the paper.
We use the following notations: $\textbf{A}_{I, J}=(\textbf{A}_{ij}: i \in I, j \in J)$
for a matrix $\textbf{A}$ and two index sets $I$ and $J$,  $\textbf{A}^T$ denotes the transpose,
and $Id(i)$ denotes a matrix with all entries equal to $0$ except the $i$-th diagonal entry, which is $1$.

\begin{assumption} \label{assump1}
(I) There exist index sets $I=\{1, \ldots, m\}$ and $J=\{m+1, \ldots, d\}$ such that \\
(1) $a$ is a symmetric positive semi-definite matrix with $a_{I,I}=0$.\\
(2) $\alpha^i$ is symmetric positive semi-definite and $\alpha_{I,I}^{i}= \alpha_{i,i}^{i} Id(i)$ for each $i \in I$; $\alpha^i=0$ for $i \in J$.\\
(3) $b \in \mathbb{R}^m_{+} \times \mathbb{R}^{d-m}$.\\
(4) $\beta_{I,J}=0$ and $\beta_{I,I}$ has nonpositive off-diagonal elements.\\
(5) $\lambda = (\lambda_i) \in \mathbb{R}^n_{+}$, $\kappa \in \mathbb{R}^{n \times d}$ with $\kappa _{i,J}=0$ for $i=1, \ldots, n$.\\
(6) $\gamma_i \in \mathbb{R}^m_{+} \times \mathbb{R}^{d-m}$, for $i=1, \ldots, n$.\\
(II) $\alpha^i_{i,i}>0$ and $b_i>0$ for each $i=1, \ldots,m$; $\lambda_{i}+\sum_{j=1}^{m}\kappa_{i,j}> 0$ for each $i=1, \ldots,n$. \\
(III) $\beta-\sum_{i=1}^{n}\mathbb{E}[Z_i]\gamma_{i}\kappa_{i}^{T}$ is positive stable, where $\kappa_i^T$ is the $i$-th row of ${\kappa}=(\kappa_{i,j})$, $i=1, \ldots, n$.
\end{assumption}
This assumption on the parameters ($\alpha$, $a$, $b$, $\beta$, $\lambda$, $\kappa$, $\gamma$) of the SDE in \eqref{eq:SDE-affine}
essentially ensures that the affine point process is properly defined and $X(\cdot)$ in \eqref{eq:SDE-affine} is ergodic.
We refer the readers to \cite{Zhang} for further discussions on this assumption.

We next summarize two main mathematical results in \cite{Zhang}. The first result is  a central limit theorem for $V(t)$ as $t \rightarrow \infty$.

\begin{theorem}[Theorem 1 in \cite{Zhang}] \label{thm:zhang1}
Assume $\mathbb{E}[(Z_i)^{(2+\epsilon)}]<\infty$ for some $\epsilon>0$ for all $i=1, \ldots, n$. Under Assumption~\ref{assump1},
we have as $t \rightarrow \infty$,
\begin{equation*}
\frac{V(t)-rt}{\sqrt{t}}\rightarrow N(0,\sigma^{2}),
\end{equation*}
in distribution, where
\begin{align}
&r=\mathcal{A}^{T}b+\sum_{i=1}^{n}\lambda_{i}\mathbb{E}[Z_i](1+\mathcal{A}^{T}\gamma_{i}),
\nonumber 
\\
&\sigma^{2}=\mathcal{A}^{T}a\mathcal{A}+\mathcal{C}^{T}\lambda+(\mathcal{A}^{T}\alpha\mathcal{A}+\mathcal{C}^{T}\kappa)\mathcal{B},
\nonumber
\\
&\mathcal{A}^{T}=\left(\sum_{i=1}^{n}\mathbb{E}[Z_i]\kappa_{i}^{T}\right)
\left(\beta-\sum_{i=1}^{n}\mathbb{E}[Z_i]\gamma_{i}\kappa_{i}^{T}\right)^{-1},
\nonumber 
\\
&\mathcal{B}=\left(\beta-\sum_{i=1}^{n}\mathbb{E}[Z_i]\gamma_{i}\kappa_{i}^{T}\right)^{-1}\left(b+\sum_{i=1}^{n}\lambda_{i}\mathbb{E}[Z_i]\gamma_{i}\right),
\nonumber
\\
&\mathcal{C}_{i}=(1+\mathcal{A}^{T}\gamma_{i})^{2}\mathbb{E}(Z_i)^{2},\qquad i=1,2,\ldots,n.
\nonumber
\end{align}
\end{theorem}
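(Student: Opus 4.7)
My plan is to prove the CLT via a moment--generating--function analysis that exploits the affine structure of the joint process $(X, V)$. Adjoining $V$ to $X$ preserves affineness, so a standard Duffie--Pan--Singleton argument shows that for $u$ in a neighborhood of $0$ in $\mathbb{C}$,
\begin{equation*}
\mathbb{E}\bigl[e^{uV(t)} \bigm| X(0)=x_0\bigr] = \exp\bigl(\phi(t,u) + \psi(t,u)^T x_0\bigr),
\end{equation*}
where $(\phi,\psi)$ satisfy a Riccati--type ODE system obtained by applying the generator of the joint process to $e^{uv+\theta^T x}$ and matching exponential--affine coefficients, with $\psi(0,u)=0$ and $\phi(0,u)=0$. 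The quadratic terms arise from $\sigma(x)\sigma(x)^T = a + \sum_j \alpha^j x_j$, and the jump contribution involves $\sum_i \bigl(\lambda_i + \sum_j \kappa_{i,j}x_j\bigr)\bigl(\int (e^{uz + z\gamma_i^T\psi}-1)\,\varphi_i(dz)\bigr)$.

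I would then differentiate the ODE system twice in $u$ at $u=0$. Since $\psi(t,0)\equiv 0$ and $\phi(t,0)\equiv 0$, the first derivative $\psi^{(1)}(t):=\partial_u\psi(t,0)$ satisfies an \emph{affine linear} ODE
\begin{equation*}
\dot\psi^{(1)}(t) = -\Bigl(\beta-\sum_{i=1}^{n}\mathbb{E}[Z_i]\gamma_i\kappa_i^T\Bigr)^{\!T}\psi^{(1)}(t) + \sum_{i=1}^{n}\mathbb{E}[Z_i]\kappa_i, \qquad \psi^{(1)}(0)=0.
\end{equation*}
Assumption~\ref{assump1}(III) makes the coefficient matrix positive stable, so $\psi^{(1)}(t)$ converges exponentially fast to a stationary vector whose transpose is precisely $\mathcal{A}^T$. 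Feeding this into the companion linear ODE for $\phi^{(1)}(t):=\partial_u\phi(t,0)$ yields $\phi^{(1)}(t)/t\to r$. Repeating at second order, the ODEs for $\psi^{(2)}$ and $\phi^{(2)}$ are again linear (with forcing built from $\psi^{(1)}(\infty)=\mathcal{A}$, the matrices $\alpha^j$, and the second moments $\mathbb{E}[Z_i^2]$), and solving them yields $\phi^{(2)}(t)/t\to\sigma^2$, with the stationary mean $\mathcal{B}$ entering through the time-average of the driving term and producing the quoted algebraic expression for $\sigma^2$ after inversion of $\beta-\sum_i\mathbb{E}[Z_i]\gamma_i\kappa_i^T$.

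Finally, setting $u=i\xi/\sqrt{t}$ and collecting terms,
\begin{equation*}
\log\mathbb{E}\bigl[e^{i\xi(V(t)-rt)/\sqrt{t}}\bigr] = -i\xi r\sqrt{t} + \phi(t,i\xi/\sqrt{t}) + \psi(t,i\xi/\sqrt{t})^T x_0;
\end{equation*}
the $-i\xi r\sqrt{t}$ term cancels the leading piece $(i\xi/\sqrt{t})\phi^{(1)}(t)\approx i\xi r\sqrt{t}$, the $\psi$ contribution is $O(1/\sqrt{t})$ since $x_0$ is fixed, and the $O(u^2)$ term of $\phi$ tends to $-\tfrac12\sigma^2\xi^2$. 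L\'evy's continuity theorem then delivers the Gaussian limit. The main obstacle I anticipate is showing that the Riccati solution $(\phi,\psi)(t,\cdot)$ exists on a complex neighborhood of $0$ that is \emph{uniform in} $t$, so the second--order Taylor expansion is valid at $u=i\xi/\sqrt{t}$ and the remainder is genuinely $o(1)$; the positive--stability condition is what prevents blow-up of the linear pieces in $t$, while the moment hypothesis $\mathbb{E}[Z_i^{2+\epsilon}]<\infty$ provides the $C^{2+\epsilon}$ regularity of the jump integrals in $u$ needed for a uniform $o(u^2)$ remainder bound.
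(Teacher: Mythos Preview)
The paper does not prove this theorem; it is quoted verbatim as Theorem~1 of Zhang et al.\ \cite{Zhang} and serves only as background for the refinements in Section~\ref{sec:main}. Your direct moment-generating-function approach via the Riccati ODEs and L\'evy's continuity theorem is the standard route and is essentially what the original reference \cite{Zhang} does.

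Where the present paper does make contact with this statement is in its proof of the \emph{refined} CLT, Theorem~\ref{CLTThm}. There the argument is genuinely different from yours: rather than analyzing the time-dependent Riccati solutions $(\phi(t,u),\psi(t,u))$ and controlling a Taylor remainder uniformly in $t$, the paper first establishes mod-$\phi$ convergence of $(V(t))_{t>0}$ (Theorem~\ref{thm:app-mod}) and then invokes the off-the-shelf CLT from the mod-$\phi$ framework \cite{modphi}. The only overlap with your computation is the verification that $\eta'(0)=r$ and $\eta''(0)=\sigma^2$, which the paper carries out by differentiating the \emph{stationary} implicit equation \eqref{eq:u-ast} for $u^*(\theta)$ at $\theta=0$; your limit $\psi^{(1)}(\infty)$ is exactly $(u^*)'(0)=\mathcal{A}$, so the algebra coincides. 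What each approach buys: yours is self-contained and needs only the $2+\epsilon$ moment assumption, but requires the uniform-in-$t$ regularity argument you flag as the main obstacle; the paper's route avoids that obstacle entirely by packaging the large-$t$ behavior into the mod-$\phi$ limit, at the cost of heavier machinery and a stronger hypothesis (finite exponential moments on the $Z_i$, inherited from Theorem~\ref{thm:zhang2}).
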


The second result is a large deviation principle for $V(t)$ as $t \rightarrow \infty$.

\begin{theorem}[Theorem 2 in \cite{Zhang}]\label{thm:zhang2}
 Assume $R>r$ and $\sup \{ \theta \in \mathbb{R}: \mathbb{E}[e^{\theta Z_i}]< \infty \} >0$ for each $i=1, \ldots, n$.
Under Assumption~\ref{assump1},
we have
\begin{equation*}
\lim_{t\rightarrow\infty}\frac{1}{t}\log\mathbb{P}(V(t)\geq Rt)=-\mathcal{I}(R),
\end{equation*}
where $\mathcal{I}(R)=\sup_{\theta\in\mathbb{R}}\{\theta R-\eta(\theta)\}$,
and
\begin{equation} \label{eq:eta-phi}
\eta(\theta) =u^{\ast}(\theta)^{T}b
+\sum_{i=1}^{d}\lambda_{i}\left(\mathbb{E}\left[e^{(\theta+u^{\ast}(\theta)^{T}\gamma_{i})Z_i}\right]-1\right),
\end{equation}
where
$u^{\ast}(\theta):\mathbb{R}\rightarrow\mathbb{R}^{n}$ is the implicit function defined
as the unique solution branch with $u^{\ast}(0)=0$ of the system of nonlinear equations:
\begin{equation} \label{eq:u-ast}
\sum_{i=1}^{d}u_{i}\beta_{i,j}-\frac{1}{2}u^{T}\alpha^{j}u-\sum_{i=1}^{n}
\left(\mathbb{E}\left[e^{(\theta+u^{T}\gamma_{i})Z_i}\right]-1\right)\kappa_{i,j}=0,
\qquad
j=1,2,\ldots,d.
\end{equation}
\end{theorem}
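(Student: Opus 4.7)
The plan is to apply the G\"{a}rtner-Ellis theorem to $\{V(t)/t\}_{t>0}$. This reduces the task to computing the scaled cumulant generating function
$$\Lambda(\theta) := \lim_{t\to\infty} \frac{1}{t}\log \mathbb{E}\big[e^{\theta V(t)}\big]$$
on an open interval $\Theta \ni 0$ whose image under $\Lambda'$ contains $R$, and verifying that $\Lambda$ is essentially smooth there. Once $\Lambda(\theta) = \eta(\theta)$ is identified on $\Theta$, G\"{a}rtner-Ellis produces the stated rate $\mathcal{I}(R) = \sup_\theta\{\theta R - \eta(\theta)\}$.

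To compute $\Lambda$, I would exploit the affine structure: appending $V$ as an additional coordinate preserves affineness, so by the extended affine transform formula there exist deterministic functions $\Phi(\cdot,\theta):[0,\infty)\to\mathbb{R}$ and $\Psi(\cdot,\theta):[0,\infty)\to\mathbb{R}^d$ with $\Phi(0,\theta)=0$ and $\Psi(0,\theta)=0$, such that
$$\mathbb{E}\big[e^{\theta V(t)} \,\big|\, X(0)=x_0\big] = \exp\big(\Phi(t,\theta) + \Psi(t,\theta)^T x_0\big).$$
Here $\Psi$ solves a generalized Riccati system whose equilibria are exactly the solutions of \eqref{eq:u-ast}, and $\Phi$ satisfies
$$\frac{d\Phi}{dt} = \Psi(t,\theta)^T b + \sum_{i=1}^n \lambda_i\Big(\mathbb{E}\big[e^{(\theta+\Psi(t,\theta)^T\gamma_i)Z_i}\big] - 1\Big).$$

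The core analytic step is then to show that $\Psi(t,\theta) \to u^*(\theta)$ as $t \to \infty$ for every $\theta$ in an open interval $\Theta \ni 0$ that reaches past the Legendre optimizer. At $\theta=0$ the selected equilibrium is $u^*(0)=0$; Assumption~\ref{assump1}(III) (positive stability of $\beta - \sum_i \mathbb{E}[Z_i]\gamma_i\kappa_i^T$) implies that the linearization of the Riccati drift at this equilibrium is Hurwitz, giving local asymptotic stability. The implicit function theorem applied to \eqref{eq:u-ast}, together with the moment condition $\sup\{\theta:\mathbb{E}[e^{\theta Z_i}]<\infty\}>0$, lets me continue the branch $u^*(\theta)$ and transfer stability to a $\theta$-neighborhood of $0$; a continuity and non-explosion argument then extends $\Theta$ up to a natural boundary. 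Once the convergence of $\Psi$ is in hand, integrating the $\Phi$-ODE and dividing by $t$ gives
$$\lim_{t\to\infty}\frac{\Phi(t,\theta)}{t} = u^*(\theta)^T b + \sum_{i=1}^n \lambda_i\Big(\mathbb{E}\big[e^{(\theta+u^*(\theta)^T\gamma_i)Z_i}\big] - 1\Big) = \eta(\theta),$$
while the initial-condition contribution $\Psi(t,\theta)^T x_0/t$ tends to zero. Since $\eta'(0)$ equals the ergodic mean $r$ identified in Theorem~\ref{thm:zhang1} and $\eta$ is smooth and strictly convex, any $R>r$ admits an interior optimizer $\theta^*>0$ in $\Theta$, and G\"{a}rtner-Ellis yields the claim.

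The main obstacle is the global-in-$t$ convergence $\Psi(t,\theta) \to u^*(\theta)$ on a $\theta$-interval reaching past $\theta^*$. The Riccati system is nonlinear and may admit multiple equilibria or blow up in finite time as $\theta$ grows; carefully selecting the branch $u^*$ singled out by $u^*(0)=0$, ruling out explosion of the trajectory, and verifying that its basin of attraction contains the origin all rely crucially on the sign structure in Assumption~\ref{assump1}(I), the strict positivity in (II), and the positive-stability condition in (III).
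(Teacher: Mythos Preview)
This theorem is not proved in the present paper: it is quoted as Theorem~2 of \cite{Zhang} and serves only as background. The paper explicitly remarks in the introduction that ``G\"{a}rtner-Ellis theorem \ldots is used in \cite{Zhang} to obtain the large deviation principle for affine point processes,'' and your proposal follows exactly that route---affine transform formula for $\mathbb{E}[e^{\theta V(t)}]$, Riccati ODE analysis showing $\Psi(t,\theta)\to u^*(\theta)$, identification of $\Lambda(\theta)=\eta(\theta)$, and then G\"{a}rtner-Ellis. So your approach is correct and coincides with the one the paper attributes to the original source.
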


{Note that there exists some $\theta_{c}>0$ so that $u^{\ast}(\theta)$ and $\eta(\theta)$ in Theorem \ref{thm:zhang2} above are well-defined
for any $\theta\leq\theta_{c}$. For the definition of $\theta_{c}$ and related discussions,
we refer to Section 4.3 in \cite{Zhang}.}

Before we present our results, we remark that \cite{Zhang} also considered the limit theorems for weighted combinations of $L_i(t)$ and the left tail of $V(t)$ as $t \rightarrow \infty$.
For the simplicity of the presentation, in this paper we restrict our discussions to the refinement of the above two results (Theorems~\ref{thm:zhang1} and \ref{thm:zhang2}) only, although similar refinements
can also be obtained for weighted combinations of $L_{i}(t)$ and the left tail of $V(t)$.


\section{Main results: refined large-time asymptotics} \label{sec:main}
In this section, we present our main results on refinements of large--time asymptotics of affine point processes.
Recall that for two real-valued functions $f_1, f_2$, we write $f_1=O(f_2)$ as $t \rightarrow \infty$ if there are constants $c_1$ and $c_2 > 0$ such that $|f_1(t)| \le c_1 |f_2(t)|$ whenever $t>c_2$.
We write $f_1=o(f_2)$ as $t \rightarrow \infty$ if $f_1(t)/f_2(t) \rightarrow 0$ as $t\rightarrow\infty$.

Our first result is a precise large deviation result for $V(t)$ when $t$ goes to infinity, which extends Theorem~\ref{thm:zhang2} (Theorem 2 in \cite{Zhang}). Recall the quantities $r, \mathcal{I}(R), \eta$ given in Theorem~\ref{thm:zhang2}. 


\begin{theorem}\label{thm:preciseLDP}

Let $R> r$ and $h \in \mathbb{R}$ defined by $\eta'(h)=R$. Under the same assumptions as in Theorem~\ref{thm:zhang2}, we have the following.

\begin{itemize}


\item [(1)] If the random variables
$\sum_{i=1}^{n}\sum_{j=1}^{n_{i}}Z_{ij}$, where $n_{i}\in\mathbb{N}\cup\{0\}$
and $Z_{ij}$ are i.i.d. distributed as $Z_{i}$,
are supported on $\mathbf{b}\mathbb{N}\cup\{0\}$ for some parameter $\mathbf{b}>0$,
then there exist constants $(c_k)_{k=0}^{\infty}$ such that
for $Rt\in \mathbf{b} \mathbb{N}$, as $t \rightarrow \infty,$
\begin{equation} \label{eq:LtRt}
\mathbb{P}(V(t) \geq R t)
=\frac{e^{-t \mathcal{I}(R) }}{\sqrt{2\pi t \eta''(h)}} \cdot \left(c_0 + \sum_{k=1}^{v}\frac{c_k}{t^k}
+ O\left(\frac{1}{t^{v}}\right) \right),
\end{equation}
where $v$ is any positive integer.
More generally, assume $g:\mathbb{R}_{+} \rightarrow\mathbb{R}$
satisfies $|g(x)|\leq \bar ae^{\bar h x}$ for some $\bar h<h$ and $\bar a>0$,
then there exist constants $(\hat c_k)_{k=0}^{\infty}$ such that
for any $Rt\in \mathbf{b} \mathbb{N}$, as $t \rightarrow \infty,$ we have
\begin{equation} \label{eq:ES}
\mathbb{E}\left[g(V(t)- R t) \cdot 1_{V(t) \geq R t} \right]
=  \frac{e^{-t \mathcal{I}(R) }}{\sqrt{2\pi t \eta''(h)}} \cdot \left( {\hat c_0 + \sum_{k=1}^{v}\frac{\hat c_k}{t^k}  + O\left(\frac{1}{t^{v+1}} \right)  } \right).
\end{equation}

\item [(2)] Otherwise, there exist constants $(d_k)_{k=0}^{\infty}$ such that as $t \rightarrow \infty,$
\begin{equation} \label{eq:LtRt2}
\mathbb{P}(V(t) \geq R t)
=\frac{e^{-t \mathcal{I}(R) }}{\sqrt{2\pi t \eta''(h)}} \cdot \left(d_0 + \sum_{k=1}^{v}\frac{d_k}{t^k}
+ o \left(\frac{1}{t^{v}}\right) \right),
\end{equation}
where $v$ is any positive integer.
{More generally, assume that $g:\mathbb{R}_{+}\rightarrow\mathbb{R}$
admits the expansion $g(x)=\sum_{k=0}^{\infty}g_{k}x^{k+\Delta}$,
where $\Delta\in[0,1)$
and there exist some $\bar{a}>0$ and $0<\bar{h}<h$, such that $g_{k}\leq\bar{a}\frac{\bar{h}^{k}}{k!}$
for every $k\in\mathbb{N}$,}
then there exist constants $(\hat d_k)_{k=0}^{\infty}$ such that as $t \rightarrow \infty,$ 
\begin{equation} \label{eq:ES2}
\mathbb{E}\left[g(V(t)- R t) \cdot 1_{V(t) \geq R t} \right]
=\frac{e^{-t \mathcal{I}(R) }}{\sqrt{2\pi t \eta''(h)}} \cdot \left( {\hat{d}_{0} + \sum_{k=1}^{v}\frac{\hat d_k}{t^k}+o \left(\frac{1}{t^{v}}\right)} \right).
\end{equation}

\end{itemize}
\end{theorem}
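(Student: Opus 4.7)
The plan is to combine two ingredients. The first is the mod-$\phi$ convergence of $V(t)$ (Theorem~\ref{thm:app-mod}): namely, that there exists an infinitely divisible law $\phi$ with Laplace exponent equal to the function $\eta$ of Theorem~\ref{thm:zhang2}, together with a continuous limit function $\psi$, such that
\[
e^{-t\eta(\theta)}\,\mathbb{E}\!\left[e^{\theta V(t)}\right]\;\longrightarrow\;\psi(\theta)
\]
at an exponential rate, uniformly on complex neighborhoods of the tilting point $h$ defined by $\eta'(h)=R$. The second ingredient is the pair of general mod-$\phi$ expansion results (Propositions~\ref{prop:modphi-lattice} and \ref{prop:modphi-nonlattice-gz}), which convert mod-$\phi$ convergence into asymptotic expansions for deviation probabilities and tail expectations. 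With these two in hand, Theorem~\ref{thm:preciseLDP} should follow by applying the propositions to $X_t := V(t)$ at the tilting parameter $h$, noting the standard identity $\mathcal{I}(R)=hR-\eta(h)$.

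For the mod-$\phi$ convergence itself, I would exploit the affine tractability: the MGF of $V(t)$ admits an exponential-affine representation $\mathbb{E}[e^{\theta V(t)}]=\exp(A(t,\theta)+B(t,\theta)^{T}x_{0})$, where $(A,B)$ solves the ODE system associated to the affine jump-diffusion, with $B$ satisfying a Riccati-type equation whose stationary point is $u^{\ast}(\theta)$ from \eqref{eq:u-ast}. Following the measure-change argument in \cite{Zhang}, I would tilt by $e^{\theta V(t)}/\mathbb{E}[e^{\theta V(t)}]$, which keeps the process in the affine class with modified parameters. Assumption~\ref{assump1}(III) ensures stability at $\theta=0$; by continuity and analytic continuation the Jacobian of the linearization of the $B$-equation around $u^{\ast}(\theta)$ remains Hurwitz in a complex neighborhood of $h$. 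Standard ODE asymptotics then yield $B(t,\theta)=u^{\ast}(\theta)+O(e^{-ct})$ and $A(t,\theta)=t\eta(\theta)+\log\psi(\theta)+O(e^{-ct})$ for some $c>0$, uniformly on compact complex neighborhoods, which is exactly mod-$\phi$ convergence at exponential speed.

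With mod-$\phi$ convergence in hand, the dichotomy in the statement corresponds to whether $\phi$ is lattice or non-lattice. The L\'evy measure of $\phi$ is built from the jump distributions $\varphi_{i}$ weighted by the tilted intensities, so the condition in case (1) that sums of i.i.d.\ copies of the $Z_{i}$'s lie in $\mathbf{b}\mathbb{N}\cup\{0\}$ is equivalent to $\phi$ being $\mathbf{b}$-lattice. In that case I apply Proposition~\ref{prop:modphi-lattice} to obtain \eqref{eq:LtRt} and its tail-expectation analogue \eqref{eq:ES} directly; the hypothesis $|g(x)|\le \bar a e^{\bar h x}$ with $\bar h<h$ ensures enough integrability against the tilted measure to feed into the discrete Fourier inversion at each order. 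Otherwise $\phi$ is non-lattice and Proposition~\ref{prop:modphi-nonlattice-gz} yields \eqref{eq:LtRt2}; the structural assumption on $g$ in case (2), namely $g(x)=\sum_{k}g_{k}x^{k+\Delta}$ with geometrically controlled $g_{k}$'s, is exactly what is needed so that Esseen's smoothing inequality, Laplace's method, and Fa\`{a} di Bruno's formula (as sketched in the introduction) apply to each term of the series and produce the coefficients $\hat d_k$ in \eqref{eq:ES2}.

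The main obstacle is the first step: establishing mod-$\phi$ convergence with exponential speed uniformly in a \emph{complex} strip containing $h$. Real-axis exponential decay of $B(t,\theta)-u^{\ast}(\theta)$ is insufficient, because the mod-$\phi$ propositions demand control of the characteristic function on the entire vertical line $\{\mathrm{Re}\,\theta=h\}$ in order to invert the Fourier transform. This forces a careful study of the Riccati ODE for $B$ along that line, and in the non-lattice case one must additionally extract enough decay of the tilted characteristic function as $|\mathrm{Im}\,\theta|\to\infty$ (via the Cram\'er-type condition implicit in the finiteness of $\mathbb{E}[e^{\theta Z_i}]$) to justify the higher-order expansion in \eqref{eq:LtRt2}, which is precisely the additional regularity that the extended Proposition~\ref{prop:modphi-nonlattice-gz} requires.
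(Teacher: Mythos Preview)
Your strategy is exactly the paper's strategy: establish mod-$\phi$ convergence of $V(t)$ at exponential speed (this is Theorem~\ref{thm:app-mod}), then feed that into Theorem~\ref{thm:3}/Proposition~\ref{prop:modphi-lattice} in the lattice case and into Proposition~\ref{prop:modphi-nonlattice-gz} in the non-lattice case. Your sketch of the mod-$\phi$ argument via the affine transform, the measure change of \cite{Zhang}, and the stability of the linearized Riccati flow near $u^{\ast}(\theta)$ is likewise what the paper does in Section~\ref{thm:modphi-app}.

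Two points where you diverge from the paper's treatment are worth noting. First, the mod-$\phi$ framework is stated for a discrete sequence $(X_n)_{n\in\mathbb{N}}$, while $V(t)$ is indexed by continuous $t$. The paper handles this explicitly: in the lattice case, since one needs $Rt\in\mathbf{b}\mathbb{N}$ anyway, one sets $X_k=V(k/R)$ and $t_k=k/R$ and applies the propositions directly; in the non-lattice case one first obtains the expansion along any arithmetic progression $t=b^{\ast}k$ and then invokes a result of Harrington~\cite{Harrington1978} to pass from convergence along lattices to convergence for arbitrary $t\to\infty$. You glossed over this reduction.

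Second, your stated ``main obstacle'' overstates the difficulty. Mod-$\phi$ convergence is, by Definition~\ref{def:mod-phi}, only required locally uniformly on compact subsets of the strip; you do not need to control $\psi_n(h+iy)$ as $|y|\to\infty$, nor do you need a Cram\'er-type condition on the $Z_i$. In the Esseen-inequality step inside the proof of Proposition~\ref{prop:modphi-nonlattice-gz}, the large-$|\zeta|$ contributions are killed not by decay of $\psi_n$ but by the factor $|e^{\tilde\eta(i\zeta)}|^{t_n}$, which is bounded strictly below $1$ away from $\zeta=0$ precisely because $\phi$ is non-lattice (Lemma~4.9 in \cite{modphi}). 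So the genuine work for Theorem~\ref{thm:app-mod} is purely local: exponential convergence of the Riccati solution to its equilibrium $u^{\ast}(\theta)$, uniformly on compact complex neighborhoods of $h$, which is what the positive-stability of $\beta^{\ast}-\sum_i\mathbb{E}[Z_i^{\ast}]\gamma_i(\kappa_i^{\ast})^{T}$ under $Q_\theta^{\ast}$ delivers.
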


The coefficients $(c_k, \hat c_k$, $d_k, \hat d_k)_{k=0}^{\infty}$ in the above result have explicit formulas (see Propositions~\ref{prop:modphi-lattice} and \ref{prop:modphi-nonlattice-gz}, and Appendix~\ref{sec:affine-coeff}), and can be numerically computed.
Hence, based on Theorem~\ref{thm:preciseLDP},
we can develop approximations for large deviation tail probabilities and tail expectations for $V(t)$
by truncations of the asymptotic expansions.
Numerical results on the quality of approximations will be illustrated in Section~\ref{sec:numerical}.

Our next result is a refined central limit theorem (CLT) for $V(t)$, which improves Theorem~\ref{thm:zhang1} (i.e. Theorem 1 in \cite{Zhang})
{under a stronger assumption}.

\begin{theorem}[Refined CLT]\label{CLTThm}
Let $y=o(t^{1/6})$. Assume the random variable $V(t)$ is either lattice distributed or it has a non-lattice law that is absolutely continuous with respect to Lebesgue measure. Under the same assumptions as in Theorem~\ref{thm:zhang2},
we have as $t \rightarrow \infty$,
\begin{equation*}
\mathbb{P}\left(V(t)\geq rt+\sigma y\sqrt{t}\right)
=\int_{y}^{\infty}\frac{e^{-\frac{u^{2}}{2}}}{\sqrt{2\pi}}du
\cdot(1+o(1)),
\end{equation*}
where $r$ and $\sigma$ are given in Theorem~\ref{thm:zhang1}.
\end{theorem}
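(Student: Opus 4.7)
The plan is to derive the refined CLT as a consequence of two ingredients established earlier in the paper: the mod-$\phi$ convergence of $(V(t))_{t>0}$ at exponential speed (Theorem~\ref{thm:app-mod}) and the general refined CLT for mod-$\phi$ convergent sequences developed in Section~\ref{sec:extended-modphi} (building on \cite{modphi}).

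First I would invoke Theorem~\ref{thm:app-mod} to write, for $\xi$ in a complex neighborhood of $0$,
\[
\mathbb{E}\bigl[e^{\xi V(t)}\bigr]=e^{t\eta(\xi)}\,\psi(\xi)\,\bigl(1+O(e^{-\alpha t})\bigr)
\]
for some $\alpha>0$, where $\eta$ is the Laplace exponent of the limiting infinitely divisible law $\phi$ and $\psi$ is the mod-$\phi$ limiting function; in particular $\eta'(0)=r$ and $\eta''(0)=\sigma^2$, consistently with Theorem~\ref{thm:zhang1}.

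Second, I would apply the refined CLT for mod-$\phi$ convergent sequences to $(V(t))_{t>0}$. The standard derivation proceeds by an Esscher tilt at a small parameter $h_t\sim y/(\sigma\sqrt{t})$, chosen so that the tilted mean of $V(t)$ matches the target $rt+\sigma y\sqrt{t}$; mod-$\phi$ convergence ensures that, once standardized, the tilted law is asymptotically $\mathcal{N}(0,1)$. Fourier inversion --- justified by direct summation in the lattice case, and by Esseen's smoothing inequality (the absolute-continuity hypothesis supplying the needed decay of the characteristic function off a neighborhood of $0$) in the absolutely continuous case --- then yields
\[
\mathbb{P}\bigl(V(t)\geq rt+\sigma y\sqrt{t}\bigr)=\bar{\Phi}(y)\,\bigl(1+o(1)\bigr),
\]
where $\bar{\Phi}(y):=\int_{y}^{\infty}e^{-u^{2}/2}/\sqrt{2\pi}\,du$.

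The threshold $y=o(t^{1/6})$ arises from the Taylor expansion of $\eta$ around $0$: the leading residual term is the cubic $\tfrac{1}{6}\eta'''(0)\,h_t^3\,t$, which is of order $y^{3}/\sqrt{t}$, and this must vanish in the limit. The main technical obstacle, and the reason for the dichotomy hypothesis (lattice, or absolutely continuous non-lattice law), is to control $|\mathbb{E}[e^{i\xi V(t)}]|$ for $\xi$ outside a shrinking neighborhood of $0$: the lattice case requires only a bound on the finite interval $[-\pi/\mathbf{b},\pi/\mathbf{b}]$, whereas the absolutely continuous case exploits the decay of the characteristic function at infinity. These bounds make the Fourier-inversion error negligible relative to $\bar{\Phi}(y)$ and close the argument.
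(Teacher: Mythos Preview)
Your approach matches the paper's: invoke the mod-$\phi$ convergence of $(V(t))_{t>0}$ (Theorem~\ref{thm:app-mod}) and then apply the ready-made refined CLT for mod-$\phi$ convergent sequences---the paper simply cites Theorems~3.9 and~4.8 of \cite{modphi} rather than re-deriving the Esscher-tilt/Fourier-inversion argument you sketch. One point of emphasis worth noting: the identification $\eta'(0)=r$ and $\eta''(0)=\sigma^{2}$, which you record as a parenthetical, is in fact the bulk of the paper's proof, carried out by differentiating \eqref{eq:eta-phi} together with the implicit relation \eqref{eq:u-ast} at $\theta=0$ and matching against the explicit formulas for $r$ and $\sigma^{2}$ in Theorem~\ref{thm:zhang1}.
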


To prove Theorems~\ref{thm:preciseLDP} and \ref{CLTThm}, we use an approach that is based on the mod-$\phi$ convergence framework which we discuss in the next section. The stronger assumption needed in Theorem~\ref{CLTThm} is required in using this approach. On the other hand,
in \cite{Zhang}, one needs extra effort to prove the large deviation principle in addition to the central limit theorem, while the mod-$\phi$ convergence framework leads to a unified approach to establish both precise large deviations and refined CLT.
We give the details of the proofs of Theorems~\ref{thm:preciseLDP}  and \ref{CLTThm} in Section~\ref{sec:mod-app}.

\section{Mod-$\phi$ convergence framework and some new results}\label{sec:extended-modphi}

In this section, we briefly review the mod-$\phi$ convergence framework in \cite{modphi}, and present some related new results ({Propositions~\ref{prop:modphi-lattice} and \ref{prop:modphi-nonlattice-gz}}) for \textit{quite general sequence of random variables}.
These new results extend the results on precise estimates of large deviations probabilities in \cite{modphi}, and they are of independent interest.

We first recall the definition of mod-$\phi$ convergence, where one considers
a renormalization of the characteristic functions of random variables.
Let $(X_{n})_{n\in\mathbb{N}}$ be a sequence of real-valued random variables
and $\mathbb{E}[e^{zX_{n}}]$ exist in a strip $\mathbb{S}_{(c,d)}:=\{z\in\mathbb{C}: c<\mathcal{R}(z)<d\}$,
with $c<d$ being extended real numbers, (i.e. we allow $c=-\infty$ and $d=+\infty$)
and $\mathcal{R}(z)$ denotes the real part of $z\in\mathbb{C}$.

\begin{definition} [Definition 1.1 in \cite{modphi}]\label{def:mod-phi}
We say that $(X_{n})_{n\in\mathbb{N}}$ converges mod-$\phi$ on $\mathbb{S}_{(c,d)}$
with parameters $(t_{n})_{n\in\mathbb{N}}$ and the limiting function $\psi$, if
 there exists a non-constant infinitely divisible distribution $\phi$
with $\int_{\mathbb{R}}e^{zx}\phi(dx)=e^{\eta(z)}$, which is well defined
on $\mathbb{S}_{(c,d)}$, and an analytic function $\psi(z)$
that does not vanish on the real part of $\mathbb{S}_{(c,d)}$
such that as $n\rightarrow\infty$, we have $t_{n}\rightarrow+\infty$, and
\begin{equation}\label{eq:mod}
e^{-t_{n}\eta(z)} \cdot \mathbb{E}\left[e^{zX_{n}}\right]\rightarrow\psi(z),
\quad \text{locally uniformly in $z\in\mathbb{S}_{(c,d)}$}.
\end{equation}
 In addition, we say that $(X_{n})_{n\in\mathbb{N}}$ converges mod-$\phi$ at speed $O((t_n)^{-v})$ (respectively, exponentially fast) if the difference of the two sides of Equation~\eqref{eq:mod} is uniformly bounded by $C_K (t_n)^{-v}$ (respectively, $C_K e^{-t \bar C_K}$) for $z$ in a compact subset $K$ of $\mathbb{S}_{(c,d)}$, for some positive constants $C_K$ and $\bar C_K$.
\end{definition}
 An informal interpretation of the convergence in \eqref{eq:mod} is that $X_n$ can be represented as the sum of $t_n$ independent copies of the (non-constant) infinitely divisible distribution $\phi$ plus a perturbation encoded in the limiting function $\psi$.
Note in the above definition, we have slightly abused the notations for convenience and the meanings should be clear from context: we use the same notation $\eta$ in the general setting here and in Sections~\ref{sec:app} and \ref{sec:main} for affine point processes; we use $n$ to denote the index of random variables $X_n$ here and use $n$ to denote the dimension of affine point processes in Section~\ref{sec:app}.

The mod-$\phi$ convergence framework allows one to obtain precise estimates for tail probabilities and one needs to consider two separate cases: lattice distributed $\phi$, and non-lattice distributed $\phi$. We next review the results in \cite{modphi}
and present our new results for these two cases. To facilitate the presentation, we write
\begin{equation}\label{eqn:Sn}
\mathcal{S}_{n} := \left\{ (m_{1},\ldots,m_{n}):  1\cdot m_{1}+2\cdot m_{2}+\cdots+n\cdot m_{n}=n, \text{with each }m_{i}\in\mathbb{N}\cup\{0\}\right\}.
\end{equation}
This set $\mathcal{S}_{n}$ appears whenever
we apply Fa\`{a} di Bruno's formula (see Lemma~\ref{lem:faadi} in Appendix~\ref{sec:Faa}) for derivatives of composite functions for $n\geq 1$,
and whenever $\mathcal{S}_{0}$ appears, we simply consider the $0$-th order derivative.

\subsection{Lattice case}  
We first discuss the case where $\phi$ is lattice distributed.
For a given $x$, define $h$ (with a slight abuse of notations) and $F(x)$ by
\begin{equation}\label{eq:hF}
\eta'(h)=x, \quad \text{and} \quad F(x)=\sup_{\theta\in\mathbb{R}}\{\theta x-\eta(\theta)\}.
\end{equation}
The following result from \cite{modphi} yields an expansion for the tail probabilities when $\phi$ has a lattice distribution, which generalizes the Buhadur--Rao theorem for sums of i.i.d random variables in large deviations theory \cite{Bahadur1960, Dembo}. Without loss of generality, as in \cite{modphi}, we assume that $X_n$'s and the infinite divisible distribution $\phi$ both take values in $\mathbb{Z}$ and $\mathbb{Z}$ is the minimal lattice for $\phi$.

\begin{theorem}[Theorem 3.4. \cite{modphi}] \label{thm:3}
Suppose $\phi$ is lattice distributed, and $(X_{n})_{n\in\mathbb{N}}$ converges
mod-$\phi$ at speed $O((t_n)^{-v})$ on a band $\mathbb{S}_{(c,d)}$ ($c<0<d$) with parameters $(t_{n})_{n\in\mathbb{N}}$ and the limiting function $\psi$. Then for $x \in (\eta'(0), \eta'(d))$ and $t_{n}x\in\mathbb{N}$, we have as $n \rightarrow \infty,$
\begin{equation*} \label{eq:expansion}
\mathbb{P}(X_{n}\geq t_{n}x)
=\frac{e^{-t_{n}F(x)}}{\sqrt{2\pi t_{n}\eta''(h)}} \left(c_0 + \frac{c_1}{t_n} + \ldots + \frac{c_{v-1}}{(t_n)^{v-1}} + O\left(\frac{1}{(t_n)^v}\right) \right),
\end{equation*}
where $c_0= \frac{\psi(h)}{1-e^{-h}}$ and $(c_{k})_{k=1}^{v-1}$ can be computed as follows:
\begin{align*}
c_{k}
&=\sum_{m+\ell+n=2k}
\sum_{q=0}^{\infty}e^{-qh}\frac{(-q)^{m}}{m!}
\cdot
\frac{\psi^{(\ell)}(h)}{\ell!}
\\
&\qquad
\cdot
\sum_{\mathcal{S}_{n}}\frac{(-1)^{m_{1}+\cdots+m_{n}}}{m_{1}!1!^{m_{1}}m_{2}!2!^{m_{2}}\cdots m_{n}!n!^{m_{n}}}
\\
&\qquad
\cdot
\prod_{j=1}^{n}\left(\frac{1}{\eta''(h)}\frac{\eta^{(j+2)}(h)}{(j+2)(j+1)}\right)^{m_{j}}
\frac{(-1)^{k}(2(k+m_{1}+\cdots+m_{n})-1)!!}{(\eta''(h))^{k}},
\end{align*}
where $\mathcal{S}_{n}$ is defined in \eqref{eqn:Sn}.
\end{theorem}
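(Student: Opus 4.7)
The plan is to combine Fourier inversion with the Esscher (exponential) tilt at parameter $h$ and then do a Laplace/saddle--point expansion at the origin, using mod-$\phi$ convergence to replace $\mathbb{E}[e^{zX_n}]$ by $e^{t_n\eta(z)}\psi(z)$ with an $O(t_n^{-v})$ control on compacta.

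First I would write, for $m\in t_n x+\mathbb{N}$,
\[
\mathbb{P}(X_n=m)=\frac{1}{2\pi}\int_{-\pi}^{\pi}e^{-im\xi}\mathbb{E}\!\left[e^{i\xi X_n}\right]d\xi
=\frac{e^{-hm}}{2\pi}\int_{-\pi}^{\pi}e^{-im\xi}\mathbb{E}\!\left[e^{(h+i\xi)X_n}\right]d\xi,
\]
valid because $X_n$ and $\phi$ share the minimal lattice $\mathbb{Z}$. Summing the geometric series $\sum_{j\ge 0}e^{-(h+i\xi)j}=1/(1-e^{-(h+i\xi)})$ and using $\eta(h)-hx=-F(x)$, I get the exact representation
\[
\mathbb{P}(X_n\ge t_n x)=\frac{e^{-t_n F(x)}}{2\pi}\int_{-\pi}^{\pi}\frac{e^{-it_n x\xi}\,e^{t_n[\eta(h+i\xi)-\eta(h)]}}{1-e^{-(h+i\xi)}}\,\mathbb{E}\!\left[e^{(h+i\xi)X_n}\right]e^{-t_n\eta(h+i\xi)}\,d\xi.
\]
By the mod-$\phi$ hypothesis, the product $\mathbb{E}[e^{(h+i\xi)X_n}]e^{-t_n\eta(h+i\xi)}$ equals $\psi(h+i\xi)+O(t_n^{-v})$ uniformly on the compact set $\{h+i\xi:\xi\in[-\pi,\pi]\}$, so it suffices to analyse the integral with $\psi(h+i\xi)$ in place of that product.

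Next I would localize: pick $\delta>0$ small and split the integral into $|\xi|\le\delta$ and $\delta\le|\xi|\le\pi$. On the latter, the lattice non-degeneracy of $\phi$ ensures $\Re\bigl(\eta(h+i\xi)-\eta(h)\bigr)\le -c(\delta)<0$, so the contribution is $O(e^{-c(\delta)t_n})$ and absorbed into the remainder. On $|\xi|\le\delta$, I substitute $\xi=u/\sqrt{t_n\eta''(h)}$: the identity $\eta'(h)=x$ kills the linear term, so
\[
-it_n x\xi+t_n[\eta(h+i\xi)-\eta(h)]=-\tfrac{u^2}{2}+\sum_{j\ge 3}\frac{\eta^{(j)}(h)}{j!}\frac{(iu)^{j}}{(t_n\eta''(h))^{(j-2)/2}}\cdot t_n^{1-(j-2)/2-1}\!\!,
\]
which, after factoring the Gaussian, produces an expansion in powers of $t_n^{-1/2}$ inside $\exp(\cdot)$; Fa\`a di Bruno's formula (Lemma~\ref{lem:faadi}) converts this into a power series in $t_n^{-1/2}$ with the combinatorial coefficient $\prod_j\bigl(\frac{1}{\eta''(h)}\frac{\eta^{(j+2)}(h)}{(j+2)(j+1)}\bigr)^{m_j}/(m_1!1!^{m_1}\cdots m_n!n!^{m_n})$ appearing in the claim. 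Simultaneously I would Taylor-expand $\psi(h+i\xi)$ at $\xi=0$, giving the $\psi^{(\ell)}(h)/\ell!$ factor, and expand $1/(1-e^{-(h+i\xi)})=\sum_{q\ge 0}e^{-qh}e^{-iq\xi}$ and then $e^{-iq\xi}=\sum_{m}(-iq\xi)^{m}/m!$, giving the $\sum_q e^{-qh}(-q)^m/m!$ factor (the resulting inner sum over $q$ converges absolutely since $h>0$).

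Finally I would multiply out the three expansions, collect terms of homogeneous degree $2k$ in $u$, and evaluate the Gaussian moments via $\int_{\mathbb{R}}u^{2k}e^{-u^2/2}du=(2k-1)!!\sqrt{2\pi}$, odd powers integrating to zero (extending the integral from $|u|\le\delta\sqrt{t_n\eta''(h)}$ to all of $\mathbb{R}$ introduces only super-polynomially small errors). This matches the stated formula for $c_k$, with $k=0$ producing $c_0=\psi(h)/(1-e^{-h})$ and the truncation at order $2(v-1)$ in $u$ producing the claimed remainder $O(t_n^{-v})$ (the $v$--th order remainder from the Taylor expansions combined with the mod-$\phi$ remainder $O(t_n^{-v})$). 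The main obstacles I anticipate are: (i) the uniform bookkeeping of the three interacting expansions (mod-$\phi$ error, Laplace expansion of $\exp$, and Taylor expansion of $\psi\cdot(1-e^{-(h+i\xi)})^{-1}$) so that the remainder is genuinely $O(t_n^{-v})$; and (ii) verifying the combinatorial identification of the collected terms with the Fa\`a di Bruno formula in the statement, since several indices ($m,\ell,n$ with $m+\ell+n=2k$, plus the partition $\mathcal{S}_n$) must be matched simultaneously. The off--origin bound $\Re\,\eta(h+i\xi)<\eta(h)$ for $0<|\xi|\le\pi$ is essentially automatic from the minimality of the lattice, so is not a real difficulty.
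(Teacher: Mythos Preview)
Your proposal is correct and follows essentially the same route as the paper (which cites \cite{modphi} for this result and reproduces the method in its proof of Proposition~\ref{prop:modphi-lattice}): Esscher tilt at $h$, Fourier inversion on the lattice, localization to $|\xi|\le\delta$ using $\Re\,\eta(h+i\xi)<\eta(h)$ off the origin, the substitution $\xi=w/\sqrt{t_n\eta''(h)}$, and the three-factor expansion of $1/(1-e^{-(h+i\xi)})$, $\psi(h+i\xi)$, and the cubic-and-higher remainder of $\eta$ via Fa\`a di Bruno, followed by Gaussian-moment integration. The only cosmetic difference is that the paper packages the non-Gaussian part of the integrand as a single function $H_n(w)$ (your $g\equiv 1$ case) and reads off the $\beta_{2k}(w)$ before integrating, rather than expanding the three factors separately first; the combinatorics and error control are identical.
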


We extend the above theorem to obtain precise estimates of tail expectations. The proof of the following result is given in Appendix~\ref{appendixA}.

\begin{proposition}\label{prop:modphi-lattice}
Suppose the assumptions in Theorem~\ref{thm:3} hold.
In addition, assume $g:\mathbb{R}_{+} \rightarrow\mathbb{R}$
satisfies $|g(x)|\leq \bar a e^{\bar h x}$ for some $\bar h <h$ and $\bar a>0$.
Then as $n \rightarrow \infty,$ we have
\begin{equation*}
\mathbb{E}[g(X_{n}-t_{n}x)1_{X_{n}\geq t_{n}x}]
=\frac{e^{-t_{n}F(x)}}{\sqrt{2\pi t_{n}\eta''(h)}}   \left(\hat c_0 + \frac{\hat c_1}{t_n} + \ldots + \frac{\hat c_{v-1}}{t_n^{v-1}} + O\left(\frac{1}{t_n^v}\right) \right),
\end{equation*}
where $h$ and $F$ are defined in \eqref{eq:hF}. Here, $\hat c_0= \sum_{k=0}^{\infty}g(k)e^{-kh} \cdot \psi(h)$, and
$(\hat{c}_{k})_{k=1}^{v-1}$ can be computed using $g$ and the higher order
derivatives of $\eta$ and $\psi$ at $h$:
\begin{align} \label{eq:hatck}
\hat{c}_{k}
&=\sum_{m+\ell+n=2k}
\sum_{q=0}^{\infty}g(q)e^{-qh}\frac{(-q)^{m}}{m!}
\cdot
\frac{\psi^{(\ell)}(h)}{\ell!}
\nonumber \\
&\qquad
\cdot
\sum_{\mathcal{S}_{n}}\frac{(-1)^{m_{1}+\cdots+m_{n}}}{m_{1}!1!^{m_{1}}m_{2}!2!^{m_{2}}\cdots m_{n}!n!^{m_{n}}}
\nonumber \\
&\qquad
\cdot
\prod_{j=1}^{n}\left(\frac{1}{\eta''(h)}\frac{\eta^{(j+2)}(h)}{(j+2)(j+1)}\right)^{m_{j}}
\frac{(-1)^{k}(2(k+m_{1}+\cdots+m_{n})-1)!!}{(\eta''(h))^{k}},
\end{align}
where $\mathcal{S}_{n}$ is defined in \eqref{eqn:Sn}.
\end{proposition}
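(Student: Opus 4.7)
The plan is to follow the Esscher transform plus Laplace method strategy used in \cite{modphi} to establish Theorem~\ref{thm:3}, inserting the function $g$ as a weight in the final summation step. The underlying observation is that a tail expectation $\mathbb{E}[g(X_n-t_nx)1_{X_n\geq t_nx}]$ is simply a weighted sum $\sum_{q\geq 0} g(q)\,\mathbb{P}(X_n = t_nx+q)$ of lattice-point probabilities, so once one has an expansion for $\mathbb{P}(X_n = t_nx+q)$ whose error is controlled uniformly in $q$ in the relevant range, summation against $g(q)\,e^{-hq}$ yields the claimed result.

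First I would perform the exponential change of measure at parameter $h$, writing
\[
\mathbb{E}\bigl[g(X_n - t_n x)\,1_{X_n\geq t_n x}\bigr]
= e^{-t_n F(x)}\sum_{q=0}^{\infty} g(q)\,e^{-hq}\,\widetilde{\mathbb{P}}\bigl(X_n = t_n x + q\bigr),
\]
where $F(x) = hx - \eta(h)$ and $\widetilde{\mathbb{P}}$ is the Esscher-tilted law. The mod-$\phi$ convergence is preserved under tilting, and since $\eta'(h) = x$ the tilted law centers $X_n$ near $t_n x$. Fourier inversion then gives
\[
\widetilde{\mathbb{P}}(X_n = t_n x + q) = \frac{1}{2\pi}\int_{-\pi}^{\pi} e^{-i\theta(t_n x + q)}\,\widetilde{\mathbb{E}}\bigl[e^{i\theta X_n}\bigr]\,d\theta,
\]
and mod-$\phi$ at speed $O(t_n^{-v})$ lets me replace $\widetilde{\mathbb{E}}[e^{i\theta X_n}]$ by $e^{t_n(\eta(h+i\theta)-\eta(h)-ix\theta)}\cdot \psi(h+i\theta)/\psi(h)$ with controlled error.

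Second, I would extract an expansion of $\widetilde{\mathbb{P}}(X_n = t_n x + q)$ in powers of $1/t_n$ by Laplace's method. After rescaling $\theta \mapsto \theta/\sqrt{t_n \eta''(h)}$, Taylor expanding the cumulant difference $\eta(h+i\theta)-\eta(h)-ix\theta + \tfrac12 \eta''(h)\theta^2$ and $\psi(h+i\theta)/\psi(h)$ around $\theta = 0$, and collecting terms via Fa\`{a} di Bruno's formula (Appendix~\ref{sec:Faa}), one obtains
\[
\widetilde{\mathbb{P}}(X_n = t_n x + q) = \frac{1}{\sqrt{2\pi t_n \eta''(h)}}\Bigl[A_0(q) + \tfrac{A_1(q)}{t_n} + \cdots + \tfrac{A_{v-1}(q)}{t_n^{v-1}} + O(t_n^{-v})\Bigr],
\]
where each coefficient $A_k(q)$ is a polynomial in $q$ whose structure matches the sum over $\mathcal{S}_n$ and indices $(m,\ell,n)$ with $m+\ell+n = 2k$ in \eqref{eq:hatck}: the factor $(-q)^m/m!$ comes from Taylor-expanding $e^{-i\theta q}$, the $\psi^{(\ell)}(h)/\ell!$ from expanding $\psi(h+i\theta)/\psi(h)$, and the product over $\mathcal{S}_n$ from Fa\`{a} di Bruno on the exponential of the higher-order cumulant terms. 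Plugging this into the tilted sum and rearranging gives $\hat c_k$ in the stated form, with leading coefficient $\hat c_0 = \psi(h)\sum_{q\geq 0} g(q)\,e^{-hq}$ reducing to $\psi(h)/(1-e^{-h}) = c_0$ of Theorem~\ref{thm:3} when $g \equiv 1$.

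The main obstacle will be controlling the Fourier-inversion error uniformly enough in $q$ to justify term-by-term summation against the exponentially growing weight. Concretely, each polynomial factor $A_k(q)$ has polynomial growth in $q$ and the $O(t_n^{-v})$ remainder carries at worst polynomial $q$-dependence, so the gap $h - \bar h > 0$ in the hypothesis $|g(q)| \leq \bar a\,e^{\bar h q}$ ensures both absolute convergence of $\sum_q g(q)\,e^{-hq} A_k(q)$ and summability of the error term. Establishing these bounds will require splitting the Fourier integral into a central region around $\theta = 0$, where Laplace's method with explicit Taylor remainders applies, and a tail region, where lattice minimality guarantees $|\widetilde{\mathbb{E}}[e^{i\theta X_n}]|$ is bounded away from $1$; Esseen-type smoothing inequalities (Appendix~\ref{sec:known}) then yield the needed exponential decay in $t_n$ while keeping the $q$-dependence of the remainder polynomial.
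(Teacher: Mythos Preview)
Your proposal is correct and uses the same core ingredients as the paper---Esscher tilting at $h$, Fourier inversion on the lattice, Taylor/Laplace expansion of the tilted characteristic function, and Fa\`{a} di Bruno for the exponential of the higher cumulants---but you order the operations differently. You expand each point probability $\widetilde{\mathbb{P}}(X_n=t_nx+q)$ individually and then sum against $g(q)e^{-hq}$, which forces you to track the $q$-dependence of the remainder and argue summability via the gap $h-\bar h>0$. The paper instead performs the $q$-summation \emph{inside} the Fourier integral first, producing a single analytic factor
\[
\sum_{q\ge 0} g(q)\,e^{-qh}\,e^{-qiw/\sqrt{t_n\eta''(h)}}
\]
as part of the integrand $H_n(w)$; it then Taylor-expands this factor together with $\psi(h+\cdot)$ and the cumulant exponential, defines the coefficients $\beta_k(w)$ of the resulting series, and reads off $\hat c_k=\int_{\mathbb{R}}\beta_{2k}(w)\,e^{-w^2/2}/\sqrt{2\pi}\,dw$. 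This ordering eliminates the uniformity-in-$q$ issue entirely (there is only one integral to expand) and makes the appearance of $\sum_q g(q)e^{-qh}(-q)^m/m!$ in $\hat c_k$ immediate. Your route works and is conceptually transparent about where each piece of \eqref{eq:hatck} comes from, but the paper's sum-then-expand order is shorter and avoids the extra bookkeeping you flag in your final paragraph.
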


\begin{remark} \label{remark1}
From the above two results, one can readily find that
\begin{align*}
c_{1}
&=-\frac{1}{2}\psi(h)\frac{e^{-h}+e^{-2h}}{(1-e^{-h})^{3}}\frac{1}{\eta''(h)}
-\frac{1}{2}\frac{1}{1-e^{-h}}\psi''(h)\frac{1}{\eta''(h)}
\\
&\qquad\qquad
+\frac{1}{1-e^{-h}}
\psi(h)\left[\frac{1}{8}\eta^{(4)}(h)\frac{1}{(\eta''(h))^{2}}
-\frac{5}{24}(\eta^{(3)}(h))^{2}\frac{1}{(\eta''(h))^{3}}\right]
\\
&\qquad
+\frac{e^{-h}}{(1-e^{-h})^{2}}\psi'(h)\frac{1}{\eta''(h)}
+\frac{1}{2}\frac{1}{1-e^{-h}}\psi'(h)\eta^{(3)}(h)\frac{1}{(\eta''(h))^{2}}
\\
&\qquad\qquad
-\frac{1}{2}\frac{e^{-h}\psi(h)}{(1-e^{-h})^{2}}\eta^{(3)}(h)\frac{1}{(\eta''(h))^{2}},
\end{align*}
and
\begin{align*}\label{eq:b1}
\hat c_{1}
&=-\frac{1}{2}\psi(h)\sum_{k=0}^{\infty}g(k)e^{-kh}k^{2}\frac{1}{\eta''(h)}
-\frac{1}{2}\sum_{k=0}^{\infty}g(k)e^{-kh}\psi''(h)\frac{1}{\eta''(h)}
\\
&\qquad\qquad
+\sum_{k=0}^{\infty}g(k)e^{-kh}
\psi(h)\left[\frac{1}{8}\eta^{(4)}(h)\frac{1}{(\eta''(h))^{2}}
-\frac{5}{24}(\eta^{(3)}(h))^{2}\frac{1}{(\eta''(h))^{3}}\right]
\nonumber  \\
&\qquad \qquad
+\sum_{k=0}^{\infty}g(k)e^{-kh}k\psi'(h)\frac{1}{\eta''(h)}
+\frac{1}{2}\sum_{k=0}^{\infty}g(k)e^{-kh}\psi'(h)\eta^{(3)}(h)\frac{1}{(\eta''(h))^{2}}
\nonumber  \\
&\qquad\qquad
-\frac{1}{2}\sum_{k=0}^{\infty}g(k)e^{-kh}k \psi(h)\eta^{(3)}(h)\frac{1}{(\eta''(h))^{2}}. \nonumber
\end{align*}
We will use and compute these coefficients in numerical experiments in Section~\ref{sec:numerical}.
\end{remark}

\subsection{Non-lattice case}
We next discuss the case where $\phi$ is non-lattice distributed. We first present the result from \cite{modphi}.

\begin{theorem}[Theorem 4.3. \cite{modphi}]
Suppose $\phi$ is non-lattice, and $(X_{n})_{n\in\mathbb{N}}$ converges
mod-$\phi$ with parameters $(t_{n})_{n\in\mathbb{N}}$ and the limiting function $\psi$ on a band $\mathbb{S}_{(c,d)}$ with $c<0<d$ .
If $x\in(\eta'(0),\eta'(d))$, then
\begin{equation*}
\mathbb{P}(X_{n}\geq t_{n}x)
=\frac{e^{-t_{n}F(x)}}{\sqrt{2\pi t_{n}\eta''(h)}} \cdot \left(\frac{\psi(h)}{h}+o(1) \right),
\end{equation*}
where $h$ is defined via $\eta'(h)=x$ and $F(x):=\sup_{\theta\in\mathbb{R}}\{\theta x-\eta(\theta)\}$.
\end{theorem}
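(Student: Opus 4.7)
The plan is to reduce the large-deviation estimate to a local central limit evaluation via an exponential tilt. I would first introduce the tilted probability measure $\tilde{\mathbb{P}}_n$ via $d\tilde{\mathbb{P}}_n / d\mathbb{P} = e^{h X_n}/\mathbb{E}[e^{h X_n}]$, which is well defined since $h = (\eta')^{-1}(x) \in (0,d)$ by strict convexity of $\eta$ on $(c,d)$ together with $x \in (\eta'(0),\eta'(d))$. With $U_n := X_n - t_n x$, the elementary identity
\[
\mathbb{P}(X_n \geq t_n x) \;=\; \mathbb{E}[e^{h X_n}]\,e^{-h t_n x}\,\tilde{\mathbb{E}}_n\!\bigl[e^{-h U_n}\,1_{\{U_n \geq 0\}}\bigr]
\]
combined with the mod-$\phi$ convergence at $z = h$ (which yields $\mathbb{E}[e^{h X_n}] = e^{t_n \eta(h)}\psi(h)(1+o(1))$) and the Legendre duality $F(x) = h x - \eta(h)$ reduces the problem to showing
\[
\tilde{\mathbb{E}}_n\bigl[e^{-h U_n}\,1_{\{U_n \geq 0\}}\bigr] \;\sim\; \frac{1}{h\sqrt{2\pi t_n \eta''(h)}}.
\]

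To evaluate this tilted expectation I would use the Fubini identity
\[
\tilde{\mathbb{E}}_n\bigl[e^{-h U_n}\,1_{\{U_n \geq 0\}}\bigr] \;=\; h\int_0^\infty e^{-h u}\,\tilde{\mathbb{P}}_n(0 \leq U_n \leq u)\,du
\]
together with a local limit theorem at the central-limit scale. The tilted characteristic function of $U_n$ satisfies
\[
\varphi_n(\xi) \;=\; \exp\!\Bigl(t_n\bigl[\eta(h+i\xi)-\eta(h)-i\xi\eta'(h)\bigr]\Bigr)\,\frac{\psi(h+i\xi)}{\psi(h)}(1+o(1))
\]
locally uniformly in $\xi$, and the Taylor expansion $\eta(h+i\xi) - \eta(h) - i\xi \eta'(h) = -\tfrac12 \xi^2 \eta''(h) + O(\xi^3)$ implies that $U_n/\sqrt{t_n \eta''(h)}$ converges in distribution to a standard normal. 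Promoting this CLT to a local version—namely $\sqrt{t_n \eta''(h)} \cdot \tilde{\mathbb{P}}_n(0 \leq U_n \leq u) \to u/\sqrt{2\pi}$ pointwise for each fixed $u \geq 0$—and applying dominated convergence with the uniform concentration bound $\tilde{\mathbb{P}}_n(0 \leq U_n \leq u) \leq C u/\sqrt{t_n}$ to interchange limit and integral yields
\[
\sqrt{t_n \eta''(h)}\;\tilde{\mathbb{E}}_n\bigl[e^{-h U_n}\,1_{\{U_n \geq 0\}}\bigr] \;\to\; \frac{h}{\sqrt{2\pi}} \int_0^\infty u\, e^{-h u}\,du \;=\; \frac{1}{h\sqrt{2\pi}},
\]
which is the required asymptotic equivalent.

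The main obstacle, and where the non-lattice hypothesis on $\phi$ is indispensable, is establishing the local limit theorem for the tilted $U_n$ at the CLT scale. Mod-$\phi$ convergence only supplies control of $\varphi_n$ on compact subsets of $\mathbb{R}$, so the pointwise convergence of $\sqrt{t_n \eta''(h)}\,\tilde{\mathbb{P}}_n(0 \leq U_n \leq u)$ must be extracted by a Stone-type argument that in turn requires control of the Fourier contribution from large $|\xi|$. Because $\phi$ is non-lattice, $|e^{\eta(h+i\xi) - \eta(h)}|$ is the modulus of the characteristic function of the exponentially tilted infinitely divisible law $\phi_h$; the standard fact that a non-lattice characteristic function is strictly below $1$ in modulus away from the origin, with uniform separation from $1$ on any compact set bounded away from $0$, forces $|\varphi_n(\xi)|$ to decay exponentially in $t_n$ on such sets. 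Combining this exponential tail decay with Esseen's smoothing inequality then yields the required local limit theorem and completes the proof, producing $\mathbb{P}(X_n \geq t_n x) = \frac{e^{-t_n F(x)}}{\sqrt{2\pi t_n \eta''(h)}}\bigl(\psi(h)/h + o(1)\bigr)$.
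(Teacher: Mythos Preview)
Your proposal is correct and follows essentially the same route as the paper's proof of the surrounding Proposition~\ref{prop:modphi-nonlattice-gz} (which itself follows \cite{modphi}): exponential tilt at $h$, rewriting $\mathbb{P}(X_n\ge t_nx)$ as an integral of $e^{-h\,\cdot}$ against the tilted distribution, and then Fourier-analytic control using the non-lattice bound $|e^{\eta(h+i\xi)-\eta(h)}|<1$ for $\xi\neq 0$ together with Esseen's smoothing inequality. The only organizational difference is that the paper establishes a uniform Berry--Esseen-type approximation of the rescaled tilted CDF $F_n$ by a Gaussian and then evaluates the resulting integral by Laplace's method, whereas you phrase the same step as a Stone-type local limit theorem for the unscaled $U_n$ and pass to the limit via dominated convergence with a concentration-function bound; the analytic content is the same.
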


In Proposition~\ref{prop:modphi-nonlattice-gz} below,
we sharpen the above result and extend it to obtain precise estimates for tail expectations under additional assumptions.
The proof (see Appendix~\ref{appendixA}) relies on some delicate applications of
Esseen's smoothing inequality, Laplace's method and Fa\`{a} di Bruno's formula
(see Appendix~\ref{sec:known}).

\begin{proposition}\label{prop:modphi-nonlattice-gz}
Suppose $\phi$ is non-lattice distributed, and $(X_{n})_{n\in\mathbb{N}}$ converges
mod-$\phi$ with parameters $(t_{n})_{n\in\mathbb{N}}$ and the limiting function $\psi$ at the speed $O((t_n)^{-v})$
on a band $\mathbb{S}_{(c,d)}$ with $c<0<d$.
Assume that $x\in(\eta'(0),\eta'(d))$.

(i) We have as $t_{n}\rightarrow\infty$,  
\begin{equation*}
\mathbb{P}(X_{n}\geq t_{n} x)
=\frac{e^{-t_{n}F(x)}}{\sqrt{2\pi t_{n}\eta''(h)}}
\left[d_{0}+\frac{d_{1}}{t_{n}}+\frac{d_{2}}{t_{n}^{2}}+\cdots
+\frac{d_{v-1}}{t_{n}^{v-1}}+o\left(t_{n}^{-v+1}\right)\right].
\end{equation*}
Here, $(d_{k})_{k=0}^{v-1}$ are given by
\begin{align*}
d_{k}
&=\frac{1}{h}
\cdot\sum_{p=0}^{2k}
\sum_{\ell=0}^{p}\frac{\psi^{(p-\ell)}(h)}{(p-\ell)!}
\\
&\qquad
\cdot\sum_{\mathcal{S}_{\ell}}\frac{1}{m_{1}!1!^{m_{1}}m_{2}!2!^{m_{2}}\cdots m_{\ell}!\ell!^{m_{\ell}}}
\cdot\prod_{j=1}^{\ell}\left(\frac{1}{\eta''(h)}\frac{\eta^{(j+2)}(h)}{(j+2)(j+1)}\right)^{m_{j}}
\frac{1}{(\eta''(h))^{p/2}}
\\
&\qquad\cdot
\sum_{m=0}^{\lfloor\frac{p}{2}+m_{1}+\cdots+m_{\ell}\rfloor}
\frac{2^{m}(-1)^{m}(p+2(m_{1}+\cdots+m_{\ell}))!}{m!(p+2(m_{1}+\cdots+m_{\ell})-2m)!}
\\
&\qquad\qquad\qquad\cdot
\frac{\Gamma(k+(m_{1}+\cdots+m_{\ell})-m+1)a_{k+m-p-(m_{1}+\cdots+m_{\ell})}}{(h^{2}\eta''(h))^{-\frac{p}{2}+k}},
\end{align*}
where $\mathcal{S}_{\ell}$ is defined in \eqref{eqn:Sn} and $(a_{k})_{k=0}^{\infty}$ is defined recursively as:
\begin{equation*}
a_{k}=(-1)^{k}-\sum_{j=0}^{k-1}\frac{(j+s)_{k-j}}{(k-j)!2^{k-j}}a_{j},
\end{equation*}
and $(\cdot)_{i}$ is the Pochhammer symbol
\footnote{Pochhammer symbol is defined as $(x)_{n}=\Gamma(x+n)/\Gamma(x)$ where $\Gamma(\cdot)$ is the Gamma function.}.

(ii) Furthermore, assume that $g:\mathbb{R}_{+}\rightarrow\mathbb{R}$
admits the expansion $g(x)=\sum_{k=0}^{\infty}g_{k}x^{k+\Delta}$ where $\Delta\in[0,1)$
{and there exist some $\bar{a}>0$ and $0<\bar{h}<h$ so that $g_{k}\leq\bar{a}\frac{\bar{h}^{k}}{k!}$
for every $k\in\mathbb{N}$.}
Then we have
\begin{equation*}
\mathbb{E}\left[g(X_{n}-t_{n}x)1_{X_{n}\geq t_{n}x}\right]
=\frac{e^{-t_{n}F(x)}}{\sqrt{2\pi}\sqrt{t_{n}\eta''(h)}}
\left[\hat{d}_{0}+\frac{\hat{d}_{1}}{t_{n}}+\frac{\hat{d}_{2}}{t_{n}^{2}}+\cdots
+\frac{\hat{d}_{n-1}}{t_{n}^{v-1}}+o\left(t_{n}^{-v+1}\right)\right],
\end{equation*}
as $t_{n}\rightarrow\infty$, where $(\hat{d}_{k})_{k=0}^{v-1}$ are given by
\begin{align*}
\hat{d}_{k}
&=\sum_{q=0}^{\infty}g_{q}
\cdot
\frac{1}{h^{\Delta+q+1}}
\cdot\sum_{p=0}^{2k}
\sum_{\ell=0}^{p}\frac{\psi^{(p-\ell)}(h)}{(p-\ell)!}
\\
&\qquad
\cdot\sum_{\mathcal{S}_{\ell}}\frac{1}{m_{1}!1!^{m_{1}}m_{2}!2!^{m_{2}}\cdots m_{\ell}!\ell!^{m_{\ell}}}
\cdot\prod_{j=1}^{\ell}\left(\frac{1}{\eta''(h)}\frac{\eta^{(j+2)}(h)}{(j+2)(j+1)}\right)^{m_{j}}
\frac{1}{(\eta''(h))^{p/2}}
\\
&\qquad\cdot
\sum_{m=0}^{\lfloor\frac{p}{2}+m_{1}+\cdots+m_{\ell}\rfloor}
\frac{2^{m}(-1)^{m}(p+2(m_{1}+\cdots+m_{\ell}))!}{m!(p+2(m_{1}+\cdots+m_{\ell})-2m)!}
\\
&\qquad\qquad\qquad\cdot
\frac{\Gamma(k+\Delta+q+(m_{1}+\cdots+m_{\ell})-m+1)a_{k+m-p-(m_{1}+\cdots+m_{\ell})}}{(h^{2}\eta''(h))^{-\frac{p}{2}+k}}.
\end{align*}
\end{proposition}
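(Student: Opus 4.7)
The plan is to follow the standard exponential-tilting strategy for precise large deviations, reducing the tail quantity to an integral against a near-Gaussian distribution under the tilted measure, and then extracting successive corrections by an Edgeworth-type expansion. Because $\phi$ is non-lattice and $X_n$ is not assumed to admit a density, the usual local-limit approach from the lattice case (as in Theorem~\ref{thm:3}) is unavailable, and Esseen's smoothing inequality must be used instead.

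First, I would introduce the tilted measure $\tilde{\mathbb{P}}_n$ defined by $d\tilde{\mathbb{P}}_n/d\mathbb{P} = e^{hX_n}/\mathbb{E}[e^{hX_n}]$, where $h$ is chosen so that $\eta'(h)=x$. Writing $Y_n=X_n-t_nx$ and using the Legendre identity $F(x)=hx-\eta(h)$ together with the mod-$\phi$ relation $\mathbb{E}[e^{hX_n}] = e^{t_n\eta(h)}\psi(h)(1+O(t_n^{-v}))$ from Definition~\ref{def:mod-phi}, one rewrites
\[
\mathbb{E}\bigl[g(Y_n)1_{Y_n\ge 0}\bigr] = e^{-t_n F(x)}\psi(h)\bigl(1+O(t_n^{-v})\bigr)\cdot \tilde{\mathbb{E}}_n\bigl[g(Y_n)e^{-hY_n}1_{Y_n\ge 0}\bigr].
\]
Under $\tilde{\mathbb{P}}_n$, the variable $Y_n$ is centered with variance of order $\sigma_n^2 := t_n\eta''(h)$, and
\[
\tilde{\mathbb{E}}_n[e^{i\xi Y_n}] = \exp\bigl(t_n[\eta(h+i\xi)-\eta(h)-i\xi\eta'(h)]\bigr)\cdot \frac{\psi(h+i\xi)}{\psi(h)}\bigl(1+O(t_n^{-v})\bigr).
\]

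Next, after rescaling by $\sigma_n$, Taylor-expanding $\eta$ and $\psi$ around $h$, and exponentiating via Fa\`a di Bruno's formula, one obtains a formal Edgeworth expansion of the tilted characteristic function in powers of $t_n^{-1/2}$, whose coefficients are assembled precisely from the building blocks $\eta^{(j+2)}(h)/((j+2)(j+1))$ and $\psi^{(p-\ell)}(h)/(p-\ell)!$ that appear in the statement. Esseen's smoothing inequality then converts this into a uniform expansion of the tilted distribution function $\tilde G_n$ of $Y_n$ with error $o(t_n^{-v+1/2})$. Substituting into $\int_0^\infty g(y)e^{-hy}d\tilde G_n(y)$ and, for part (ii), expanding $g(y)=\sum_k g_k y^{k+\Delta}$ (the bound $g_k\le\bar a\bar h^k/k!$ with $\bar h<h$ ensures termwise integrability), the whole computation reduces to integrals of the form
\[
\int_0^\infty y^{s+k+\Delta}\,e^{-hy}\cdot \frac{1}{\sqrt{2\pi\sigma_n^2}}\,e^{-y^2/(2\sigma_n^2)}\,dy
\]
times polynomial prefactors in $y/\sigma_n$. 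These erfc-type integrals admit a classical asymptotic series in $(h^2\sigma_n^2)^{-1}$ obtained by repeated integration by parts against the pure Gamma integral $\int_0^\infty y^{s}e^{-hy}dy = \Gamma(s+1)/h^{s+1}$; the recursion generated by this integration by parts is exactly the one defining $a_k$ in the statement. Collecting contributions of equal order $t_n^{-k}$ and regrouping the derivatives of $\psi$ and the composite $\exp\circ(\text{Taylor remainder of }\eta)$ via Fa\`a di Bruno produces the explicit formulas for $d_k$ and $\hat d_k$.

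The main obstacle is controlling the error in the Edgeworth step uniformly up to order $v$ in the non-lattice setting. The zeroth-order version in \cite{modphi} only needs Esseen's inequality applied to the bare Gaussian approximation; extending to order $v$ requires showing that the Fourier-side expansion retains enough decay away from the saddle for the smoothing parameter in Esseen's inequality to be chosen so as to absorb the polynomial-in-$y/\sigma_n$ corrections without degrading the target rate by more than a factor of $t_n^{-1/2}$. Analyticity of $\eta$ and $\psi$ on $\mathbb{S}_{(c,d)}$, together with the assumed speed of mod-$\phi$ convergence, provides the characteristic-function decay needed to close this argument. A secondary delicate point in part (ii) is the interchange of the infinite sum over $k$ in $g(y)=\sum g_k y^{k+\Delta}$ with the error-controlled integrals; the hypothesis $g_k\le\bar a\bar h^k/k!$ with $\bar h<h$ yields geometric domination by a tilted exponential and legitimises Fubini.
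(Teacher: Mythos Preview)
Your proposal is correct and follows essentially the same route as the paper: exponential tilting at $h$, an Edgeworth-type expansion of the tilted distribution function established via Esseen's smoothing inequality (with the Taylor/Fa\`a di Bruno expansion of $\eta$ and $\psi$ producing exactly the combinatorial structure in the coefficients), substitution back into the integral, and termwise expansion of the remaining integrals $\int_0^\infty y^{s}e^{-h\sqrt{t_n\eta''(h)}\,y}e^{-y^2/2}\,dy$.

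The only noteworthy difference is in this last step: you describe the asymptotics of those integrals as coming from ``repeated integration by parts against the pure Gamma integral,'' whereas the paper obtains them by a direct application of Laplace's method (their Lemma~\ref{yintegral2}), after the substitution $p(y)=\tfrac12(y+1)^2$, $q(y)=y^s$. The specific recursion for $a_k$ in the statement is exactly the one generated by expanding $q/p'=y^s/(1+y)$ in powers of $v=y+\tfrac12 y^2$, which is the Laplace-method bookkeeping; an integration-by-parts derivation would of course yield the same asymptotic series but packages the recursion differently. Also, the paper first treats the monomial case $g(z)=z^\gamma$ and then sums, while you expand $g$ at the outset; and the Edgeworth error the paper actually proves is $o(t_n^{-v})$ (not $o(t_n^{-v+1/2})$), which is what is needed to land at $o(t_n^{-v+1})$ after the final integration. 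These are bookkeeping differences only.
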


\begin{remark}
From Part (i) of the above result, one can find that
$d_{0}=\frac{\psi(h)}{h},$
and
\begin{align*}
d_{1}&=
-\frac{\psi(h)}{h^{3}\eta''(h)}
+\psi(h)\left(\frac{\psi'(h)}{\psi(h)}-\frac{\eta'''(h)}{\eta''(h)}\right)
\frac{1}{h^{2}\eta''(h)}
\\
&\qquad
+\frac{\psi(h)}{h}\left[\frac{1}{8}\frac{\eta^{(4)}(h)}{(\eta''(h))^{2}}
-\frac{5}{24}\frac{(\eta^{(3)}(h))^{2}}{(\eta''(h))^{3}}
-\frac{1}{2}\frac{\psi''(h)}{\psi(h)\eta''(h)}
+\frac{1}{2}\frac{\eta^{(3)}(h)\psi'(h)}{\psi(h)(\eta''(h))^{2}}\right].
\end{align*}
Similarly, from Part (ii) of the above result, one can check that when $g(x) = x^{\gamma}$ for $\gamma \ge 0$, we have
\begin{equation*}
\hat{d}_{0}=\frac{\Gamma(\gamma+1)\psi(h)}{h^{\gamma+1}},
\end{equation*}
and
\begin{align*}
\hat{d}_{1}&=
-\frac{\psi(h)}{h^{\gamma+1}}\frac{\Gamma(\gamma+2)(1+\frac{\gamma}{2})}{h^{2}\eta''(h)}
+\frac{\psi(h)}{h^{\gamma+1}}\left(\frac{\psi'(h)}{\psi(h)}-\frac{\eta'''(h)}{\eta''(h)}\right)
\frac{\Gamma(\gamma+2)}{h\eta''(h)}
\\
&\qquad
+\frac{\psi(h)}{h^{\gamma+1}}\Gamma(\gamma+1)\left[\frac{1}{8}\frac{\eta^{(4)}(h)}{(\eta''(h))^{2}}
-\frac{5}{24}\frac{(\eta^{(3)}(h))^{2}}{(\eta''(h))^{3}}
-\frac{1}{2}\frac{\psi''(h)}{\psi(h)\eta''(h)}
+\frac{1}{2}\frac{\eta^{(3)}(h)\psi'(h)}{\psi(h)(\eta''(h))^{2}}\right].
\end{align*}
We will use and compute these coefficients in numerical experiments in Section~\ref{sec:numerical}.
\end{remark}


\section{Mod-$\phi$ convergence for affine point processes and proofs of results in Section~\ref{sec:main}}\label{sec:mod-app}
In this section, we prove the main results in Section~\ref{sec:main}. The key step is to establish the mod-$\phi$ convergence of the sequence of random variables $(V(t))_{t>0}$ in \eqref{eq:Vt}. We summarize the result below in Section~\ref{sec:convg-V} and present its proof in Section~\ref{thm:modphi-app}.

\subsection{Mod-$\phi$ convergence of $(V(t))_{t>0}$} \label{sec:convg-V}

Recall from Theorem~\ref{thm:zhang2} that $\eta(\theta)$ and $u^*(\theta)$ are well defined for a real number $\theta \le \theta_c$ with $\theta_c>0$.  One can naturally extends their definitions to a complex number $\theta \in \mathbb{C}$ with the real part $\mathcal{R}(\theta)\leq\theta_{c}$, so that they fit in the mod-$\phi$ convergence framework (Definition~\ref{def:mod-phi}) which deals with characteristic functions. Also recall the parameters of an affine point process introduced in Section~\ref{sec:app}. 

\begin{theorem} \label{thm:app-mod}

%
%
%
%
%
%

The sequence of random variables $(V(t))_{t>0}$ converges mod-$\phi$ exponentially fast as $t \rightarrow \infty$
along any lattice
with limiting function $\psi$, where $e^{\eta(\theta)}=\int_{\mathbb{R}} e^{\theta x} \phi(dx)$,
\begin{equation}\label{psiDefn}
\psi(\theta)=e^{u^{\ast}(\theta)^{T}x_{0}+B(\infty;\theta,u^{\ast}(\theta))},
\end{equation}
and $\psi$ is analytic for $\mathcal{R}(\theta)<\theta_{c}$.
Here,
\begin{align}
B(t;\theta,\delta)&=\int_{0}^{t}b^{T}A(s;\theta,\delta)ds+\int_{0}^{t}\frac{1}{2}A^{T}aAds
\nonumber \\
&\qquad\qquad
+\sum_{i=1}^{n}\lambda_{i}\int_{0}^{t}\int_{\mathbb{R}_{+}}\left(e^{A^{T}\gamma_{i}z}-1\right)e^{(\theta+\delta^{T}\gamma_{i})z}\varphi_{i}(dz)ds, \label{eq:B}
\end{align}
is a scalar function, where
$A(t;\theta,\delta)=(A_{1}(t;\theta,\delta),\cdots,A_{n}(t;\theta,\delta))$ is a matrix function,
with $A(0;\theta,\delta)=-\delta$, $B(0;\theta,\delta)=0$ and
\begin{equation} \label{eq:A}
\frac{d}{dt}A_{j}(t;\theta,\delta)
=-\sum_{i=1}^{n}A_{i}\beta_{i,j}^{\ast}+\frac{1}{2}A^{T}\alpha^{j}A+\sum_{i=1}^{n}
\int_{\mathbb{R}_{+}}\left(e^{A^{T}\gamma_{i}z}-1\right)e^{(\theta+\delta^{T}\gamma_{i})z}\varphi_{i}(dz)\kappa_{i,j},
\end{equation}
for $j=1,2,\ldots,n$, where $\beta^{\ast}$ is defined in \eqref{beta:star}.
\end{theorem}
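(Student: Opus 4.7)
The plan is to exploit the affine structure of $V(t)$. By standard affine jump-diffusion theory (cf. \cite{Duffie2000, Errais, Zhang}), the transform of $V(t)$ admits the exponential-affine form
\begin{equation*}
\mathbb{E}[e^{\theta V(t)}] = \exp\bigl( w(t;\theta)^{T} x_{0} + \Phi(t;\theta) \bigr),
\end{equation*}
where $w(t;\theta)\in\mathbb{C}^{n}$ satisfies a Riccati-type ODE with initial condition $w(0;\theta)=0$ whose right-hand side is exactly the left-hand side of the fixed-point equation \eqref{eq:u-ast}, and $\Phi(t;\theta)$ solves a companion drift-type ODE $\partial_{t}\Phi = G(w(t;\theta);\theta)$ with $\Phi(0;\theta)=0$, where $G(u;\theta) = u^{T}b + \tfrac{1}{2}u^{T}au + \sum_{i}\lambda_{i}(\mathbb{E}[e^{(\theta + u^{T}\gamma_{i})Z_{i}}]-1)$. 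Both representations extend to $\theta\in\mathbb{C}$ with $\mathcal{R}(\theta) < \theta_{c}$ by analytic continuation, as already used in \cite{Zhang}.

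The function $u^{\ast}(\theta)$ of Theorem~\ref{thm:zhang2} is precisely the relevant equilibrium of the Riccati flow, and $G(u^{\ast}(\theta);\theta) = \eta(\theta)$. I would set $A(t;\theta) := w(t;\theta) - u^{\ast}(\theta)$, which yields $A(0;\theta) = -u^{\ast}(\theta)$, matching the initial condition of the theorem when $\delta = u^{\ast}(\theta)$. Subtracting the equilibrium equation from the ODE for $w$ and using the identity $e^{(\theta + w^{T}\gamma_{i})z} = e^{(\theta + u^{\ast T}\gamma_{i})z}\, e^{A^{T}\gamma_{i}z}$ yields exactly \eqref{eq:A}, with $\beta^{\ast}$ absorbing the linearization at $u^{\ast}(\theta)$. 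Writing
\begin{equation*}
\Phi(t;\theta) - t\eta(\theta) = \int_{0}^{t}\bigl[G(w(s;\theta);\theta) - G(u^{\ast}(\theta);\theta)\bigr]\,ds
\end{equation*}
and substituting $w = A + u^{\ast}$ identifies the integrand with that of \eqref{eq:B}, so that in the limit $t\to\infty$ this expression becomes $B(\infty;\theta,u^{\ast}(\theta))$.

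The central technical step is to show that $A(t;\theta)\to 0$ exponentially fast as $t\to\infty$, locally uniformly for $\theta$ in compact subsets of $\{\mathcal{R}(\theta)<\theta_{c}\}$. The linearization of \eqref{eq:A} at $A=0$ is $-\beta^{\ast}$; by Assumption~\ref{assump1}(III) at $\theta=0$ and continuity of the spectrum in $\theta$, $\beta^{\ast}$ is positive stable on a neighborhood of the real axis inside the strip, giving local exponential contraction at a rate bounded below by the spectral gap. A Lyapunov / bootstrap argument is then needed to ensure that $w(t;\theta)$ does not blow up in finite time and that $A(t;\theta)$ enters and stays in the basin of attraction of $0$, yielding rates uniform on compact subsets of $\theta$. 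Once this is in hand, the exponential decay of the integrand gives absolute convergence of $\int_{0}^{\infty}[G(w(s;\theta);\theta)-\eta(\theta)]\,ds = B(\infty;\theta,u^{\ast}(\theta))$ and
\begin{equation*}
e^{-t\eta(\theta)}\mathbb{E}[e^{\theta V(t)}] = \exp\bigl(w(t;\theta)^{T}x_{0} + \Phi(t;\theta) - t\eta(\theta)\bigr) \longrightarrow e^{u^{\ast}(\theta)^{T}x_{0}+B(\infty;\theta,u^{\ast}(\theta))} = \psi(\theta),
\end{equation*}
locally uniformly, with exponential rate. Analyticity of $\psi$ on the strip follows from that of $u^{\ast}$ and of $\theta\mapsto B(\infty;\theta,u^{\ast}(\theta))$; the lattice qualifier merely reflects that $(V(t))_{t>0}$ is viewed along $t_{n}=n\Delta$ in order to fit Definition~\ref{def:mod-phi}.

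The main obstacle I expect is this stability analysis: establishing \emph{global} exponential convergence of the complex-valued Riccati flow uniformly on compact subsets of the strip. The linearization argument is straightforward only close to $u^{\ast}(\theta)$, so ruling out finite-time blow-up of $w(t;\theta)$ before it enters the basin of attraction of $u^{\ast}(\theta)$, and controlling the nonlinear jump and quadratic terms uniformly as $\mathcal{R}(\theta)\uparrow\theta_{c}$, will be the real work. I anticipate this will require combining the positive-stability assumption, the exponential-moment hypothesis $\sup\{\theta:\mathbb{E}[e^{\theta Z_{i}}]<\infty\}>0$, and the measure-change construction used in \cite{Zhang}, together with a separation of real and imaginary parts of $\theta$ to leverage the known real-$\theta$ convergence as a baseline.
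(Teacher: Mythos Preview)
Your approach to the convergence $e^{-t\eta(\theta)}\mathbb{E}[e^{\theta V(t)}]\to\psi(\theta)$ is correct and essentially equivalent to the paper's, though packaged differently. The paper works under the tilted measure $Q^{\ast}_{\theta}$ from \cite{Zhang}, writing $\mathbb{E}[e^{\theta V(t)-\eta(\theta)t}]=e^{u^{\ast}(\theta)^{T}x_{0}}\,\mathbb{E}^{Q^{\ast}_{\theta}}[e^{-u^{\ast}(\theta)^{T}X(t)}]$ and computing the latter via the affine transform of $X$ under $Q^{\ast}_{\theta}$; your $A=w-u^{\ast}$ is exactly their $A$. The payoff of the measure-change packaging is that pointwise convergence $A(t)\to 0$ follows immediately from ergodicity of $X$ under $Q^{\ast}_{\theta}$ (Proposition~11 in \cite{Zhang}), sidestepping your basin-of-attraction and finite-time-blowup concerns; the exponential rate is then extracted by linearizing only for large $t$. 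One technical correction: the linearization of \eqref{eq:A} at $A=0$ is not $-\beta^{\ast}$ alone, since the jump term $e^{A^{T}\gamma_{i}z}-1$ also contributes a linear piece; it is the positive stability of $(\beta^{\ast})^{T}-(\kappa^{\ast})^{T}\mathbb{E}[\gamma Z^{\ast}]^{T}$ (again from \cite{Zhang}) that drives the decay, not that of $\beta^{\ast}$ by itself.

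The genuine gap is that you never verify that $e^{\eta(\theta)}$ is the moment generating function of a non-constant \emph{infinitely divisible} distribution $\phi$. This is part of Definition~\ref{def:mod-phi} and is required for the results of \cite{modphi} to apply downstream. The paper devotes an entire subsection to this: it constructs an explicit random variable $Y=Y_{1}+Y_{2}$ with $\mathbb{E}[e^{\theta Y}]=e^{\eta(\theta)}$ by analyzing the degenerate affine process with $a=0$, $b=0$, $\lambda=0$ (for which $V(\cdot,0,0,0;\infty)$ exists almost surely and $\mathbb{E}[e^{\theta V(X(0),0,0,0;\infty)}]=e^{u^{\ast}(\theta)^{T}X(0)}$), and then exhibits $m$-divisibility by the scaling $b\mapsto b/m$, $\lambda\mapsto\lambda/m$. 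That same construction also characterizes when $\phi$ is lattice versus non-lattice, which is needed for the case split in Theorem~\ref{thm:preciseLDP}. Without this step you have shown only that the renormalized transforms converge, not mod-$\phi$ convergence in the required sense.
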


Several remarks are in order. First, the infinite divisible distribution $\phi$ will be identified and given in Section~\ref{Sec:Infinite}.
Second, note that $(V(t))_{t>0}$ is a continuous-time process, so to be consistent with Definition~\ref{def:mod-phi}, we have stated
the mod-$\phi$ convergence of $V(t)$ along any lattice (i.e. $t=b^{\ast}k$ for any $b^{\ast}>0$ and $k \in \mathbb{N}$ with $k \rightarrow \infty$) in the above result. Finally, we use the notation $B(\infty;\theta,u^{\ast}(\theta))$ in \eqref{psiDefn} to denote $\lim_{t \rightarrow \infty} B(t;\theta,u^{\ast}(\theta))$ which will be shown to exist in the proof in Section~\ref{thm:modphi-app}.



Relying on Theorem~\ref{thm:app-mod}, we are ready to prove the results in Section~\ref{sec:main} in the next section.


\subsection{Proofs of Theorems~\ref{thm:preciseLDP} and \ref{CLTThm} in Section~\ref{sec:main}}


\begin{proof}
We first prove Theorem~\ref{thm:preciseLDP}. For Part (1), without loss of generality, assume that
the random variables in the set $$\left\{\sum_{i=1}^{n}\sum_{j=1}^{n_{i}}Z_{ij}, \text{where $n_{i}\in\mathbb{N}\cup\{0\}$
and $Z_{ij}$ are i.i.d. distributed as $Z_{i}$}\right\}$$
are supported on $\mathbb{N}\cup\{0\}$.\footnote{{If these random variables are supported on $\mathbf{b}\mathbb{N}\cup\{0\}$
for some $\mathbf{b}>0$, then in $\mathbb{P}(V(t)\geq Rt)$ etc., with $Rt\in \mathbf{b}\mathbb{N}$,
it is equivalent to $\mathbb{P}(\tilde{V}(t)\geq\tilde{R}t)$ with $\tilde Rt\in\mathbb{N}$ and
$\tilde{V}(t)=V(t)/\mathbf{b}$ and $\tilde{R}=R/\mathbf{b}$.}}
Then one can readily see that $V(t)$ takes values in $\mathbb{N}\cup\{0\}$. In addition, it will be clear from Section~\ref{Sec:Infinite} that the infinite divisible distribution $\phi$ in Theorem~\ref{thm:app-mod} also takes values in $\mathbb{N}\cup\{0\}$.
 Hence to expand $\mathbb{P}(V(t)\geq Rt)$, we can assume $Rt\in\mathbb{N}$, and define
$X_{k}=V(\frac{1}{R}k)$ and $t_{k}=\frac{1}{R}k$ for $k \in \mathbb{N}$ to apply the mod-$\phi$ convergence theory.
Then, Part (1) of Theorem~\ref{thm:preciseLDP}
directly follows from Theorem~\ref{thm:app-mod}, Theorem~\ref{thm:3} and Proposition~\ref{prop:modphi-lattice}.
 Similarly, Part (2) of Theorem~\ref{thm:preciseLDP}
follows from Theorem~\ref{thm:app-mod},  Proposition~\ref{prop:modphi-nonlattice-gz} and Theorem~A in \cite{Harrington1978} which allows one to extend the convergence along any lattice (i.e. $t=b^{\ast}k$ for any $b^{\ast}>0$ and $k \in \mathbb{N}$ with $k \rightarrow \infty$
so that we can define $X_{k}=V(b^{\ast}k)$ and $t_{k}=b^{\ast}k$ to apply the mod-$\phi$ convergence theory) to  any $t \rightarrow \infty$.

We next prove Theorems~\ref{CLTThm}. It follows from Theorems 3.9 and 4.8 in \cite{modphi}, and Theorem \ref{thm:app-mod} that for $y=o(t^{1/6})$, as $t \rightarrow \infty,$
\begin{equation*}
\mathbb{P}\left(V(t)\geq t\eta'(0)+\sqrt{t\eta''(0)}y\right)
=\int_{y}^{\infty}\frac{e^{-\frac{u^{2}}{2}}}{\sqrt{2\pi}}du
\cdot(1+o(1)),
\end{equation*}
where $\eta(\cdot)$ is defined in \eqref{eq:eta-phi}. So it suffices to show $\eta'(0)=r$ and $\eta''(0)=\sigma^{2}$. From \eqref{eq:eta-phi}, we have
\begin{eqnarray*}
\eta'(0)&=&(u^{\ast})'(0)^{T}b+\sum_{i=1}^{n}\lambda_{i}\mathbb{E}\left[(1+(u^{\ast})'(0)^{T}\gamma_{i})Z_i\right],\\
\eta''(0)&=&(u^{\ast})''(0)^{T}b+\sum_{i=1}^{n}\lambda_{i}\mathbb{E}\left[((1+(u^{\ast})'(0)^{T}\gamma_{i})Z_i)^{2}
\right]
+\sum_{i=1}^{n}\lambda_{i}\mathbb{E}\left[(u^{\ast})''(0)^{T}\gamma_{i}Z_i\right].
\end{eqnarray*}
To compute $(u^{\ast})'(0)$ and $(u^{\ast})''(0)$, we can use \eqref{eq:u-ast} and find that
\begin{equation*}
\beta^{T}(u^{\ast})^{'}(0)-
\kappa^{T}\mathbb{E}[(1+(u^{\ast})^{'}(0)^{T}\gamma)Z]=0,
\end{equation*}
and
\begin{align*}
&\beta^{T}((u^{\ast})''(0) -((u^{\ast})^{'}(0))^{T}\alpha(u^{\ast})^{'}(0)
\\
&\qquad
-\kappa^{T}\mathbb{E}[((1+(u^{\ast})^{'}(0)^{T}\gamma)Z)^{2}]
-\kappa^{T}\mathbb{E}[(u^{\ast})''(0)^{T}\gamma Z]=0.
\end{align*}
Then it is straightforward to verify that $\eta'(0)=r$ and $\eta''(0)=\sigma^{2}$, where $r, \sigma^2$ are given in Theorem~\ref{thm:zhang1}. Hence, the conclusion follows.
\end{proof}

\subsection{Proof of Theorem~\ref{thm:app-mod}}\label{thm:modphi-app}
We prove Theorem~\ref{thm:app-mod} in this section. By Definition~\ref{def:mod-phi}, it suffices to show
\begin{itemize}

\item [(i)]
\begin{equation} \label{eq:converg-exp}
\sup_{\theta \in K } \left| e^{-\eta(\theta)t} \cdot \mathbb{E}\left[e^{\theta V(t)}\right] - \psi(\theta) \right| \le C_K e^{- \bar C_K t},
\end{equation}
where $K$ is a compact subset of the strip $\mathbb{S}_{(0,\theta_{c})}$ for some $\theta_{c}>0$,
\footnote{As we only study the right tail of $V(t)$,
we only need to consider $\theta \in \mathbb{S}_{(0, \theta_{c})}$. For the expression of $\theta_{c}$ and related discussions,
see Section 4 in \cite{Zhang}. }
and $C_K, \bar C_K >0$ are two constants.

\item [(ii)] For $\eta(\cdot)$ given in \eqref{eq:eta-phi}
with $\theta\in\mathbb{C}$ and $\mathcal{R}(\theta)\leq\theta_{c}$, we have $e^{\eta(\theta)}=\mathbb{E}[e^{\theta Y}]$
for some (non-constant) infinitely divisible random variable $Y$;

\end{itemize}

We present the proofs of (i) and (ii) in the next two sections.
Before we proceed,
we recall from \cite{Zhang} that they introduced $Q^*_{\theta}$,
the equivalent probability measure induced by the martingale
$M^*_{\theta}(t) = \exp[ \theta V(t) - \eta (\theta) t + u^*(\theta)^{T}(X(t) - X(0))]$, such that for given $X(0) = x_0$,
\begin{equation} \label{eq:mart}
\mathbb{E}\left[e^{\theta V(t)-\eta(\theta)t}\right]
=e^{u^{\ast}(\theta)^{T}X(0)}\mathbb{E}^{Q^*_{\theta}}\left[e^{-u^{\ast}(\theta)^{T}X(t)}\right].
\end{equation}
By Girsanov's theorem, under the measure $Q^{\ast}_{\theta}$, the process $X$ is still affine and satisfies the SDE in \eqref{eq:SDE-affine}
with parameters ($\alpha$, $a$, $b$, $\beta^{\ast}$, $\lambda^{\ast}$, $\kappa^{\ast}$, $\gamma$),
and measure $\varphi^{\ast}_{i}$, where
\begin{align}
&\lambda^{\ast}_{i}=\lambda_{i}\int_{\mathbb{R}_{+}}e^{(\theta+u^{\ast}(\theta)^{T}\gamma_{i})z}\varphi_{i}(dz),
\label{Qast} \\
&\kappa^{\ast}_{i}=\kappa_{i}\int_{\mathbb{R}_{+}}e^{(\theta+u^{\ast}(\theta)\gamma_{i})z}\varphi_{i}(dz),
\label{kappa-ast}\\
&\beta^{\ast}=
\left(
\begin{array}{cc}
\beta_{I,I}-\text{diag}(\alpha_{11}^{1}u^{\ast}_{1}(\theta),\ldots,\alpha_{mm}^{m}u_{m}^{\ast}(\theta)) & 0
\\
\beta_{J,I} & \beta_{J,J}
\end{array}
\right),
\label{beta:star}
\\
&\varphi^{\ast}_{i}(dz)=\frac{e^{(\theta+u^{\ast}(\theta)^{T}\gamma_{i})z}\varphi_{i}(dz)}
{\int_{\mathbb{R}_{+}}e^{(\theta+u^{\ast}(\theta)^{T}\gamma_{i})z}\varphi_{i}(dz)}, \label{varphi-ast}
\end{align}
and $\mathcal{R}(\theta)\leq\theta_{c}$. See \cite{Zhang}. We will use these facts in the proofs of (i) and (ii).

\subsubsection{Locally Uniform Convergence at Exponential Speed in \eqref{eq:converg-exp}}\label{Sec:Conv}

In this section, we prove \eqref{eq:converg-exp}.
First, we infer from \eqref{eq:mart} that with $X(0)=x_0$,
\begin{eqnarray}
\lim_{t \rightarrow \infty} \mathbb{E}\left[e^{\theta V(t)-\eta(\theta)t}\right]
&=&e^{u^{\ast}(\theta)^{T} x_0} \cdot \lim_{t \rightarrow \infty} \mathbb{E}^{Q^*_{\theta}}\left[e^{-u^{\ast}(\theta)^{T}X(t)}\Big| X(0)=x_0\right] \nonumber \\
&=& e^{u^{\ast}(\theta)^{T}x_{0}} \cdot \mathbb{E}^{Q^*_{\theta}}\left[e^{-u^{\ast}(\theta)^{T}X(\infty)}\Big|X(0)=x_{0}\right] \label{eq:limit1}
\end{eqnarray}
where the random vector $X(\infty)$ follows the stationary distribution of $X(\cdot)$ under the measure $Q^*_{\theta}$, and the second equality follows from the ergodicity of $X(\cdot)$, see Proposition~11 in \cite{Zhang}.

We next compute the right hand side of \eqref{eq:limit1} and show it is exactly $\psi$ given in \eqref{psiDefn}.
We can compute that $v(t,x):=\mathbb{E}^{Q^{\ast}_{\theta}}[e^{-\delta^{T}X(t)}|X(0)=x]$
satisfies the Kolmogorov equation, which is a partial integro-differential equation in our context:
\begin{align*}
\frac{\partial v}{\partial t}
&=(b-\beta^{\ast}x)\cdot\nabla v(t,x)
+\frac{1}{2}\sum_{i,j=1}^{d}\left(a_{ij}+\sum_{k=1}^{m}\alpha_{ij}^{k}x_{k}\right)\frac{\partial^{2}v}{\partial x_{i}\partial x_{j}}(x)
\\
&\qquad\qquad
+\sum_{i=1}^{n}(\lambda_{i}^{\ast}+\kappa_{i}^{\ast}\cdot x)
\int_{\mathbb{R}_{+}}[v(t,x+\gamma_{i}z)-v(t,z)]\varphi^{\ast}_{i}(dz),
\end{align*}
with the initial condition $v(0,x)=e^{-\delta^{T}x}$. Since the process $X$ is still affine under the measure $Q^{\ast}_{\theta}$, one can readily obtain that
\begin{equation} \label{eq:u-tx}
v(t,x)=e^{A(t;\theta,\delta)^{T}x+B(t;\theta,\delta)},
\end{equation}
where $A(t;\theta,\delta)$ and $B(t;\theta,\delta)$ are given in \eqref{eq:A} and \eqref{eq:B} respectively.
By ergodicity of $X(\cdot)$, we know that $\mathbb{E}^{Q^*_{\theta}}\left[e^{-u^{\ast}(\theta)^{T}X(\infty)}\Big|X(0)=x_{0}\right]$
is independent of the initial position $x_{0}$
and thus $A(\infty;\theta,u^{\ast}(\theta))=0$
and we conclude that
\begin{equation*}
e^{u^{\ast}(\theta)^{T}x_{0}} \cdot \mathbb{E}^{Q^*_{\theta}}\left[e^{-u^{\ast}(\theta)^{T}X(\infty)}\Big|X(0)=x_{0}\right]
=e^{u^{\ast}(\theta)^{T}x_{0}+B(\infty;\theta,u^{\ast}(\theta))} = \psi(\theta).
\end{equation*}

It remains to show the convergence in \eqref{eq:limit1} is exponentially fast in $t$ and locally uniformly in $\theta$. By \eqref{eq:u-tx}, we need to show $e^{A(t;\theta,u^{\ast}(\theta))^{T}x_{0}+B(t;\theta,u^{\ast}(\theta))+u^{\ast}(\theta)^{T}x_{0}}$ converges exponentially fast to $e^{B(\infty;\theta,u^{\ast}(\theta))+u^{\ast}(\theta)^{T}x_{0}}$. So it suffices to show that
\begin{equation*}
A(t;\theta,u^{\ast}(\theta))^{T} x_0 +B(t;\theta,u^{\ast}(\theta))\rightarrow B(\infty;\theta,u^{\ast}(\theta)),
\end{equation*}
exponentially fast in $t\rightarrow\infty$ locally uniformly in $\theta$.


Since $A(t;\delta)\rightarrow 0$ as $t\rightarrow\infty$,
for sufficiently large $t$, we can infer from \eqref{eq:A} that
\begin{align*}
&\frac{d}{dt}\mathcal{R}(A_{j})(t;\theta,\delta)
\leq
\sum_{i=1}^{n}\mathcal{R}(A_{i})(-\beta_{i,j}^{\ast}+\epsilon_{i,j})+\sum_{i=1}^{n}
\int_{\mathbb{R}_{+}}\mathcal{R}(A^{T})\gamma_{i}ze^{(\theta+u^{\ast}(\theta)^{T}\gamma_{i})z}\varphi_{i}(dz)\kappa_{i,j},
\\
&\frac{d}{dt}\text{Im} (A_{j})(t;\theta,\delta)
\leq
\sum_{i=1}^{n}\mathcal{I}(A_{i})(-\beta_{i,j}^{\ast}+\epsilon_{i,j})+\sum_{i=1}^{n}
\int_{\mathbb{R}_{+}}\text{Im} (A^{T})\gamma_{i}ze^{(\theta+u^{\ast}(\theta)^{T}\gamma_{i})z}\varphi_{i}(dz)\kappa_{i,j},
\end{align*}
where $\epsilon_{i,j}>0$ is sufficiently small, $\mathcal{R}(A_{j})$ and $\text{Im}(A_{j})$ take the real and imaginary parts of $A_{j}$.
Thus, for sufficiently large $t$, we have
\begin{align*}
&\frac{d\mathcal{R}(A)}{dt}\leq\left(-(\beta^{\ast})^{T}+\epsilon^{T}+(\kappa^{\ast})^{T}\mathbb{E}[\gamma Z^{\ast}]^{T}\right)\mathcal{R}(A),
\\
&\frac{d\text{Im}(A)}{dt}\leq\left(-(\beta^{\ast})^{T}+\epsilon^{T}+(\kappa^{\ast})^{T}\mathbb{E}[\gamma Z^{\ast}]^{T}\right) \text{Im}(A),
\end{align*}
where $Z^*=(Z^*_i)$ with the random variable $Z^*_i$ following the distribution $\varphi^{\ast}_{i}(dz)$, and we have used \eqref{kappa-ast}and \eqref{varphi-ast}.
Since the matrix $(\beta^{\ast})^{T}- (\kappa^{\ast})^{T}\mathbb{E}[\gamma Z^*]^{T}$ is positive stable (see the proof of Proposition~11 in \cite{Zhang}),
we deduce that
$A(t;\theta,u^{\ast}(\theta))\rightarrow 0$ exponentially fast in $t$,
which together with \eqref{eq:B} implies that $B(t;\theta,u^{\ast}(\theta))\rightarrow B(\infty;\theta,u^{\ast}(\theta))$
also exponentially fast in $t$. The convergence
holds locally uniformly in $\theta$, and thus we have proved the desired result.

\subsubsection{Infinite divisibility}\label{Sec:Infinite}

In this section, we prove that for $\eta(\cdot)$ given in \eqref{eq:eta-phi}, we have $e^{\eta (\theta)}=\mathbb{E}[e^{\theta Y}]$
for some infinitely divisible, non-constant random variable $Y$.
Recall from \eqref{eq:eta-phi} that
\begin{equation}\label{eta0}
\eta(\theta) =u^{\ast}(\theta)^{T}b
+\sum_{i=1}^{n}\lambda_{i}\left(\mathbb{E}\left[e^{(\theta+u^{\ast}(\theta)^{T}\gamma_{i})Z_i}\right]-1\right).
\end{equation}
It suffices to show that we can find two independent random variables $Y_{1}$ and $Y_{2}$
such that $Y=Y_{1}+Y_{2}$ is (non-constant) infinitely divisible and
\begin{align}
&\mathbb{E}\left[e^{\theta Y_{1}}\right]=
e^{u^{\ast}(\theta)^{T}b}, \label{eqn:Y1}
\\
&\mathbb{E}\left[e^{\theta Y_{2}}\right]=
e^{\sum_{j=1}^{n}\lambda_{j}\left(\mathbb{E}\left[e^{(\theta+u^{\ast}(\theta)^{T}\gamma_{j})Z_{j}}\right]-1\right)}.\label{eqn:Y2}
\end{align}

We first show \eqref{eqn:Y1}. Let $L_{i}(X(0),\lambda,a,b;t)$ and $V(X(0),\lambda,a,b;t)$
denote the process $L_{i}(t)$ and $V(t)$ respectively
with emphasis on its dependence on $X(0)$, $\lambda$, $a$ and $b$.
Note that by the definition of $u^{\ast}(\theta)$ in Equation \eqref{eq:u-ast},
$u^{\ast}(\theta)$ is independent of the parameters $X(0)$, $\lambda$, $a$ and $b$.
Now if $a$, $b=0$ and $\lambda=0$
\footnote{In Part (II) of Assumption \ref{assump1}, we have followed \cite{Zhang} to assume $b_{i}>0$, $1\leq i\leq m$,
so that the large deviations and central limit theorems for $V(t)$ are non-trivial. But we can indeed let $a=0$, $b=0$ and $\lambda=0$
as in part (I) of Assumption \ref{assump1} so that the parameters are \textit{admissible}, see \cite{Zhang} for details.},
then
it follows from Equation~\eqref{eta0} that $\eta(\theta)=0$.
In addition, when $a=0$, $b=0$ and $\lambda=0$,
we get from the definition of $B$ in Equation \eqref{eq:B}
and the initial condition $B(0;\theta, \delta)=0$ that $B(\infty; \theta, u^{\ast}(\theta))=0$.
Then with $\theta$ in the strip $\mathbb{S}_{(0,\theta_{c})}$, we obtain for given deterministic $X(0)$,
\begin{align}
\lim_{t\rightarrow\infty}\mathbb{E}\left[e^{\theta V(X(0),0,0,0;t)}\right]
&=\lim_{t\rightarrow\infty}\mathbb{E}\left[e^{\theta V(X(0),0,0,0;t)}\right]e^{-\eta(\theta)t}
\nonumber
\\
&=e^{u^{\ast}(\theta)^{T}X(0)+B(\infty; \theta, u^{\ast}(\theta))}
= e^{u^{\ast}(\theta)^{T}X(0)} < \infty,
\label{eq:finite}
\end{align}
where the second equality follows from the results in Section~\ref{Sec:Conv}. Moreover, since $V(\cdot,0,0,0;t)$ is monotone in $t$, we then infer from Fatou's lemma that  for real valued $\theta \in (0, \theta_c)$,
$$\mathbb{E}\left[\lim_{t\rightarrow\infty}e^{\theta V(\cdot,0,0,0;t)}\right]
\leq\lim_{t\rightarrow\infty}\mathbb{E}\left[e^{\theta V(\cdot,0,0,0;t)}\right]<\infty.$$
Hence, the random variable $V(\cdot,0,0,0;\infty) :=\lim_{t\rightarrow\infty}V(\cdot,0,0,0;t)<\infty$ with probability one, so is $L_{i}(\cdot,0,0,0;\infty):=\lim_{t\rightarrow\infty}L_{i}(\cdot,0,0,0;t)$, $1\leq i\leq n$. In addition, we obtain that for $\theta$ in the strip $\mathbb{S}_{(0,\theta_{c})}$,
\begin{align}\label{X0Eqn}
\mathbb{E}\left[e^{\theta\sum_{i=1}^{n}L_{i}(X(0),0,0,0;\infty)}\right]
&=\mathbb{E}\left[e^{\theta V(X(0),0,0,0;\infty)}\right]
\\
&=\lim_{t\rightarrow\infty}\mathbb{E}\left[e^{\theta V(X(0),0,0,0;t)}\right]
\nonumber
\\
&= e^{u^{\ast}(\theta)^{T}X(0)},
\nonumber
\end{align}
where we have used \eqref{eq:finite} and the dominated convergence theorem.
Now we define
\begin{equation*}
Y_{1}=\sum_{i=1}^{n}L_{i}(b,0,0,0;\infty),
\qquad\text{in distribution}.
\end{equation*}
Then from Equation~\eqref{X0Eqn}, we get Equation~\eqref{eqn:Y1}.

To establish \eqref{eqn:Y2}, we define
\begin{equation*}
Y_{2}=\sum_{j=1}^{n}Y_{2}^{j},\qquad\text{in distribution},
\end{equation*}
where $Y_{2}^{j}$ are independent and
\begin{equation*}
Y_{2}^{j}=\sum_{k=1}^{N_{\lambda_{j}}}({Z}_{j,k}+X_{j,k}).
\end{equation*}
Here
$Z_{j, k}$ has the same distribution as $Z_{j}$ and
$Z_{j, k}$ are i.i.d., independent of $X_{j, k}$
where $X_{j, k}$ are i.i.d. and distributed as $\sum_{i=1}^{n}L_{i}(\gamma_{j}Z_{j},0,0,0;\infty)$
and finally $N_{\lambda_{j}}$ is a Poisson random variable with mean $\lambda_{j}$
and independent of $Z_{j,k}$ and $X_{j,k}$.
Then, from Equation~\eqref{X0Eqn}, we get
\begin{align*}
\mathbb{E}\left[e^{\theta Y_{2}}\right]
&=\exp\left\{\sum_{j=1}^{n}\lambda_{j}\left(\mathbb{E}\left[e^{\theta(Z_{j}+\sum_{i=1}^{n}L_{i}(\gamma_{j} \hat Z_{j},0,0,0;\infty))}\right]-1\right)\right\}
\\
&=\exp\left\{\sum_{j=1}^{n}\lambda_{j}\left(\mathbb{E}\left[e^{(\theta+u^{\ast}(\theta)^{T}\gamma_{j})Z_{j}}\right]-1\right)\right\},
\end{align*}
where $\hat Z_j$ is an independent copy of $Z_j$.
This verifies Equation~\eqref{eqn:Y2}.

Hence, we have defined a random variable $Y=Y_{1}+Y_{2}$ so that $\mathbb{E}[e^{\theta Y}]=e^{\eta(\theta)}$.
To see that $Y$ is infinitely divisible, we can write it as i.i.d. $m$ copies of the same random variable
with parameters $b$ replaced by $b/m$ and $\lambda$ replaced by $\lambda/m$.
That is, we can define $Y=\sum_{\ell=1}^{m}Y^{\ell,(m)}$, where
$Y^{\ell,(m)}$ are i.i.d. with the same distribution as $Y_{1}^{(m)}+Y_{2}^{(m)}$,
where $Y_{1}^{(m)}$ and $Y_{2}^{(m)}$ are independent and
\begin{equation*}
Y_{1}^{(m)}=\sum_{i=1}^{n}L_{i}(b/m,0,0,0;\infty),
\qquad\text{in distribution},
\end{equation*}
and
\begin{equation*}
Y_{2}^{(m)}=\sum_{j=1}^{n}\sum_{k=1}^{N_{\lambda_{j}/m}}(Z_{j, k}+X_{j, k}),\qquad\text{in distribution},
\end{equation*}
so that
\begin{align*}
\mathbb{E}\left[e^{\theta\sum_{\ell=1}^{m}Y^{(\ell,m)}}\right]
&=\left(\mathbb{E}\left[e^{\theta\left(Y_{1}^{(m)}+Y_{2}^{(m)}\right)}\right]\right)^{m}
\\
&=\left(e^{u^{\ast}(\theta)^{T}\frac{b}{m}}e^{\sum_{j=1}^{n}\frac{\lambda_{j}}{m}\left(\mathbb{E}\left[e^{(\theta+u^{\ast}(\theta)^{T}\gamma_{j})Z_{j}}\right]-1\right)}\right)^{m}
\\
&=e^{u^{\ast}(\theta)^{T}b+\sum_{j=1}^{n}\lambda_{j}\left(\mathbb{E}\left[e^{(\theta+u^{\ast}(\theta)^{T}\gamma_{j})Z_{j}}\right]-1\right)}
=\mathbb{E}\left[e^{\theta Y}\right].
\end{align*}
This completes the proof of the infinite divisibility of $Y$.

Finally, let us show that $Y$ is lattice distributed if and only if the random variables in
the set
\begin{equation*}
\mathcal{Z}:=\left\{\sum_{i=1}^{n}\sum_{j=1}^{n_{i}}Z_{ij}, \text{where $n_{i}\in\mathbb{N}\cup\{0\}$
and $Z_{ij}$ are i.i.d. distributed as $Z_{i}$}\right\},
\end{equation*}
are supported on $\mathbf{b}\mathbb{N}\cup\{0\}$ for some $\mathbf{b}>0$.
First, we prove the `if' part. If the random variables in $\mathcal{Z}$ are supported on $\mathbf{b}\mathbb{N}\cup\{0\}$ for some $\mathbf{b}>0$,
then so is each $Z_{ij}$ and by the definition of $Y$ above, $Y$ is lattice distributed.
Next, we prove the `only if' part.
If $Y$ is lattice distributed, then $Y$ is supported on $\mathbf{a}+\mathbf{b}\mathbb{Z}$
for some $\mathbf{a}\in\mathbb{R}$ and $\mathbf{b}>0$. By the definition of $Y$ above,
we infer that each random variable in $\mathcal{Z}$
is also supported on $\mathbf{a}+\mathbf{b}\mathbb{Z}$.
Therefore, $Z_{ij}$ is also supported on $\mathbf{a}+\mathbf{b}\mathbb{Z}$.
If $Z_{ij}=\mathbf{a}+\mathbf{b}z_{ij}$
for some $z_{ij}\in\mathbb{Z}$, then $Z_{i1}+Z_{i2}=2\mathbf{a}+\mathbf{b}(z_{i1}+z_{i2})$ also belongs
to $\mathbf{a}+\mathbf{b}\mathbb{Z}$, which implies that $\mathbf{a}\in\mathbf{b}\mathbb{Z}$,
and hence we can take $\mathbf{a}=0$. Finally, if each $Z_{ij}$ is supported on $\mathbf{b}\mathbb{Z}$,
then so are the random variables in the set $\mathcal{Z}$.


\section{Numerical experiments} \label{sec:numerical}

We can develop explicit approximations for large deviation probabilities and tail expectations associated with affine point processes, by truncating the asymptotic expansions in Theorem~\ref{thm:preciseLDP}. 
In this section, we illustrate numerically the performance of our approximations
for two test cases on three-dimensional affine point processes where the model and parameters are taken from \cite{Zhang}. Recall the definition of affine point processes in Section~\ref{sec:app}. 

The model specification of the three-dimensional affine point process $(L_1, L_2, L_3)$ is as follows.
The components of $X=(X_1, X_2, X_3)$ satisfy
\begin{eqnarray*}
dX_j(t) = (b_j - \beta_j X_j(t)) dt + \sigma_j \sqrt{X_j(t)} dW_j(t) + \sum_{i=1}^{3} \gamma_i dL_i(t), \quad \text{$j=1, 2, 3$,}
\end{eqnarray*}
where $(b_1, b_2, b_3)=(6,6.1,6.2),  (\beta_1, \beta_2, \beta_3) =  (2,2.1,2.2), (\sigma_1, \sigma_2, \sigma_3)=(0.5,0.6,0.7)$, and $(\gamma_{1}, \gamma_{2}, \gamma_{3})=(0.2,0.3,0.4)$. The jump intensity is $\Lambda_j(X(t)) = \kappa_j X_j(t)$ for each $j$ with $(\kappa_1, \kappa_2, \kappa_3)=(1,1.1,1.2)$.
The two test cases we consider differ only in the distribution of marks $Z_i$. In the first test case, $Z_i$ is assumed to be constant one for each $i$. 
In the second test case, $Z_i$ is assumed to follow an exponential distribution with mean one. 

For illustrations, we focus on computing $\mathbb{P}(V(t) \ge x t)$ and $\mathbb{E}[(V(t)- xt)^+]$ for different values of $x$ and $t$ for the two test cases,
where $V(t) = \sum_{i=1}^3 L_i(t)$ and the notation $y^+ := y \cdot 1_{y \ge 0}$ for a real number $y$. The quantities we compute can be useful for applications where affine point processes
are relevant models to capture the clustering or self- and mutually-exciting effects of event arrivals, e.g., in portfolio credit risk where corporate defaults exhibit clustering. In this context, the components of the three-dimensional affine point process $(L_1, L_2, L_3)$ model the portfolio losses triggered by defaults of three firm types, and $V(t)$ represents the cumulative portfolio loss up to time $t$, see, e.g., \cite{Errais}. Hence, $\mathbb{P}(V(t) \ge x t)$ measures the probability of the cumulative portfolio loss exceeds level $xt$ and $\mathbb{E}[(V(t)- xt)^+]$ quantifies the expected exceedance of the cumulative portfolio loss to the level $xt$.

In Tables \ref{3d-lattice} and \ref{3d-exp}, we compare the results from first
order approximations and those from Monte Carlo simulations. 
The first order approximations refer to truncating the
asymptotic series in \eqref{eq:LtRt}--\eqref{eq:ES2} to retain the first two terms. In particular, for the first test case with $Z_i = 1$ for each $i$, we use \eqref{eq:LtRt} and \eqref{eq:ES}; For the second test case with $Z_i$ following exponential distributions, we use \eqref{eq:LtRt2} and \eqref{eq:ES2}.
When computing the expansion
coefficients (e.g. $\hat c_k, \hat d_k$), we need to compute the derivatives of functions $\eta$ and $\psi$ in Theorem~\ref{thm:app-mod}, see Appendix~\ref{sec:affine-coeff} for details. This requires us to numerically solve ODEs 
and we use Runge-Kutta methods.
We do not report numerical results from zero-th order approximations as the errors are not small.
For the simulations, as plain Monte Carlo is very slow in computing small probabilities and tail expectations,
we use the importance sampling algorithm in \cite{Zhang} with sample size 100,000.
The computations are performed using Python on a Mac computer with Processor 1.4 GHz Intel Core i5.

We can observe from Tables \ref{3d-lattice} and \ref{3d-exp} that the relative errors for the first order approximations compared with Monte Carlo simulations
(via importance sampling) become small when $t$ becomes large. We also experiment the
second order approximation and find similar patterns, but adding additional higher order terms does not necessarily improve the approximation as an asymptotic series is usually a non-convergent series. Nevertheless, the analytical approximations we develop are faster to evaluate than Monte Carlo simulation and they can yield accurate numerical results in the large time regime.
Hence, these approximations can be alternatives or complementary tools to simulation which is computer resource extensive.

\begin{table}[htbp]
\center
\small
    \begin{tabular}{cccccc}
           \multicolumn{1}{l}{$t$} & \multicolumn{1}{l}{$P$(IS) [95\% CI.]} & \multicolumn{1}{l}{$P$(1-Order) [R.E.]} & \multicolumn{1}{l}{$E$(IS) [95\% CI.]} & \multicolumn{1}{l}{$E$(1-Order) [R.E.]} \\

    \hline
    &&$x=25$\\
  		   10    & 2.66E-03 $[\pm 1.53\%]$& 1.47E-03 $[-44.7\%]$& 2.56E-02 $[\pm 1.31\%]$& -3.50E-02 $[-237.\%]$ \\
  		   20    & 2.64E-04 $[\pm 1.51\%]$& 2.39E-04 $[-9.47\%]$& 3.11E-03 $[\pm 1.18\%]$& 1.70E-03 $[-45.3\%]$\\
  		   30    & 2.37E-05 $[\pm 1.59\%]$& 2.31E-05 $[-2.53\%]$& 3.03E-04 $[\pm 1.19\%]$& 2.52E-04 $[-16.8\%]$\\
  		   50    & 2.03E-07 $[\pm 1.70\%]$& 2.01E-07 $[-0.99\%]$& 2.81E-06 $[\pm 1.23\%]$& 2.65E-06 $[-5.69\%]$\\
  		   100   & 1.52E-12 $[\pm 1.98\%]$& 1.53E-12 $[+0.66\%]$& 2.22E-11 $[\pm 1.39\%]$& 2.23E-11 $[+0.45\%]$ \\
  		   200   & 1.11E-22 $[\pm 2.34\%]$& 1.11E-22 $[+0.00\%]$& 1.68E-21 $[\pm 1.63\%]$& 1.68E-21 $[+0.00\%]$\\
  		   300   & 9.04E-33 $[\pm 2.54\%]$& 9.03E-33 $[-0.11\%]$& 1.39E-31 $[\pm 1.79\%]$& 1.38E-33 $[-0.72\%]$\\
    \hline
    &&$x=30$\\
		   10    & 1.39E-05 $[\pm 2.13\%]$& 1.23E-05 $[-35.3\%]$& 1.08E-04 $[\pm 1.71\%]$& 2.48E-05 $[-77.0\%]$\\
		   20    & 3.18E-08 $[\pm 2.12\%]$& 3.29E-08 $[+17.8\%]$& 2.92E-07 $[\pm 1.59\%]$& 2.70E-07 $[-7.53\%]$\\
		   30    & 6.62E-11 $[\pm 2.24\%]$& 6.84E-11 $[+3.32\%]$& 6.33E-10 $[\pm 1.62\%]$& 6.36E-10 $[+0.47\%]$\\
		   50    & 2.85E-16 $[\pm 2.43\%]$& 2.92E-16 $[+2.46\%]$& 2.86E-15 $[\pm 1.75\%]$& 2.92E-15 $[+2.10\%]$\\
		   100   & 1.23E-29 $[\pm 2.82\%]$& 1.25E-29 $[+1.62\%]$& 1.28E-28 $[\pm 1.99\%]$& 1.31E-28 $[+2.34\%]$\\
		   200   & 2.95E-56 $[\pm 3.31\%]$& 2.95E-56 $[+0.00\%]$& 3.08E-55 $[\pm 2.33\%]$& 3.14E-55 $[+1.94\%]$\\
		   300   & 7.82E-83 $[\pm 3.67\%]$& 7.88E-83 $[+0.77\%]$& 8.26E-82 $[\pm 2.57\%]$& 8.32E-82 $[+0.73\%]$\\
    \end{tabular}%
  \caption{\small{First order approximations are compared with simulations using importance sampling (IS) when $Z_i \equiv 1$ for $i=1,2 ,3$. $P$ stands for $\mathbb{P}(V(t) \ge xt)$ and $E$ stands for $\mathbb{E}[(V(t) - xt)^+]$ with $V(t) = \sum_{i=1}^3 L_i(t)$. CI stands for the confidence interval and R.E. stands for the relative errors of approximations compared with simulation results.}} \label{3d-lattice}
\end{table}

\begin{table}[htbp]
  \centering
\small
    \begin{tabular}{ccccc}
    \multicolumn{1}{l}{$t$} & \multicolumn{1}{l}{$P$(IS) [95\% CI.]} & \multicolumn{1}{l}{$P$(1-Order) [R.E.]} & \multicolumn{1}{l}{$E$(IS) [95\% CI.]} & \multicolumn{1}{l}{$E$(1-Order) [R.E.]} \\
\hline
	&&$x=25$\\
		10    & 1.99E-02 $[\pm 1.24\%]$& -9.04E-04 $[-105.\%]$& 3.51E-01 $[\pm 1.09\%]$& -1.39E+00 $[-496.\%]$\\
		20    & 5.45E-03 $[\pm 1.26\%]$& 3.75E-03 $[-31.2\%]$& 1.19E-01 $[\pm 1.00\%]$& -2.63E-02 $[-122.\%]$\\
		30    & 1.47E-03 $[\pm 1.32\%]$& 1.23E-03 $[-16.3\%]$& 3.50E-02 $[\pm 1.00\%]$& 1.50E-02 $[-57.1\%]$\\
		50    & 1.11E-04 $[\pm 1.42\%]$& 1.02E-04 $[-8.11\%]$& 2.87E-03 $[\pm 1.04\%]$& 2.20E-03 $[-23.3\%]$\\
		100   & 1.96E-07 $[\pm 1.64\%]$& 1.88E-07 $[-4.08\%]$& 5.48E-06 $[\pm 1.16\%]$& 5.05E-06 $[-7.85\%]$\\
		200   & 7.61E-13 $[\pm 1.95\%]$& 7.55E-13 $[-0.79\%]$& 2.24E-11 $[\pm 1.35\%]$& 2.19E-11 $[-2.23\%]$\\
		300   & 3.39E-18 $[\pm 2.13\%]$& 3.37E-18 $[-0.59\%]$& 1.02E-16 $[\pm 1.49\%]$& 1.00E-16 $[-1.96\%]$\\
	\hline
	&&$x=30$\\
		10    & 1.10E-03 $[\pm 1.61\%]$& 6.60E-04 $[-40.0\%]$& 1.71E-02 $[\pm 1.29\%]$& -8.75E-03 $[-151.\%]$\\
		20    & 4.29E-05 $[\pm 1.66\%]$& 3.74E-05 $[-12.8\%]$& 7.67E-04 $[\pm 1.24\%]$& 4.81E-04 $[-37.3\%]$\\
		30    & 1.57E-06 $[\pm 1.76\%]$& 1.47E-06 $[-6.37\%]$& 3.01E-05 $[\pm 1.28\%]$& 2.46E-05 $[-18.3\%]$\\
		50    & 2.23E-09 $[\pm 1.95\%]$& 2.17E-09 $[-2.69\%]$& 4.46E-08 $[\pm 1.39\%]$& 4.14E-08 $[-7.17\%]$\\
		100   & 2.03E-16 $[\pm 2.26\%]$& 2.01E-16 $[-0.99\%]$& 4.24E-15 $[\pm 1.60\%]$& 4.14E-15 $[-2.36\%]$\\
		200   & 2.14E-30 $[\pm 2.70\%]$& 2.17E-30 $[+1.40\%]$& 4.57E-29 $[\pm 1.88\%]$& 4.63E-29 $[+1.31\%]$\\
		300   & 2.64E-44 $[\pm 2.97\%]$& 2.66E-44 $[+0.76\%]$& 5.70E-43 $[\pm 2.08\%]$& 5.72E-43 $[+0.35\%]$\\
    \end{tabular}%

\caption{\small{First order approximations are compared with simulations using importance sampling (IS) when $Z_i$ is exponentially distributed with mean one for $i=1,2 ,3$. $P$ stands for $\mathbb{P}(V(t) \ge xt)$ and $E$ stands for $\mathbb{E}[(V(t) - xt)^+]$ with $V(t) = \sum_{i=1}^3 L_i(t)$. CI stands for the confidence interval and R.E. stands for the relative errors of approximations compared with simulation results.}} \label{3d-exp}
\end{table}

\section*{Acknowledgements}
We thank Jim Dai, Jayaram Sethuraman, S. R. S. Varadhan, Xunyu Zhou for helpful comments,
and Shujian Liao, Hongyi Jiang for their assistance on numerical experiments.
Xuefeng Gao acknowledges support from Hong Kong RGC Grants 24207015 and 14201117.
Lingjiong Zhu is grateful to the support from NSF Grant DMS-1613164.

\clearpage


\newpage
\appendix

\section{Proofs of Propositions~\ref{prop:modphi-lattice} and \ref{prop:modphi-nonlattice-gz}} \label{appendixA}
This Appendix collects the proofs of Propositions~\ref{prop:modphi-lattice} and \ref{prop:modphi-nonlattice-gz}.
\subsection{Proof of Proposition~\ref{prop:modphi-lattice}}
\begin{proof}
We follow the proof of Theorem 3.4. (Theorem~\ref{thm:3} in our paper) and Remark 3.7 in \cite{modphi}.
Before we proceed, since we assume that $|g(x)|\leq\bar{a}e^{\bar hx}$ for some $\bar h<h$ and $\bar{a}>0$,
we have
\begin{equation*}
\mathbb{E}\left[g(X_{n}-t_{n}x)1_{X_{n}\geq t_{n}x}\right]
\leq\bar{a}\mathbb{E}\left[e^{\bar{h} X_{n}}\right]<\infty,
\end{equation*}
since $\bar{h}<h\leq d$, where we recall that $\mathbb{E}[e^{zX_{n}}]$
exists on $\mathbb{S}_{(c,d)}$.
Moreover, since we assume that $|g(x)|\leq\bar{a}e^{\bar hx}$ for some $\bar h<h$ and $\bar{a}>0$,
we can check that $\hat{c}_{k}$ whose definition
involves the summation of $g(q)e^{-qh}(-q)^{m}$ over $q$ from $0$ to $\infty$ is well-defined.

Next, let us define:
\begin{align}
H_{n}(w)&:=\sum_{k=0}^{\infty}g(k)e^{-kh}e^{-k\frac{iw}{\sqrt{t_{n}\eta''(h)}}}
\cdot\psi\left(h+\frac{iw}{\sqrt{t_{n}\eta''(h)}}\right)
\nonumber \\
&\qquad\qquad\qquad\qquad\cdot
e^{t_{n}\left(\eta\left(h+\frac{iw}{\sqrt{t_{n}\eta''(h)}}\right)
-\eta(h)-\eta'(h)\frac{iw}{\sqrt{t_{n}\eta''(h)}}+\frac{w^{2}}{2t_{n}}\right)}
\nonumber \\
&=\sum_{k=0}^{\infty}g(k)e^{-kh}e^{-k\frac{iw}{\sqrt{t_{n}\eta''(h)}}}
\cdot\psi\left(h+\frac{iw}{\sqrt{t_{n}\eta''(h)}}\right)
\nonumber \\
&\qquad\qquad\qquad\qquad\cdot
e^{\frac{-w^{2}}{\eta''(h)}\sum_{\ell=1}^{\infty}\frac{\eta^{(\ell+2)}(h)}{(\ell+2)!}
\left(\frac{iw}{\sqrt{t_{n}\eta''(h)}}\right)^{\ell}}.
\label{eq:gn-formula}
\end{align}
Since we assume that $|g(x)|\leq\bar{a}e^{\bar hx}$ for some $\bar h<h$ and $\bar{a}>0$, $H_{n}(w)$
is well-defined. By Taylor expansion,
we define $\beta_{j}(w)$ via the expansion:
\begin{equation*}
H_{n}(w)=\sum_{k=0}^{2v-1}\frac{\beta_{k}(w)}{(t_{n})^{k/2}}
+O(t_{n}^{-v}).
\end{equation*}
Following the proof of Theorem 3.4. and Remark 3.7. in \cite{modphi},
we can readily obtain
\begin{equation*}
\mathbb{E}\left[g(X_{n}-t_{n}x)1_{X_{n}\geq t_{n}x}\right]
=\frac{e^{-t_{n}F(x)}}{\sqrt{2\pi t_{n}\eta''(h)}}   \left(\hat c_0 + \frac{\hat c_1}{t_n} + \ldots + \frac{\hat c_{v-1}}{t_n^{v-1}} + O\left(\frac{1}{t_n^v}\right) \right),
\end{equation*}
where
\begin{equation} \label{eq:hatck-beta}
\hat{c}_{k}=\int_{\mathbb{R}}\beta_{2k}(w)\frac{e^{-\frac{w^{2}}{2}}}{\sqrt{2\pi}}dw.
\end{equation}

We next compute the coefficients $\hat{c}_{k}$. As $\beta_{j}(w)$ is defined via the expansion of $g_n(w)$, we proceed to expand the three terms in \eqref{eq:gn-formula}.

(1) First, we expand the term $\sum_{k=0}^{\infty}g(k)e^{-kh}e^{-k\frac{iw}{\sqrt{t_{n}\eta''(h)}}}$.
We can compute that
\begin{align*}
\sum_{k=0}^{\infty}g(k)e^{-kh}e^{-k\frac{iw}{\sqrt{t_{n}\eta''(h)}}}
&=\sum_{k=0}^{\infty}g(k)e^{-kh}\sum_{m=0}^{\infty}\frac{(-k)^{m}}{m!}
\left(\frac{iw}{\sqrt{t_{n}\eta''(h)}}\right)^{m}
\\
&=\sum_{m=0}^{\infty}\left[\sum_{q=0}^{\infty}g(q)e^{-qh}\frac{(-q)^{m}}{m!}\right]
\left(\frac{iw}{\sqrt{t_{n}\eta''(h)}}\right)^{m}.
\end{align*}

(2) Next, as $\psi$ is analytic, we expand the term $\psi\left(h+\frac{iw}{\sqrt{t_{n}\eta''(h)}}\right)$ as
\begin{equation*}
\psi\left(h+\frac{iw}{\sqrt{t_{n}\eta''(h)}}\right)
=\sum_{\ell=0}^{\infty}\frac{\psi^{(\ell)}(h)}{\ell!}
\left(\frac{iw}{\sqrt{t_{n}\eta''(h)}}\right)^{\ell}.
\end{equation*}

(3) Finally, let us expand the term $e^{\frac{-w^{2}}{\eta''(h)}\sum_{\ell=1}^{\infty}\frac{\eta^{(\ell+2)}(h)}{(\ell+2)!}
\left(\frac{iw}{\sqrt{t_{n}\eta''(h)}}\right)^{\ell}}$.
Let us define $f(x)=e^{-x}$ and
\begin{equation*}
\tilde f(x)=\frac{w^{2}}{\eta''(h)}\sum_{k=1}^{\infty}\frac{\eta^{(k+2)}(h)}{(k+2)!}x^{k}.
\end{equation*}
Then
\begin{equation*}
f\left(\tilde f(x)\right)=\sum_{n=0}^{\infty}\frac{d^{n}}{dx^{n}}f\left(\tilde f (0)\right)\frac{x^{n}}{n!},
\end{equation*}
where we can compute that  $\tilde f(0)=0$ and
for every $j\in\mathbb{N}$,
\begin{equation*}
\tilde f^{(j)}(0)=\frac{w^{2}}{\eta''(h)}\frac{\eta^{(j+2)}(h)}{(j+2)(j+1)},
\end{equation*}
which implies that by Fa\`{a} di Bruno's formula (see Appendix~\ref{sec:Faa})
\begin{equation} \label{eq:fadi}
f\left(\tilde f(x)\right)=\sum_{n=0}^{\infty}
\sum_{\mathcal{S}_{n}}\frac{n!(-1)^{m_{1}+\cdots+m_{n}}}{m_{1}!1!^{m_{1}}m_{2}!2!^{m_{2}}\cdots m_{n}!n!^{m_{n}}}
\cdot
\prod_{j=1}^{n}\left(\frac{w^{2}}{\eta''(h)}\frac{\eta^{(j+2)}(h)}{(j+2)(j+1)}\right)^{m_{j}}
\frac{x^{n}}{n!}.
\end{equation}
Hence,
\begin{align*}
&e^{\frac{-w^{2}}{\eta''(h)}\sum_{\ell=1}^{\infty}\frac{\eta^{(\ell+2)}(h)}{(\ell+2)!}
\left(\frac{iw}{\sqrt{t_{n}\eta''(h)}}\right)^{\ell}}
\\
&=\sum_{n=0}^{\infty}
\sum_{\mathcal{S}_{n}}\frac{(-1)^{m_{1}+\cdots+m_{n}}}{m_{1}!1!^{m_{1}}m_{2}!2!^{m_{2}}\cdots m_{n}!n!^{m_{n}}}
\cdot
\prod_{j=1}^{n}\left(\frac{w^{2}}{\eta''(h)}\frac{\eta^{(j+2)}(h)}{(j+2)(j+1)}\right)^{m_{j}}
\left(\frac{iw}{\sqrt{t_{n}\eta''(h)}}\right)^{n}.
\end{align*}

Hence, by (1), (2) and (3), we conclude that
\begin{align*}
\beta_{k}(w)
&=\sum_{m+\ell+n=k}
\sum_{q=0}^{\infty}g(q)e^{-qh}\frac{(-q)^{m}}{m!}
\cdot
\frac{\psi^{(\ell)}(h)}{\ell!}
\\
&\qquad
\cdot
\sum_{\mathcal{S}_{n}}\frac{(-1)^{m_{1}+\cdots+m_{n}}}{m_{1}!1!^{m_{1}}m_{2}!2!^{m_{2}}\cdots m_{n}!n!^{m_{n}}}
\\
&\qquad\qquad\qquad\qquad
\cdot
\prod_{j=1}^{n}\left(\frac{1}{\eta''(h)}\frac{\eta^{(j+2)}(h)}{(j+2)(j+1)}\right)^{m_{j}} \cdot
\frac{(i)^{k}w^{k+2(m_{1}+\cdots+m_{n})}}{(\eta''(h))^{k/2}}.
\end{align*}
By the property of standard normal random variable, we have
$\int_{-\infty}^{\infty}w^{m}\cdot\frac{e^{-\frac{w^{2}}{2}}}{\sqrt{2\pi}}dw
=(m-1)!!$
if $m$ is even, and $0$ if $m$ is odd.
Hence, we obtain from \eqref{eq:hatck-beta} the formula of $\hat c_k$ given in \eqref{eq:hatck}. The proof is therefore complete.
\end{proof}

\subsection{Proof of Proposition~\ref{prop:modphi-nonlattice-gz}}

To prove Proposition~\ref{prop:modphi-nonlattice-gz}, it suffices to prove Part (ii).
Before we proceed to the proof,
let us first show that $\mathbb{E}[g(X_{n}-t_{n}x)1_{X_{n}\geq t_{n}x}]$ and $\hat{d}_{k}$
are well-defined and finite.
Since we assume that $g(x)=\sum_{k=0}^{\infty}g_{k}x^{k+\Delta}$,
where $\Delta\in[0,1)$ and $g_{k}\leq\bar{a}\frac{\bar{h}^{k}}{k!}$
for some $\bar{a}>0$ and $0<\bar{h}<h$ for every $k\in\mathbb{N}$,
we infer that $g(x)\leq\bar{a}e^{\bar{h}x}x^{\Delta}$ for any $x\geq 0$,
and hence there exist some $\bar{a}_{0}>0$
and $\bar{h}<\bar{h}_{0}<h$ such that $g(x)\leq\bar{a}_{0}e^{\bar{h}_{0}x}$ for any $x\geq 0$.
Therefore
\begin{equation*}
\mathbb{E}\left[g(X_{n}-t_{n}x)1_{X_{n}\geq t_{n}x}\right]
\leq\bar{a}_{0}\mathbb{E}\left[e^{\bar{h}_{0}X_{n}}\right]<\infty,
\end{equation*}
since $\bar{h}_{0}<h\leq d$, where we recall that $\mathbb{E}[e^{zX_{n}}]$
exists on $\mathbb{S}_{(c,d)}$.
Moreover, we assume that $g(x)=\sum_{k=0}^{\infty}g_{k}x^{k+\Delta}$,
where $\Delta\in[0,1)$ and $g_{k}\leq\bar{a}\frac{\bar{h}^{k}}{k!}$
for some $\bar{a}>0$ and $0<\bar{h}<h$ for every $k\in\mathbb{N}$,
we can check that $\hat{d}_{k}$ whose definition
involves the summation of $g_{q}\frac{\Gamma(k+\Delta+q+(m_{1}+\cdots+m_{\ell})-m+1)}{h^{\Delta+q+1}}$
over $q$ from $0$ to $\infty$ is well-defined.

We next introduce a lemma. It calculates asymptotic expansions for certain Gaussian integrals which will be used in our computations in proving
Proposition~\ref{prop:modphi-nonlattice-gz}. The proof of this lemma will be deferred to the end of this section.

\begin{lemma}\label{yintegral2}
Fix any $s\geq 0$. We have as $t_{n}\rightarrow\infty$,
\begin{equation*}
\int_{y=0}^{\infty}y^{s}e^{-\frac{(y+h\sqrt{t_{n}\eta''(h)})^{2}}{2}}dy
\sim e^{-h^{2}t_{n}\eta''(h)\frac{1}{2}}
\sum_{k=0}^{\infty}\frac{\Gamma(k+s+1)a_{k}}{(h^{2}t_{n}\eta''(h))^{\frac{s}{2}+\frac{1}{2}+k}},
\end{equation*}
where $(a_{k})_{k=0}^{\infty}$ are determined recursively as:
\begin{equation*}
a_{k}=(-1)^{k}-\sum_{j=0}^{k-1}\frac{(j+s)_{k-j}}{(k-j)!2^{k-j}}a_{j},
\end{equation*}
where $(\cdot)_{i}$ is the Pochhammer symbol.
\end{lemma}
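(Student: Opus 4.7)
The plan is to reduce this to a standard Watson's lemma problem by rescaling. Setting $T := h\sqrt{t_n \eta''(h)}$ makes the $t_n \to \infty$ asymptotics correspond to $T \to \infty$, and the claim takes the form
\begin{equation*}
\int_0^\infty y^s e^{-(y+T)^2/2}\,dy \;\sim\; e^{-T^2/2}\sum_{k\geq 0}\frac{\Gamma(k+s+1)\,a_k}{T^{s+1+2k}}.
\end{equation*}

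First I would complete the square, $(y+T)^2/2 = T^2/2 + yT + y^2/2$, factor out $e^{-T^2/2}$, and substitute $w = Ty$ to obtain
\begin{equation*}
\int_0^\infty y^s e^{-(y+T)^2/2}\,dy \;=\; \frac{e^{-T^2/2}}{T^{s+1}}\int_0^\infty w^s e^{-w} e^{-w^2/(2T^2)}\,dw.
\end{equation*}
This displays the integral as a Watson-type Laplace integral with small parameter $1/T^2$: the perturbing factor $e^{-w^2/(2T^2)} \to 1$ while the integrand concentrates at $w = O(1)$, so the leading contribution is $\Gamma(s+1)/T^{s+1}$, matching $a_0 = 1$ from the recursion.

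Next I would Taylor-expand $e^{-w^2/(2T^2)} = \sum_{k\geq 0}(-1)^k w^{2k}/(2^k k! T^{2k})$ and integrate against $w^s e^{-w}$ term by term, using $\int_0^\infty w^{s+2k} e^{-w}\,dw = \Gamma(s+2k+1)$. Rigor follows from the elementary Taylor bound
\begin{equation*}
\Bigl|e^{-x}-\sum_{k=0}^{N-1}\tfrac{(-x)^k}{k!}\Bigr|\leq \tfrac{x^N}{N!},\qquad x\geq 0,
\end{equation*}
applied with $x = w^2/(2T^2)$: integrating the remainder against $w^s e^{-w}$ yields a uniform $O(T^{-2N})$ truncation error, producing the asymptotic series as claimed.

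Matching the resulting coefficients against $\Gamma(k+s+1)a_k$ determines each $a_k$ uniquely; using the identity $\Gamma(s+2k+1) = (s+k+1)_k\,\Gamma(s+k+1)$ gives the closed form $a_k = (-1)^k (s+k+1)_k/(2^k k!)$. The stated recursion can then be verified by induction, by rearranging the convolution on the right-hand side via standard Pochhammer-symbol manipulations. The main obstacle I anticipate is not the asymptotic analysis (which is classical Watson's lemma with explicit remainder control) but the combinatorial bookkeeping needed to reconcile the explicit formula with the precise recursive form; one can also approach this by substituting $(y+T)^2 = T^2 + 2v$, which rewrites the integral as $e^{-T^2/2}\int_0^\infty e^{-v}(1+\sqrt{1+2v/T^2})^{-s}(1+2v/T^2)^{-1/2}\,dv$ multiplied by a scalar factor, and expanding the Jacobian-type factor directly yields the recursion.
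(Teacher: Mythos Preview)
Your approach is correct and genuinely different from the paper's. The paper rescales $y\mapsto Ty$ to write the integral as $T^{s+1}\int_0^\infty y^s e^{-\lambda(y+1)^2/2}\,dy$ with $\lambda=T^2$, and then invokes the general Laplace-method machinery (Lemma~\ref{Laplace'sThm}) with $p(y)=(y+1)^2/2$, $q(y)=y^s$. In that framework the coefficients $a_k$ arise by definition from the series inversion
\[
\frac{y^s}{1+y}\;=\;\sum_{k\ge 0} a_k\,v^{k+s},\qquad v=y+\tfrac{1}{2}y^2,
\]
and matching powers of $y$ in $\frac{1}{1+y}=\sum_j a_j y^j(1+y/2)^{j+s}$ directly produces the stated recursion. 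Your route---complete the square, substitute $w=Ty$, and Taylor-expand $e^{-w^2/(2T^2)}$---is more elementary and gives a clean closed form $a_k=(-1)^k(s+k+1)_k/(2^k k!)$ without any black-box Laplace lemma; the price is that the recursion no longer falls out for free and must be checked as a separate combinatorial identity. Your last parenthetical suggestion, substituting $(y+T)^2=T^2+2v$ and expanding the resulting Jacobian factor, is in fact exactly the paper's Laplace-method substitution and would deliver the recursion in the same way the paper does.

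The only soft spot in the proposal is that you defer the recursion check to ``standard Pochhammer-symbol manipulations'' without carrying it out. This is a genuine identity (equivalent to a Vandermonde-type convolution for binomial coefficients with general exponent), so it is provable, but you should be careful with conventions: the recursion is the expansion of $(1+y/2)^{j+s}$, which involves \emph{falling} products $(j+s)(j+s-1)\cdots$, whereas the Pochhammer symbol as defined in the paper's footnote is the rising factorial $\Gamma(x+n)/\Gamma(x)$. If you try to verify the recursion literally with the rising convention you will find a mismatch already at $k=2$; the intended meaning is the binomial (falling) coefficient, and with that reading your closed form satisfies the recursion.
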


We are now ready to prove Proposition~\ref{prop:modphi-nonlattice-gz} below.

By the expansion $g(z)=\sum_{k=0}^{\infty}g_{k}z^{\Delta+k}$, it suffices
to show our results for $g(z)=z^{\gamma}$ for every $\gamma\geq 0$. Fix $h$ where $\eta'(h)=x$.
As in Lemma~4.7  of \cite{modphi}, we
denote $\tilde X_n$ as the random variable which follows the law $Q(dy) = \frac{e^{hy}}{\phi_{X_n}(h)} \mathbb{P}(X_n \in dy)$.
Denote $F_n(\cdot)$ the cumulative distribution function of the random variable
\begin{equation*}
\frac{\tilde X_n - t_n \eta'(h)}{\sqrt{t_n \eta''(h)}}.
\end{equation*}
Then as in the proof of Theorem~4.3 in \cite{modphi}, one can readily compute that
\begin{align}
&\mathbb{E}\left[(X_{n}-t_{n}x)^{\gamma}1_{X_{n}\geq t_{n}x}\right]
\nonumber \\
&=\varphi_{X_{n}}(h)\int_{y=t_n x}^{\infty} (y- t_n x)^{\gamma} e^{-hy} Q(dy)
\nonumber \\
&=\varphi_{X_{n}}(h)\int_{u=0}^{\infty}\left(\sqrt{t_{n}\eta''(h)}u\right)^{\gamma}e^{-h(t_{n}\eta'(h)+\sqrt{t_{n}\eta''(h)}u)}dF_{n}(u)
\nonumber \\
&=\psi_{n}(h)e^{-t_{n}F(x)}\left(\sqrt{t_{n}\eta''(h)}\right)^{\gamma}\int_{u=0}^{\infty}u^{\gamma}e^{-h\sqrt{t_{n}\eta''(h)}u}dF_{n}(u)
\nonumber \\
&=\psi_{n}(h)e^{-t_{n}F(x)}\left(\sqrt{t_{n}\eta''(h)}\right)^{\gamma}
\nonumber \\
&\qquad\qquad\cdot
\int_{u=0}^{\infty}\left[h\sqrt{t_{n}\eta''(h)}u^{\gamma}-\gamma u^{\gamma-1}\right]e^{-h\sqrt{t_{n}\eta''(h)}u}(F_{n}(u)-F_{n}(0))du, \label{eq:int1}
\end{align}
where we used integration by parts in the last equality.

Suppose we can find $G_{n}(x)=\int_{-\infty}^{x}g_{n}(y)dy$,
such that
\begin{equation}
\sup_{x\in\mathbb{R}}|F_{n}(x)-G_{n}(x)|=o(t_{n}^{-v}), \label{eq:F-G}
\end{equation}
where
\begin{equation}
g_{n}(y)=
\frac{1}{\sqrt{2\pi}}e^{-y^{2}/2}
\left[1+ \sum_{j=1}^{2v}\frac{Q_{j}(y)}{t_{n}^{j/2}}
\right],  \label{eq:gn}
\end{equation}
with $Q_{j}(y)$ being polynomials of order $j$ in $y$.
Then we obtain from \eqref{eq:int1} that
\begin{align} \label{eq: gamma-compute}
&\mathbb{E}\left[(X_{n}-t_{n}x)^{\gamma}1_{X_{n}\geq t_{n}x}\right]
\nonumber  \\
&=\psi_{n}(h)e^{-t_{n}F(x)}\left(\sqrt{t_{n}\eta''(h)}\right)^{\gamma}
\nonumber  \\
&\qquad\qquad\cdot
\int_{u=0}^{\infty}\left[h\sqrt{t_{n}\eta''(h)}u^{\gamma}-\gamma u^{\gamma-1}\right]e^{-h\sqrt{t_{n}\eta''(h)}u}(G_{n}(u)-G_{n}(0)
+o(t_{n}^{-v}))du
\nonumber  \\
&=\psi_{n}(h)e^{-t_{n}F(x)}\left(\sqrt{t_{n}\eta''(h)}\right)^{\gamma}
\nonumber  \\
&\qquad\qquad\cdot
\int_{u=0}^{\infty}\bigg[\int_{y=0}^{u}\left[h\sqrt{t_{n}\eta''(h)}u^{\gamma}-\gamma u^{\gamma-1}\right]e^{-h\sqrt{t_{n}\eta''(h)}u}
\cdot g_{n}(y)dy\bigg]du
\nonumber \\
&\qquad
+\psi_{n}(h)e^{-t_{n}F(x)}\left(\sqrt{t_{n}\eta''(h)}\right)^{\gamma}
\int_{u=0}^{\infty}\left[h\sqrt{t_{n}\eta''(h)}u^{\gamma}-\gamma u^{\gamma-1}\right]e^{-h\sqrt{t_{n}\eta''(h)}u}du
\cdot
o\left(t_{n}^{-v}\right)
\nonumber  \\
&=\psi_{n}(h)e^{-t_{n}F(x)}\left(\sqrt{t_{n}\eta''(h)}\right)^{\gamma} \cdot \left(
\int_{y=0}^{\infty}y^{\gamma}e^{-h\sqrt{t_{n}\eta''(h)}y}
g_{n}(y)dy
+1_{\gamma=0}\cdot o(t_{n}^{-v})  \right),
\end{align}
where we used
\begin{equation*}
\int_{u=0}^{\infty}\left[h\sqrt{t_{n}\eta''(h)}u^{\gamma}-\gamma u^{\gamma-1}\right]e^{-h\sqrt{t_{n}\eta''(h)}u}du
=y^{\gamma}e^{-h\sqrt{t_{n}\eta''(h)}y}
\bigg|_{y=\infty}^{0}=1_{\gamma=0}.
\end{equation*}
To calculate and approximate the integral in \eqref{eq: gamma-compute}, in view of the expression of $g_n(\cdot)$ in \eqref{eq:gn}, it suffices to note that
\begin{equation*}
\int_{y=0}^{\infty}y^{\gamma}e^{-h\sqrt{t_{n}\eta''(h)}y}
\frac{1}{\sqrt{2\pi}}e^{-\frac{y^{2}}{2}}y^{j}dy
=e^{h^{2}t_{n}\eta''(h)\frac{1}{2}}
\frac{1}{\sqrt{2\pi}}\int_{y=0}^{\infty}y^{\gamma+j}
e^{-\frac{(y+h\sqrt{t_{n}\eta''(h)})^{2}}{2}}dy,
\end{equation*}
and by Lemma \ref{yintegral2}, for any $s\geq 0$,
\begin{equation*}
\int_{y=0}^{\infty}y^{s}e^{-\frac{(y+h\sqrt{t_{n}\eta''(h)})^{2}}{2}}dy
\sim e^{-h^{2}t_{n}\eta''(h)\frac{1}{2}}
\sum_{k=0}^{\infty}\frac{\Gamma(k+s+1)a_{k}}{(h^{2}t_{n}\eta''(h))^{\frac{s}{2}+\frac{1}{2}+k}}
=e^{-h^{2}t_{n}\eta''(h)\frac{1}{2}}O\left(t_{n}^{-\frac{s}{2}-\frac{1}{2}}\right),
\end{equation*}
as this implies that
\begin{equation} \label{eq:int3}
\int_{0}^{\infty}y^{\gamma} e^{-h\sqrt{t_{n}\eta''(h)}y} \frac{1}{\sqrt{2\pi}}e^{-y^{2}/2}
\frac{Q_{j}(y)}{t_{n}^{j/2}}dy
=O\left(t_{n}^{-\frac{\gamma+j}{2}-\frac{1}{2}-\frac{j}{2}}\right)
=O\left(t_{n}^{-\frac{\gamma}{2}-\frac{1}{2}-j}\right).
\end{equation}
It then follows from \eqref{eq: gamma-compute} and the fact that
 $\psi_{n}(h)$ converges to $\psi(h)$ with speed $O(t_{n}^{-v})$ locally uniformly that
for $g(x) = x^{\gamma}$, we have
the expansion of the form
\begin{equation*}
\mathbb{E}\left[g(X_{n}-t_{n}x)1_{X_{n}\geq t_{n}x}\right]
=\frac{e^{-t_{n}F(x)}}{\sqrt{2\pi}\sqrt{t_{n}\eta''(h)}}
\left[\hat{d}_{0}+\frac{\hat{d}_{1}}{t_{n}}+\frac{\hat{d}_{2}}{t_{n}^{2}}+\cdots
+\frac{\hat{d}_{n-1}}{t_{n}^{v-1}}+o\left(t_{n}^{-v+1}\right)\right].
\end{equation*}
To complete the proof, it remains to find the function $G_n(\cdot)$ so that \eqref{eq:F-G} holds and identify the coefficients
$\hat{d}_{k}$ in the above expansion. This will be done in the next section.



\subsubsection{Finding $G_n(\cdot)$ and computing $\hat{d}_{k}$}\label{sec:hatdk}

In this section we show how to find the function $G_n(\cdot)$ (or equivalently $g_n(\cdot)$)
so that \eqref{eq:F-G} holds, and we also compute the expansion coefficients $\hat{d}_{k}$.

To find $G_n(\cdot)$ which approximates the distribution function $F_n(\cdot)$, we approximate the Fourier transform of the distribution $F_n(\cdot)$, and then take the inverse Fourier transform to obtain $g_n(\cdot)$. To this end, we write the Fourier transform $f_n^{\ast}(\zeta)= \int_{\mathbb{R}} e^{i \zeta x} d F_n(x)$, and
\begin{equation*}
\tilde{\psi}(z)=\frac{\psi(z+h)}{\psi(h)},
\qquad
\text{and}
\qquad
\tilde{\eta}(z)=\eta(z+h)-\eta(h).
\end{equation*}
Then for every $k\in\mathbb{N}$,
\begin{equation*}
\tilde{\psi}^{(k)}(0)=\frac{\psi^{(k)}(h)}{\psi(h)},
\qquad\text{and}
\qquad
\tilde{\eta}^{(k)}(0)=\eta^{(k)}(h).
\end{equation*}
We can compute that
for $z= i \zeta$,
\begin{equation} \label{eq:transformF}
f_n^*(\zeta):={\mathbb{E}}\left[e^{z\frac{\tilde X_{n}-t_{n} {\eta}'(h)}{\sqrt{t_{n} {\eta}''(h)}}}\right]
=e^{t_{n}\left(\tilde{\eta}\left(\frac{z}{\sqrt{t_{n} {\eta}''(h)}}\right)- {\eta}'(h)\frac{z}{\sqrt{t_{n} {\eta}''(h)}}\right)}
\cdot \tilde{\psi}_{n}\left(\frac{z}{\sqrt{t_{n} {\eta}''(h)}}\right),
\end{equation}
where
\begin{equation*}
\tilde{\psi}_{n}(z) :=e^{-t_n \tilde \eta(z)} \cdot \mathbb{E}\left[e^{z \tilde X_{n}}\right].
\end{equation*}
By Lemma~4.7 in \cite{modphi}, $\tilde{\psi}_{n}$ converges to $\tilde{\psi}$ locally uniformly with speed $O(t_n^{-v})$,
{and together with Taylor expansion, one obtains
\begin{equation*}
\tilde{\psi}_{n} \left(\frac{z}{\sqrt{t_{n} {\eta}''(h)}}\right) = \sum_{i=0}^{2v} \frac{\tilde{\psi}^{(i)}(0)}{i!} \left(\frac{z}{\sqrt{t_{n} {\eta}''(h)}}\right)^i
+ o\left((z/\sqrt{t_n})^{2v}\right).
\end{equation*}}
In addition, as in \eqref{eq:fadi}, one can expand the following term in \eqref{eq:transformF}
$$e^{t_{n}\left(\tilde{\eta}\left(\frac{z}{\sqrt{t_{n} {\eta}''(h)}}\right)- {\eta}'(h)\frac{z}{\sqrt{t_{n} {\eta}''(h)}}\right)} = e^{\frac{z^{2}}{{\eta}''(h)}\sum_{k=1}^{\infty}\frac{ {\eta}^{(k+2)}(h)}{(k+2)!}
\left(\frac{z}{\sqrt{t_{n}{\eta}''(h)}}\right)^{k}}$$ by applying the Fa\`{a} di Bruno's formula (see Section \ref{sec:Faa}). This leads to the following: 
\begin{equation} \label{eq:fn-gn}
f_{n}^{\ast}(\zeta)=\mathbf{g}_{n}^{\ast}(\zeta)
+e^{\frac{z^{2}}{2}}P_{2v+1}(|z|)\left(\frac{|z|}{\sqrt{t_{n}}}\right)^{2v}\varepsilon\left(\frac{z}{\sqrt{t_{n}}}\right),
\end{equation}
where $\lim_{t\rightarrow 0}\varepsilon(t)=0$, and $P_{i}(|z|)$ is some polynomial of $|z|$ of order $i$, and for $z= i \zeta$,
\begin{align}\label{eq:approx-F}
&\mathbf{g}_n^{\ast}(\zeta)=\sum_{k=0}^{2v}e^{\frac{z^{2}}{2}}
\sum_{\ell=0}^{k}\frac{\psi^{(k-\ell)}(h)}{\psi(h)(k-\ell)!}
\sum_{\mathcal{S}_{\ell}}\frac{1}{m_{1}!1!^{m_{1}}m_{2}!2!^{m_{2}}\cdots m_{\ell}!\ell!^{m_{\ell}}}
\nonumber \\
&\qquad\qquad\qquad
\cdot\prod_{j=1}^{\ell}\left(\frac{1}{\eta''(h)}\frac{\eta^{(j+2)}(h)}{(j+2)(j+1)}\right)^{m_{j}}
\frac{z^{k+2(m_{1}+\cdots+m_{\ell})}}{(\eta''(h))^{k/2}}\frac{1}{t_{n}^{k/2}}.
\end{align}
It is known that
\begin{equation}\label{eq:expand1}
\frac{1}{2\pi}\int_{-\infty}^{\infty}e^{-ity}e^{-t^{2}/2}(it)^{k}dt
=H_{k}(y)\frac{1}{\sqrt{2\pi}}e^{-\frac{y^{2}}{2}},
\end{equation}
where $H_{k}(y)$ are Hermite polynomials given by
\begin{equation} \label{eq:hermite}
H_{k}(y)=k!\sum_{m=0}^{\lfloor k/2\rfloor}\frac{(-1)^{m}}{m!(k-2m)!}\frac{y^{k-2m}}{2^{m}}.
\end{equation}
Therefore, by \eqref{eq:approx-F},  \eqref{eq:expand1} and $z=i\zeta$,
the inverse Fourier transform of $\mathbf{g}_n^{\ast}(\cdot)$
is given by
\begin{align}\label{fny}
g_{n}(y)
&:=\frac{1}{2\pi}\int_{-\infty}^{\infty}\mathbf{g}_{n}^{*}(\zeta)e^{-iy\zeta}d\zeta
\nonumber
\\
&=\frac{1}{\sqrt{2\pi}}e^{-\frac{y^{2}}{2}}
\sum_{k=0}^{2v}
\sum_{\ell=0}^{k}\frac{\psi^{(k-\ell)}(h)}{\psi(h)(k-\ell)!}
\sum_{\mathcal{S}_{\ell}}\frac{1}{m_{1}!1!^{m_{1}}m_{2}!2!^{m_{2}}\cdots m_{\ell}!\ell!^{m_{\ell}}}
\nonumber
\\
&\qquad\qquad\qquad
\cdot\prod_{j=1}^{\ell}\left(\frac{1}{\eta''(h)}\frac{\eta^{(j+2)}(h)}{(j+2)(j+1)}\right)^{m_{j}}
\frac{H_{k+2(m_{1}+\cdots+m_{\ell})}(y)}{(\eta''(h))^{k/2}}\frac{1}{t_{n}^{k/2}}.
\end{align}
We will show later that for $G_n(x) =\int_{-\infty}^x g_n(y)dy$ with $g_n(\cdot)$ given in \eqref{fny}, we have
\begin{equation*}
\sup_{x\in\mathbb{R}}|F_{n}(x)-G_{n}(x)|=o(t_{n}^{-v}).
\end{equation*}
By using the expansion for Hermite polynomials in \eqref{eq:hermite}, one can readily express $g_n(\cdot)$ in the form of \eqref{eq:gn}. Then similar arguments in \eqref{eq: gamma-compute}-\eqref{eq:int3} yield that
\begin{align*}
&\mathbb{E}\left[(X_{n}-t_{n}x)^{\gamma}1_{X_{n}\geq t_{n}x}\right]
\\
&=e^{-t_{n}F(x)}
\frac{1}{\sqrt{2\pi t_{n}\eta''(h)}}\frac{1}{h^{\gamma+1}}
\\
&\qquad\cdot\sum_{k=0}^{2v}
\sum_{\ell=0}^{k}\frac{\psi^{(k-\ell)}(h)}{(k-\ell)!}
\sum_{\mathcal{S}_{\ell}}\frac{1}{m_{1}!1!^{m_{1}}m_{2}!2!^{m_{2}}\cdots m_{\ell}!\ell!^{m_{\ell}}}
\\
&\qquad\qquad
\cdot\prod_{j=1}^{\ell}\left(\frac{1}{\eta''(h)}\frac{\eta^{(j+2)}(h)}{(j+2)(j+1)}\right)^{m_{j}}
\frac{1}{(\eta''(h))^{k/2}}
\\
&\qquad\cdot
\sum_{m=0}^{\lfloor\frac{k}{2}+m_{1}+\cdots+m_{\ell}\rfloor}
\frac{2^{m}(-1)^{m}(k+2(m_{1}+\cdots+m_{\ell}))!}{m!(k+2(m_{1}+\cdots+m_{\ell})-2m)!}
\\
&\qquad\cdot
\sum_{k+(m_{1}+\cdots+m_{\ell})-m+q\leq v}\frac{\Gamma(q+\gamma+k+2(m_{1}+\cdots+m_{\ell})-2m+1)a_{q}}{(h^{2}\eta''(h))^{\frac{k}{2}
+(m_{1}+\cdots+m_{\ell})-m+q} \cdot t_{n}^{k+(m_{1}+\cdots+m_{\ell})-m+q}}
+o(t_n^{-v+1}).
\end{align*}
By comparing the above equation with the expansion
\begin{equation*}
\mathbb{E}\left[(X_{n}-t_{n}x)^{\gamma}1_{X_{n}\geq t_{n}x}\right]
=\frac{e^{-t_{n}F(x)}}{\sqrt{2\pi t_{n}\eta''(h)}}
\left(\hat{d}_{0}+\frac{\hat{d}_{1}}{t_{n}}+\frac{\hat{d}_{2}}{t_{n}^{2}}
+\cdots+\frac{\hat{d}_{v-1}}{t_{n}^{v-1}}
+O\left(\frac{1}{t_{n}^{v}}\right)\right),
\end{equation*}
we conclude that for every $k\leq v-1$,
\begin{align*}
\hat{d}_{k}
&=\frac{1}{h^{\gamma+1}}
\cdot\sum_{p=0}^{2k}
\sum_{\ell=0}^{p}\frac{\psi^{(p-\ell)}(h)}{(p-\ell)!}
\\
&\qquad
\cdot\sum_{\mathcal{S}_{\ell}}\frac{1}{m_{1}!1!^{m_{1}}m_{2}!2!^{m_{2}}\cdots m_{\ell}!\ell!^{m_{\ell}}}
\cdot\prod_{j=1}^{\ell}\left(\frac{1}{\eta''(h)}\frac{\eta^{(j+2)}(h)}{(j+2)(j+1)}\right)^{m_{j}}
\frac{1}{(\eta''(h))^{p/2}}
\\
&\qquad\cdot
\sum_{m=0}^{\lfloor\frac{p}{2}+m_{1}+\cdots+m_{\ell}\rfloor}
\frac{2^{m}(-1)^{m}(p+2(m_{1}+\cdots+m_{\ell}))!}{m!(p+2(m_{1}+\cdots+m_{\ell})-2m)!}
\\
&\qquad\qquad\qquad\cdot
\frac{\Gamma(k+\gamma+(m_{1}+\cdots+m_{\ell})-m+1)a_{k+m-p-(m_{1}+\cdots+m_{\ell})}}{(h^{2}\eta''(h))^{-\frac{p}{2}+k}}.
\end{align*}
So we have obtained the formula of $\hat d_k$ when $g(z)= z^{\gamma}$.
Finally, assume $g(z)$
has the expansion $g(z)=\sum_{q=0}^{\infty}g_{q}z^{q+\Delta}$, then the formula of $\hat d_k$ in Part (ii) of Proposition~\ref{prop:modphi-nonlattice-gz} readily follows.


To complete the proof, it only remains to show that
\begin{equation*}
\sup_{x\in\mathbb{R}}|F_{n}(x)-G_{n}(x)|=o(t_{n}^{-v}).
\end{equation*}
where $G_n(x) = \int_{-\infty}^x g_n(y) dy$ with $g_n(\cdot)$ given in \eqref{fny}.
We apply the Esseen's smoothing inequality in Appendix~\ref{sec:smooth} and use a similar argument as the proof of Proposition~4.1 in \cite{modphi}.

%
%
%
Recall from \eqref{eq:fn-gn} that
\begin{equation*}
f_{n}^{\ast}(\zeta)=\mathbf{g}_{n}^{\ast}(\zeta)
+e^{\frac{z^{2}}{2}}P_{2v+1}(|z|)\left(\frac{|z|}{\sqrt{t_{n}}}\right)^{2v}\varepsilon\left(\frac{z}{\sqrt{t_{n}}}\right),
\end{equation*}
where $f_{n}^{\ast}$ and $\mathbf{g}_{n}^{\ast}$ are the corresponding Fourier transforms of $F_n$ and $G_n$,
$\lim_{t\rightarrow 0}\varepsilon(t)=0$, and $P_{i}(|z|)$ is a polynomial of $|z|$ of order $i$. Write $k=2v$.
{We can check that $F_{n}-G_{n}$ vanish at $\pm\infty$,
since $F_{n}$ and $G_n$ both vanish at $-\infty$ and
$\lim_{x \rightarrow +\infty} F_{n}(x) = \lim_{x \rightarrow +\infty} G_{n}(x) =1$ as $F_n$ is a cumulative distribution function and $\mathbf{g}_{n}^{\ast}(0)=1$ from \eqref{eq:approx-F}.
In addition, we have
\begin{equation*}
\sup_{n\in\mathbb{N}}\sup_{x\in\mathbb{R}}|G'_{n}(x)|=\sup_{n\in\mathbb{N}}\sup_{x\in\mathbb{R}}g_{n}(x)
=:m<\infty,
\end{equation*}
from the definition of $g_{n}$ in \eqref{fny}
and the fact that $e^{-\frac{y^{2}}{2}}H_{k+2(m_{1}+\cdots+m_{\ell})}(y)\frac{1}{t_{n}^{k/2}}$
is uniformly bounded in $n\in\mathbb{N}$, $y\in\mathbb{R}$ and $m_{1}\cdot 1+\cdots+m_{\ell}\ell=\ell$, where
$\ell\leq k\leq 2v$, where $v\in\mathbb{N}$ is fixed.}
By taking $T=Mt_{n}^{k/2}$ and fixing $\delta \in (0, M)$, we deduce from Esseen's smoothing inequality that
\begin{align}
|F_{n}(x)-G_{n}(x)|
&\leq\frac{1}{\pi}\int_{-Mt_{n}^{k/2}}^{Mt_{n}^{k/2}}
\left|\frac{f^{\ast}_{n}(\zeta)-\mathbf{g}^{\ast}_{n}(\zeta)}{\zeta}\right|d\zeta
+r(\pi) \frac{m}{ Mt_{n}^{k/2}}
\nonumber \\
&\leq
\frac{1}{\pi}\int_{-\delta t_{n}^{1/2}}^{\delta t_{n}^{1/2}}
\left|\frac{f^{\ast}_{n}(\zeta)-\mathbf{g}^{\ast}_{n}(\zeta)}{\zeta}\right|d\zeta
+r(\pi)  \frac{m}{ Mt_{n}^{k/2}}
\nonumber \\
&\qquad
+\frac{1}{\pi}\int_{\left[-M t_{n}^{k/2},Mt_{n}^{k/2}\right]\big\backslash\left[-\delta t_{n}^{1/2},\delta t_{n}^{1/2}\right]}\left|\frac{f^{\ast}_{n}(\zeta)-\mathbf{g}^{\ast}_{n}(\zeta)}{\zeta}\right|d\zeta. \label{eq:essen}
\end{align}
We next estimate each of the three terms in the right hand side of the above inequality.
We can estimate that
\begin{align*}
\frac{1}{\pi}\int_{-\delta t_{n}^{1/2}}^{\delta t_{n}^{1/2}}
\left|\frac{f^{\ast}_{n}(\zeta)-\mathbf{g}^{\ast}_{n}(\zeta)}{\zeta}\right|d\zeta
&=\frac{1}{t_{n}^{k/2}}\frac{1}{\pi}\int_{-\delta t_{n}^{1/2}}^{\delta t_{n}^{1/2}}
e^{-\zeta^{2}/2}P_{k+1}(|\zeta|)|\zeta|^{k-1}\varepsilon\left(\frac{\zeta}{\sqrt{t_{n}}}\right)d\zeta
\\
&\leq\frac{C_{0}}{t_{n}^{k/2}}\max_{-\delta\leq t\leq\delta}\varepsilon(t),
\end{align*}
{for some constant $C_0$ (that depends on $k$). For fixed small $\epsilon>0$, one can choose $\delta$ so that $\max_{-\delta\leq t\leq\delta}\varepsilon(t) < \epsilon$ and $m \delta < \epsilon$, and hence we have for large $t_n$
\begin{equation*}
\frac{1}{\pi}\int_{-\delta t_{n}^{1/2}}^{\delta t_{n}^{1/2}}
\left|\frac{f^{\ast}_{n}(\zeta)-\mathbf{g}^{\ast}_{n}(\zeta)}{\zeta}\right|d\zeta
=o\left(\frac{1}{t_{n}^{k/2}}\right),
\end{equation*}
uniformly in $x$. In addition, by taking $M= 1/\delta$ we get}
\begin{equation*}
r(\pi)  \frac{m}{ Mt_{n}^{k/2}}=o\left(\frac{1}{t_{n}^{k/2}}\right).
\end{equation*}

We can also estimate that
\begin{align}
&\frac{1}{\pi}\int_{[-M t_{n}^{k/2},Mt_{n}^{k/2}]\backslash[-\delta t_{n}^{1/2},\delta t_{n}^{1/2}]}\left|\frac{f^{\ast}_{n}(\zeta)-\mathbf{g}^{\ast}_{n}(\zeta)}{\zeta}\right|d\zeta
\nonumber \\
&\leq\frac{1}{\pi\delta t_{n}^{1/2}}
\left[\int_{\delta t_{n}^{1/2}\leq|\zeta|\leq Mt_{n}^{k/2}}|f^{\ast}_{n}(\zeta)|d\zeta
+\int_{\delta t_{n}^{1/2}\leq|\zeta|\leq Mt_{n}^{k/2}}|\mathbf{g}^{\ast}_{n}(\zeta)|d\zeta\right], \label{eq:3rdterm}
\end{align}
and we can compute that
\begin{align*}
&\int_{\delta t_{n}^{1/2}\leq|\zeta|\leq Mt_{n}^{k/2}}|f^{\ast}_{n}(\zeta)|d\zeta
\\
&\leq\int_{\delta t_{n}^{1/2}\leq|\zeta|\leq Mt_{n}^{k/2}}
\left|\psi_{n}\left(\frac{i\zeta}{\sqrt{t_{n}\eta''(0)}}\right)\right|
\left|e^{t_{n}\eta(\frac{i\zeta}{\sqrt{t_{n}\eta''(0)}})}\right|d\zeta
\\
&\leq \sqrt{t_n} \int_{\delta\leq|\zeta|\leq Mt_{n}^{(k-1)/2}}\left|\psi_{n}\left(\frac{i\zeta}{\sqrt{\eta''(0)}}\right)\right|d\zeta \cdot
\left(\max_{|\zeta|\geq\delta}  \left|\exp\left\{\eta\left(\frac{i\zeta}{\sqrt{\eta''(0)}}\right)\right\}\right|\right)^{t_{n}}.
\end{align*}
Since $\phi$ is non-lattice with $\int_{\mathbb{R}} e^{zx}\phi(dx) = e^{\eta(z)}$, by Lemma 4.9. in \cite{modphi}, we obtain
\begin{equation*}
\max_{|\zeta|\geq\delta}\left|\exp\left\{\eta\left(\frac{i\zeta}{\sqrt{\eta''(0)}}\right)\right\}\right|<1.
\end{equation*}
Then it follows that
\begin{align*}
&\int_{\delta t_{n}^{1/2}\leq|\zeta|\leq Mt_{n}^{k/2}}|f^{\ast}_{n}(\zeta)|d\zeta
\\
&\leq\limsup_{n\rightarrow\infty}\sup_{x\in\mathbb{R}}\left|\psi_{n}\left(ix\right)\right|
\cdot 2Mt_{n}^{k/2}
\cdot\left(\max_{|\zeta|\geq\delta}\left|\exp\left\{\eta\left(\frac{i\zeta}{\sqrt{\eta''(0)}}\right)\right\}\right|\right)^{t_{n}},
\end{align*}
which goes to zero faster than any power of $t_{n}$.
Finally, the term $\int_{\delta t_{n}^{1/2}\leq|\zeta|\leq Mt_{n}^{k/2}}|\mathbf{g}^{\ast}_{n}(\zeta)|d\zeta$ in \eqref{eq:3rdterm} can be estimated similarly.
Therefore, we deduce from \eqref{eq:essen} that for large $t_n$,
\begin{equation*}
\sup_{x\in\mathbb{R}}|F_{n}(x)-G_{n}(x)| = o\left(\frac{1}{t_{n}^{k/2}}\right) =
o\left(\frac{1}{t_{n}^{v}}\right).
\end{equation*}
The proof is hence complete.

\subsubsection{Proof of Lemma~\ref{yintegral2}}

\begin{proof}[Proof of Lemma \ref{yintegral2}]
For any $s\geq 0$, it is readily seen that
\begin{equation*}
\int_{y=0}^{\infty}y^{s}e^{-\frac{(y+h\sqrt{t_{n}\eta''(h)})^{2}}{2}}dy
=\left(h\sqrt{t_{n}\eta''(h)}\right)^{s+1}\int_{y=0}^{\infty}y^{s}e^{-h^{2}t_{n}\eta''(h)\frac{(y+1)^{2}}{2}}dy.
\end{equation*}
For large $t_n$, we can apply the Laplace's method (see Lemma~\ref{Laplace'sThm} in Appendix~\ref{sec:Laplace's}) to estimate the above integral. To this end, we use the notation in Lemma~\ref{Laplace'sThm}, and we set
\begin{equation*}
p(y)=\frac{1}{2}(y+1)^{2},
\qquad
q(y)=y^{s},
\end{equation*}
and $a=0$, $b=\infty$ and $\lambda=h^{2}t_{n}\eta''(h)$.
We can compute that
\begin{equation*}
p(y)=\frac{1}{2}+y+\frac{1}{2}y^{2},
\end{equation*}
which implies that $\mu=1$.
Since $q(y)=y^{s}$, we have $\sigma=s+1$.
Moreover,
\begin{equation*}
v=p(y)-p(0)=y+\frac{1}{2}y^{2},
\end{equation*}
and $(a_{k})_{k=0}^{\infty}$ are determined from:
\begin{equation*}
\frac{q(y)}{p'(y)}
=\frac{y^{s}}{1+y}
\sim\sum_{k=0}^{\infty}a_{k}\left(y+\frac{1}{2}y^{2}\right)^{k+s},\qquad y\rightarrow 0^{+}.
\end{equation*}
Hence, as $t_{n}\rightarrow\infty$,
\begin{equation*}
\int_{y=0}^{\infty}y^{s}e^{-h^{2}t_{n}\eta''(h)\frac{(y+1)^{2}}{2}}dy
\sim e^{-h^{2}t_{n}\eta''(h)\frac{1}{2}}
\sum_{k=0}^{\infty}\frac{\Gamma(k+s+1)a_{k}}{(h^{2}t_{n}\eta''(h))^{s+1+k}}.
\end{equation*}
Finally, we can compute that
\begin{equation*}
\frac{y^{s}}{1+y}
\sim\sum_{k=0}^{\infty}a_{k}\left(y+\frac{1}{2}y^{2}\right)^{k+s}
=\sum_{k=0}^{\infty}a_{k}y^{s+k}\left(1+\frac{1}{2}y\right)^{k+s},
\end{equation*}
which implies that for $y\rightarrow 0^{+}$,
\begin{equation*}
\frac{1}{1+y}
=\sum_{k=0}^{\infty}(-1)^{k}y^{k}
\sim\sum_{j=0}^{\infty}a_{j}y^{j}\left(1+\frac{1}{2}y\right)^{j+s}
=\sum_{j=0}^{\infty}a_{j}y^{j}\sum_{i=0}^{\infty}\frac{1}{2^{i}}y^{i}\frac{(j+s)_{i}}{i!},
\end{equation*}
where $(\cdot)_{i}$ is the Pochhammer symbol. This implies that
\begin{equation*}
(-1)^{k}=\sum_{j=0}^{k}\frac{(j+s)_{k-j}}{(k-j)!2^{k-j}}a_{j}
=a_{k}+\sum_{j=0}^{k-1}\frac{(j+s)_{k-j}}{(k-j)!2^{k-j}}a_{j}.
\end{equation*}
Hence, the conclusion follows.
\end{proof}


\section{Computations of expansion coefficients for affine point processes}\label{sec:affine-coeff}

In this section, we compute $\hat c_{k}$ and
$\hat d_k$ for affine point processes. Their general formulas are given in Propositions~\ref{prop:modphi-lattice} and
\ref{prop:modphi-nonlattice-gz}. It is clear that these coefficients depend on the derivatives
of $\eta$ and $\psi$ at $h$. Recall that for affine point processes, the function $\psi(\cdot)$ is given in \eqref{psiDefn}, $\eta(\cdot)$ is given in \eqref{eq:eta-phi},
and the real number $h$ is defined by $\eta'(h)=R$.
We next discuss how to compute high order derivatives for functions $\psi$ and $\eta$ that are associated with affine point processes.

\subsection{Computations of $\psi^{(k)}(\theta)$}

From \eqref{psiDefn}, we have $\psi(\theta)=e^{u^{\ast}(\theta)^{T}x_{0}+B(\infty;\theta,u^{\ast}(\theta))}.$
By applying Fa\`{a} di Bruno's formula (see Section \ref{sec:Faa}), we get the $k-$th derivative of $\psi$ is given by
\begin{equation*}
\psi^{(k)}(\theta)
=\sum_{\mathcal{S}_{k}}\frac{k!\psi(\theta)}{m_{1}!1!^{m_{1}}m_{2}!2!^{m_{2}}\cdots m_{k}!k!^{m_{k}}}
\cdot
\prod_{j=1}^{k}\left(\left((u^{\ast})^{(j)} (\theta) \right)^{T}x_{0}
+\frac{d^{j}}{d\theta^{j}}B(\infty;\theta,u^{\ast}(\theta))\right)^{m_{j}}.
\end{equation*}
Hence, it suffices for us to compute the derivatives  $\frac{d^{j}}{d\theta^{j}}B(\infty;\theta,u^{\ast}(\theta))$
which will be presented in Section \ref{BkSection}, and the derivatives of $u^{\ast}(\theta)$ with respect to $\theta$ (up to order $k$), which will be presented in Section \ref{UkSection}.

\subsection{Computations of $\eta^{(k)}(\theta)$}

We recall from \eqref{eq:eta-phi} that
\begin{equation*}
\eta(\theta)=u^{\ast}(\theta)^{T}b
+\sum_{i=1}^{n}\lambda_{i}\left(\mathbb{E}\left[e^{\left(\theta+u^{\ast}(\theta)^{T}\gamma_{i}\right)Z_i}\right]-1\right).
\end{equation*}
For any $k\geq 1$, by Leibnitz formula, we can compute the $k-$th derivative of $\eta$:
\begin{align*}
\eta^{(k)}(\theta)
&=(u^{\ast})^{(k)}(\theta)^{T}b
+\sum_{i=1}^{n}\lambda_{i}\mathbb{E}
\left[\frac{d^{k}}{d\theta^{k}}e^{\theta Z_{i}}e^{u^{\ast}(\theta)^{T}\gamma_{i}Z_i}\right]
\\
&=(u^{\ast})^{(k)}(\theta)^{T}b
+\sum_{i=1}^{n}\lambda_{i}\mathbb{E}
\left[\sum_{j=0}^{k}\binom{k}{j}Z_{i}^{k-j}e^{\theta Z_{i}}\frac{d^{j}}{d\theta^{j}}e^{u^{\ast}(\theta)^{T}\gamma_{i}Z_i}\right],
\end{align*}
and by Fa\`{a} di Bruno's formula,
\begin{align*}
\frac{d^{j}}{d\theta^{j}}e^{u^{\ast}(\theta)^{T}\gamma_{i}Z_i}
&=\sum_{\mathcal{S}_{j}}\frac{j!e^{u^{\ast}(\theta)^{T}\gamma_{i}Z_i}}
{m_{1}!1!^{m_{1}}m_{2}!2!^{m_{2}}\cdots m_{j}!j!^{m_{j}}}
\cdot
\prod_{\ell=1}^{n}\left((u^{\ast})^{(\ell)}(\theta)^{T}\gamma_{i}Z_i\right)^{m_{\ell}},
\end{align*}
which yields that
\begin{align*}
\eta^{(k)}(\theta)
&=(u^{\ast})^{(k)}(\theta)^{T}b
+\sum_{i=1}^{n}\lambda_{i}\mathbb{E}
\left[\frac{d^{k}}{d\theta^{k}}e^{\theta Z_{i}}e^{u^{\ast}(\theta)^{T}\gamma_{i}Z_i}\right]
\\
&=(u^{\ast})^{(k)}(\theta)^{T}b
\\
&\qquad\qquad
+\sum_{i=1}^{n}\lambda_{i}
\sum_{j=0}^{k}\binom{k}{j}\sum_{\mathcal{S}_{j}}\frac{j!
\mathbb{E}[Z_{i}^{k-j}e^{\theta Z_{i}}e^{u^{\ast}(\theta)^{T}\gamma_{i}Z_i}
\prod_{\ell=1}^{n}((u^{\ast})^{(\ell)}(\theta)^{T}\gamma_{i}Z_i)^{m_{\ell}}]}
{m_{1}!1!^{m_{1}}m_{2}!2!^{m_{2}}\cdots m_{j}!j!^{m_{j}}}.
\end{align*}
Hence, to find the $k-$th derivative of $\eta$, we need to compute the derivatives of $u^{\ast}(\theta)$ with respect to $\theta$ (up to order $k$),
which we present in Section \ref{UkSection}.

\subsection{Computations of $\frac{d^{j}}{d\theta^{j}}B(\infty;\theta,u^{\ast}(\theta))$}\label{BkSection}

By applying multivariate Fa\`{a} di Bruno's formula and using the notations in Appendix \ref{sec:Faa}, we have
\begin{equation} \label{eq:B-deriv-1}
\frac{d^{j}}{d\theta^{j}}B(\infty;\theta,u^{\ast}(\theta))
=\sum_{1\leq|\mathbf{\lambda}|\leq j}
D_{\hat{\delta}}^{\mathbf{\lambda}}B(\infty;\hat{\delta})
\sum_{p(j,\mathbf{\lambda})}j!
\prod_{m=1}^{j}\frac{[(\hat{u}^{\ast})^{(\ell_{m})}(\theta)]^{\mathbf{k}_{m}}}
{(\mathbf{k}_{m}!)[\ell_{m}!]^{|\mathbf{k}_{m}|}},
\end{equation}
where $\hat{\delta}=\hat{u}^{\ast}(\theta):=(u_0^{\ast}(\theta), u_1^{\ast}(\theta), u_2^{\ast}(\theta), ..., u_d^{\ast}(\theta))^T$, ${u}^{\ast}(\theta)=(u_1^{\ast}(\theta), u_2^{\ast}(\theta), ..., u_d^{\ast}(\theta))^T$ and 
$u_0^{\ast}(\theta) := \theta$.

We next compute $D_{\hat{\delta}}^{\mathbf{\nu}}B(\infty;\hat{\delta})$ for a multi-index $\nu \ne \mathbf{0}$. 
Recall from Theorem~\ref{thm:app-mod} that,
\begin{align*}
B(\infty;\hat{\delta})&=\int_{0}^{\infty}b^{T}A dt+\int_{0}^{\infty}\frac{1}{2}A^{T}aAdt
\nonumber \\
&\qquad\qquad
+\sum_{i=1}^{n}\lambda_{i}\int_{0}^{\infty}\int_{\mathbb{R}_{+}}\left(e^{A^{T}\gamma_{i}z}-1\right)e^{(\hat{\delta}_0+\delta^{T}\gamma_{i})z}\varphi_{i}(dz)dt,
\end{align*}
where for notational simplicity, we write
$A : =A(t;\hat{\delta})=A(t;\theta,\delta)=(A_{1}(t;\theta,\delta),\cdots,A_{n}(t;\theta,\delta))$,
with $\hat{\delta}=(\hat{\delta}_{0},\delta)$ 
so that $\hat{\delta}_{0}=\theta$, $\delta= u^{\ast}(\theta)$, $A(0;\hat{\delta})=A(0;\theta,\delta)=-\delta$ and
\begin{equation} \label{eq:ODE-A}
\frac{d}{dt}A_{j}(t;\hat{\delta})
=-\sum_{i=1}^{n}A_{i}\beta_{i,j}^{\ast}+\frac{1}{2}A^{T}\alpha^{j}A+\sum_{i=1}^{n}
\int_{\mathbb{R}_{+}}\left(e^{A^{T}\gamma_{i}z}-1\right)e^{(\hat{\delta}_0+\delta^{T}\gamma_{i})z}\varphi_{i}(dz)\kappa_{i,j},
\end{equation}
for $j=1,2,\ldots,n$, where
\begin{equation*}
\beta^{\ast}=
\left(
\begin{array}{cc}
\beta_{I,I}-\text{diag}(\alpha_{11}^{1}u^{\ast}_{1}(\theta),\ldots,\alpha_{mm}^{m}u_{m}^{\ast}(\theta)) & 0
\\
\beta_{J,I} & \beta_{J,J}
\end{array}
\right).
\end{equation*}
By Applying Leibnitz formula and multivariate Fa\`{a} di Bruno's formula (see Appendix \ref{sec:Faa}), we get
\begin{align} \label{eq:B-deriv-2}
&D_{\hat{\delta}}^{\mathbf{\nu}}B(\infty; \hat{\delta})
\nonumber
\\
&=\int_{0}^{\infty}b^{T}D_{\hat{\delta}}^{\mathbf{\nu}}A dt
+\int_{0}^{\infty}\frac{1}{2}
\sum_{\mathbf{\lambda}\leq\mathbf{\nu}}\binom{\mathbf{\nu}}{\mathbf{\lambda}}
(D_{\hat{\delta}}^{\mathbf{\lambda}}A)^{T}a(D_{\hat{\delta}}^{\mathbf{\nu}-\mathbf{\lambda}}A)dt
\nonumber \\
&\qquad
+\sum_{i=1}^{n}\lambda_{i}\int_{0}^{\infty}\int_{\mathbb{R}_{+}}
\Bigg[\sum_{0<\tilde{\nu}\leq\nu}\binom{\nu}{\tilde{\nu}}
\sum_{1\leq\lambda\leq|\mathbf{\tilde{\nu}}|}e^{A^{T}\gamma_{i}z}
\sum_{p(\mathbf{\tilde{\nu}},\lambda)}\mathbf{\tilde{\nu}}!
\prod_{j=1}^{|\mathbf{\tilde{\nu}}|}\frac{[D_{\hat{\delta}}^{\Bell_{j}}A^{T}\gamma_{i}z]^{k_{j}}}
{(k_{j}!)[\Bell_{j}!]^{k_{j}}}
D_{\hat{\delta}}^{\nu-\tilde{\nu}}e^{(\hat{\delta}_0+\delta^{T}\gamma_{i})z}
\nonumber
\\
&\qquad\qquad\qquad\qquad\qquad
+\left(e^{A^{T}\gamma_{i}z}-1\right)D_{\hat{\delta}}^{\nu}e^{(\hat{\delta}_0+\delta^{T}\gamma_{i})z}\Bigg]\varphi_{i}(dz)dt,
\end{align}
where for two multi-indices $\lambda, \nu$, we write $\lambda \le \nu$ if $\lambda_i \le \nu_i$ for all $i$.

So to compute $D_{\hat{\delta}}^{\mathbf{\nu}}B(\infty;\hat{\delta})$, it remains to compute
$D_{\hat{\delta}}^{\mathbf{\nu}}A_{j}(t;\hat{\delta})$ for each $j$ for a multi-index $\nu$. From the ODE for $A_j(t;\hat{\delta})$ in \eqref{eq:ODE-A}, one can readily obtain the following ODE for $D_{\hat{\delta}}^{\mathbf{\nu}}A_{j}(t;\hat{\delta})$:
\begin{align} \label{eq:B-deriv-3}
&\frac{d}{dt}D_{\hat{\delta}}^{\mathbf{\nu}}A_{j}(t;\hat{\delta})
\nonumber
\\
&=-\sum_{i=1}^{n}D_{\hat{\delta}}^{\mathbf{\nu}}A_{i}\beta_{i,j}^{\ast}
+\int_{0}^{\infty}\frac{1}{2}
\sum_{\mathbf{\lambda}\leq\mathbf{\nu}}\binom{\mathbf{\nu}}{\mathbf{\lambda}}
(D_{\hat{\delta}}^{\mathbf{\lambda}}A)^{T}\alpha^{j}(D_{\hat{\delta}}^{\mathbf{\nu}-\mathbf{\lambda}}A)
\nonumber
\\
&\qquad
+\sum_{i=1}^{n}\int_{\mathbb{R}_{+}}
\sum_{1\leq\lambda\leq|\mathbf{\nu}|}e^{A^{T}\gamma_{i}z}
\Bigg[\sum_{0<\tilde{\nu}\leq\nu}\binom{\nu}{\tilde{\nu}}
\sum_{p(\mathbf{\tilde{\nu}},\lambda)}\mathbf{\tilde{\nu}}!
\prod_{j=1}^{|\mathbf{\tilde{\nu}}|}\frac{[D_{\hat{\delta}}^{\Bell_{j}}A^{T}\gamma_{i}z]^{k_{j}}}
{(k_{j}!)[\Bell_{j}!]^{k_{j}}}
D_{\hat{\delta}}^{\nu-\tilde{\nu}}e^{(\hat{\delta}_0+\delta^{T}\gamma_{i})z}
\nonumber
\\
&\qquad\qquad\qquad\qquad\qquad
+\left(e^{A^{T}\gamma_{i}z}-1\right)D_{\hat{\delta}}^{\nu}e^{(\hat{\delta}_0+\delta^{T}\gamma_{i})z}\Bigg]\varphi_{i}(dz)\kappa_{i,j},
\end{align}
for $j=1,2,\ldots,n$, with the initial condition $D_{\hat{\delta}}^{\mathbf{\nu}}A(0;\hat{\delta})=-D_{\hat{\delta}}^{\mathbf{\nu}}\delta$ as $A(0;\hat{\delta})=-\delta$.

In summary, one can numerically solve the ODEs in \eqref{eq:B-deriv-3} to obtain $D_{\hat{\delta}}^{\mathbf{\nu}}A_{j}(t;\hat{\delta})$ for each $j$, next numerically compute $D_{\hat{\delta}}^{\mathbf{\nu}}B(\infty;\hat{\delta})$ using \eqref{eq:B-deriv-2}, and finally obtain $\frac{d^{j}}{d\theta^{j}}B(\infty;\theta,u^{\ast}(\theta))$ using \eqref{eq:B-deriv-1}.

\subsection{Computations of $(u^{\ast})^{(k)}(\theta)$}\label{UkSection}

We recall from Theorem~\ref{thm:zhang2} that
$u^{\ast}(\cdot):\mathbb{R}\rightarrow\mathbb{R}^{n}$ is the implicit function defined
as the unique solution branch with $u^{\ast}(0)=0$ of the system of nonlinear equations:
\begin{equation} \label{eq:u-0}
\beta^{T}u^{\ast}(\theta)-\frac{1}{2}u^{\ast}(\theta)^{T}\alpha u^{\ast}(\theta)-
\kappa^{T}\left(\mathbb{E}\left[e^{(\theta+(u^{\ast}(\theta) )^{T}\gamma)Z}\right]-1\right)=0.
\end{equation}
To obtain the $k-$th derivative of $u^{\ast}(\theta)$ with respect to $\theta$, that is, $(u^{\ast})^{(k)}(\theta)$, we can
apply Leibnitz formula and Fa\`{a} di Bruno's formula (see Appendix \ref{sec:Faa}) to the above equation and get
\begin{align}
&\beta^{T}(u^{\ast})^{(k)}(\theta)
-\frac{1}{2}\sum_{j=0}^{k}\binom{k}{j}((u^{\ast})^{(k-j)} (\theta) )^{T}\alpha(u^{\ast})^{(j)}(\theta)
\nonumber \\
&\qquad
=\kappa^{T}
\sum_{j=0}^{k}\binom{k}{j}\sum_{\mathcal{S}_{j}}\frac{j!
\mathbb{E}[Z^{k-j}e^{\theta Z}e^{u^{\ast}(\theta)^{T}\gamma Z}
\prod_{\ell=1}^{j}((u^{\ast})^{(\ell)}(\theta)^{T}\gamma Z)^{m_{\ell}}]}
{m_{1}!1!^{m_{1}}m_{2}!2!^{m_{2}}\cdots m_{j}!j!^{m_{j}}}. \label{eq:u-k}
\end{align}
Therefore, given the parameters, one can solve the nonlinear equation in \eqref{eq:u-0} to obtain $u^{\ast}(\theta)$, and then successively solve the nonlinear equations in \eqref{eq:u-k} to obtain the derivatives $(u^{\ast})^{(k)}(\theta)$ for any $k \ge 1$.

\section{Smoothing inequality, Fa\`{a} di Bruno's formula and Laplace's Method} \label{sec:known}
For completeness, this section collects three known results that are used in our proofs.

\subsection{Esseen's smoothing inequality} \label{sec:smooth}
\begin{lemma} [Esseen's smoothing inequality, cf. Theorem~2 in Chapter V of \cite{Petrov}]

Let $F$
be a non-decreasing function and $G$ be a differentiable function of bounded variation on the real line with respective Fourier-Stieltjes transforms $f^{\ast}$ and $g^{\ast}$. Suppose that $F-G$ vanishes at $\pm \infty$ and that $G$ is $m$-Lipschitz with $\sup_x|G'(x)| \le m$. Then for every $T>0$,

\begin{equation*}
\sup_{x \in \mathbb{R}} |F(x)-G(x)|\leq\frac{1}{\pi}\int_{-T}^{T}\left|\frac{f^{\ast}(\zeta)-{g}^{\ast}(\zeta)}{\zeta}\right|d\zeta
+r(\pi) \frac{m}{T},
\end{equation*}
where $r(\pi)$ is a positive constant depending only on $\pi$.
\end{lemma}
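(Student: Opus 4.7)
The plan is to compare $H := F - G$ to a smoothed version of itself via a kernel whose Fourier transform is compactly supported in $[-T, T]$, obtaining one estimate from the physical side (using monotonicity of $F$ and the Lipschitz property of $G$) and one estimate from the Fourier side (using Parseval). I would fix a nonnegative $L^{1}$ function $K$ with $\int K = 1$ and Fourier transform $\hat K$ supported in $[-1,1]$ with $|\hat K| \le 1$; the F\'ejer-type kernel $K(y) = (1-\cos y)/(\pi y^{2})$, whose transform is $\max(0,1-|\zeta|)$, is a standard choice. Setting $K_{T}(y) = T K(Ty)$ gives $\hat K_{T}$ supported in $[-T,T]$ with the same sup bound.

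For the physical-side bound, let $\eta = \sup_{x} H(x)$; the symmetric case in which $\sup_{x}(-H)$ dominates is handled by swapping the roles of $F$ and $G$. Since $H$ vanishes at $\pm\infty$, $\eta$ is essentially attained at some $x_{0}$ with $H(x_{0}) \approx \eta$. Monotonicity of $F$ together with the $m$-Lipschitz bound on $G$ produces the one-sided estimate
\begin{equation*}
H(x_{0} + s) \;\ge\; \eta - m s \qquad \text{for all } s > 0,
\end{equation*}
so $H$ stays of order $\eta$ on a right neighborhood of $x_{0}$ of length $\sim \eta/m$. Shifting the kernel by $\tau > 0$ into this neighborhood and exploiting nonnegativity of $K_{T}$, one obtains
\begin{equation*}
I \;:=\; \int_{\mathbb{R}} H(y)\, K_{T}(y - x_{0} - \tau)\, dy \;\ge\; \eta - c_{1}\,\frac{m}{T},
\end{equation*}
after controlling the contribution of $K_{T}$ outside its bulk of width $O(1/T)$.

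For the Fourier-side bound, integration by parts using $H(\pm\infty) = 0$ converts $I$ into an integral against $dF - dG = dF - G'\,dy$, and Parseval with the Fourier-Stieltjes transforms yields
\begin{equation*}
I \;=\; \frac{1}{2\pi i} \int_{-T}^{T} \bigl(f^{\ast}(\zeta) - g^{\ast}(\zeta)\bigr)\, \frac{e^{-i\zeta(x_{0}+\tau)}}{\zeta}\, \hat K_{T}(\zeta)\, d\zeta,
\end{equation*}
from which $|I| \le \frac{1}{2\pi}\int_{-T}^{T}\bigl|(f^{\ast}(\zeta) - g^{\ast}(\zeta))/\zeta\bigr|\, d\zeta$ follows since $|\hat K_{T}| \le 1$. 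Combining with the physical-side lower bound, and noting that $H$ real makes the Fourier integrand Hermitian (upgrading $1/(2\pi)$ to $1/\pi$), one obtains the stated inequality with a remainder of the form $r(\pi)\, m/T$ after choosing $\tau$ optimally.

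The main obstacle will be justifying the Parseval/integration-by-parts step rigorously: $F$ and $G$ are not in $L^{1}$, so one must manipulate their Fourier-Stieltjes transforms as measures and rely on $F - G \to 0$ at $\pm\infty$ to kill the boundary terms; a standard truncation argument together with dominated convergence should suffice. A secondary bookkeeping issue is pinning down the absolute constant $r(\pi)$: this requires careful choice of the shift $\tau$ and sharp estimates on the truncated moments $\int_{|s|>1/T} |s|\, K_{T}(s)\, ds$ of the F\'ejer kernel.
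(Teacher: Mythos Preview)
The paper does not give its own proof of this lemma: it is stated as a known result and simply cited from Petrov (Theorem~2, Chapter~V), so there is nothing in the paper to compare against. Your outline is essentially the classical proof found in Petrov and Feller: smooth $H=F-G$ against a nonnegative kernel with compactly supported Fourier transform (the F\'ejer-type kernel you chose is the standard one), bound the smoothed integral from above on the Fourier side via Parseval and from below on the physical side using monotonicity of $F$ and the Lipschitz bound on $G$, then optimize over the shift. The identified obstacles (handling the Parseval step for non-$L^1$ distribution functions via the Fourier--Stieltjes transforms and the boundary condition $H(\pm\infty)=0$, and tracking the constant) are exactly the points where the textbook argument requires care, and they are dealt with there in the manner you suggest.
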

\subsection{Fa\`{a} di Bruno's formula}\label{sec:Faa}


(Multivariate) Faà di Bruno's formula gives an explicit equation for the higher order (partial) derivatives of the composition
$h(x_{1},\ldots,x_{d})=f(g_{1}(x_{1},\ldots,x_{d}),\ldots,g_{m}(x_{1},\ldots,x_{d})),$
where $g_i:\mathbb{R}^{d}\rightarrow\mathbb{R}$
and $f:\mathbb{R}^{m}\rightarrow\mathbb{R}$ are differentiable a sufficient number of times.

To introduce the formula, we recall some multivariate notations.
For $\mathbf{\nu}\in(\mathbb{N}\cup\{0\})^{d}$
and $\mathbf{x}\in\mathbb{R}^{d}$, we define
$|\mathbf{\nu}|=\sum_{i=1}^{d}\nu_{i}$, $\mathbf{\nu}!=\prod_{i=1}^{d}\nu_{i}!$,
$D^{\mathbf{\nu}}_{\mathbf{x}}=\frac{\partial^{|\mathbf{\nu}|}}{\partial x_{1}^{\nu_{1}}\cdots\partial x_{d}^{\nu_{d}}}$
for $|\mathbf{\nu}|>0$, and $\mathbf{x}^{\nu}=\prod_{i=1}^{d}x_{i}^{\nu_{i}}$. In addition,
if $\mathbf{\mu}=(\mu_{1},\ldots,\mu_{d})$
and $\mathbf{\nu}=(\nu_{1},\ldots,\nu_{d})$ are both in $(\mathbb{N}\cup\{0\})^{d}$,
we write $\mathbf{\mu}\prec\mathbf{\nu}$ provided one of the following holds:
(i) $|\mathbf{\mu}|<\mathbf{\nu}|$;
(ii) $|\mathbf{\mu}|=|\mathbf{\nu}|$ and $\mu_{1}<\nu_{1}$ or
(iii) $|\mathbf{\mu}|=|\mathbf{\nu}|$, $\mu_{1}=\nu_{1},\ldots,\mu_{k}=\nu_{k}$
and $\mu_{k+1}<\nu_{k+1}$ for some $1\leq k<d$.

Let $\mathbf{\nu}=(\nu_{1},\ldots,\nu_{d})\neq(0,\ldots,0)$,
Setting $h_{\mathbf{\nu}}=D_{\mathbf{x}}^{\mathbf{\nu}}h(\mathbf{x})$,
$f_{\mathbf{\lambda}}=D_{\mathbf{y}}^{\mathbf{\lambda}}f(\mathbf{y})$,
where $\mathbf{y}=(g_{1}(\mathbf{x}),\ldots,g_{m}(\mathbf{x}))$
and $\mathbf{\nu},\mathbf{\lambda}\in(\mathbb{N}\cup\{0\})^{d}$,
and $\mathbf{g}_{\mathbf{\mu}}=(D_{\mathbf{x}}^{\mathbf{\mu}}g_{1}(\mathbf{x}),\ldots,D_{\mathbf{x}}^{\mathbf{\mu}}g_{m}(\mathbf{x}))$. Then, the multivariate Fa\`{a} di Bruno's formula is given as follows.

\begin{lemma}[Multivariate Fa\`{a} di Bruno's formula, cf.  (2.4) in \cite{CS96}]
\begin{equation*}
h_{\mathbf{\nu}}=\sum_{1\leq|\mathbf{\lambda}|\leq|\mathbf{\nu}|}f_{\mathbf{\lambda}}
\sum_{p(\mathbf{\nu},\mathbf{\lambda})}\mathbf{\nu}!
\prod_{j=1}^{|\mathbf{\nu}|}\frac{[\mathbf{g}_{\Bell_{j}}]^{\mathbf{k}_{j}}}
{(\mathbf{k}_{j}!)[\Bell_{j}!]^{|\mathbf{k}_{j}|}},
\end{equation*}
where
\begin{align*}
p(\mathbf{\nu},\mathbf{\lambda})
&=\bigg\{\left(\mathbf{k}_{1},\ldots,\mathbf{k}_{|\mathbf{\nu}|};\Bell_{1},\ldots,\Bell_{|\mathbf{\nu}|}\right):
\text{for some $1\leq s\leq |\mathbf{\nu}|$,}
\\
&\qquad
\text{$\mathbf{k}_{i}=0$ and $\Bell_{i}=\mathbf{0}$ for $1\leq i\leq |\mathbf{\nu}|-s$;
$|\mathbf{k}_{i}|>0$ for $|\mathbf{\nu}|-s+1\leq i\leq |\mathbf{\nu}|$;}
\\
&\qquad
\text{and $\mathbf{0}\prec\Bell_{|\mathbf{\nu}|-s+1}\prec\cdots\prec\Bell_{|\mathbf{\nu}|}$ are such that }
\sum_{i=1}^{|\mathbf{\nu}|}\mathbf{k}_{i}=\mathbf{\lambda},
\sum_{i=1}^{|\mathbf{\nu}|}|\mathbf{k}_{i}|{\Bell}_{i}=\mathbf{\nu}\bigg\}.
\end{align*}
\end{lemma}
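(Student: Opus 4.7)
The plan is to prove the identity by induction on $|\boldsymbol{\nu}|$. The base case $|\boldsymbol{\nu}|=1$ is the ordinary chain rule $D_{\mathbf{x}}^{e_r} h = \sum_{k=1}^{m} f_{e_k}\, D_{\mathbf{x}}^{e_r} g_k$, which matches the claimed formula: for $|\boldsymbol{\nu}|=1$ the only eligible $\boldsymbol{\lambda}$'s are the unit vectors $e_k$, and $p(e_r,e_k)$ consists of the single composition with one nontrivial block $\Bell_{|\boldsymbol{\nu}|}=e_r$ and multiplicity $\mathbf{k}_{|\boldsymbol{\nu}|}=e_k$, so the coefficient $\boldsymbol{\nu}!/[(\mathbf{k}_{|\boldsymbol{\nu}|}!)(\Bell_{|\boldsymbol{\nu}|}!)^{|\mathbf{k}_{|\boldsymbol{\nu}|}|}]$ collapses to $1$.

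For the inductive step, I would differentiate both sides with respect to some coordinate $x_r$ and compare. On the right-hand side, applying the product rule produces three kinds of contributions: (i) differentiating the outer factor $f_{\boldsymbol{\lambda}}$ in $\mathbf{y}$ and chaining back through $\mathbf{g}$ produces $\sum_k f_{\boldsymbol{\lambda}+e_k}\, D_{\mathbf{x}}^{e_r}g_k$, inserting a fresh block $\Bell=e_r$ with multiplicity $e_k$ into the composition; (ii) differentiating one of the $|\mathbf{k}_j|$ copies of $\mathbf{g}_{\Bell_j}$ replaces that copy by $\mathbf{g}_{\Bell_j+e_r}$, which may either enlarge an adjacent block $\Bell_{j'}=\Bell_j+e_r$ already present or create a new block (splitting one unit off of $\mathbf{k}_j$); (iii) trivial combinatorial reshuffles so that the resulting $\Bell$-sequence remains strictly ordered according to $\prec$. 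One then has to verify that, once assembled, these contributions reproduce exactly the right-hand side with $\boldsymbol{\nu}$ replaced by $\boldsymbol{\nu}+e_r$.

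The main obstacle will be the combinatorial bookkeeping of coefficients. Concretely, one must show that, for each fixed $\boldsymbol{\mu}$ and each composition in $p(\boldsymbol{\nu}+e_r,\boldsymbol{\mu})$, the sum of the inductive contributions produces exactly the coefficient $(\boldsymbol{\nu}+e_r)!/\prod_j[(\mathbf{k}_j!)(\Bell_j!)^{|\mathbf{k}_j|}]$. This requires classifying the elements of $p(\boldsymbol{\nu}+e_r,\boldsymbol{\mu})$ according to how the extra $e_r$ was introduced (a brand-new block, an enlargement of some existing $\Bell_j$, or a split of some $\mathbf{k}_j$), mapping each class back to a term arising in (i)--(iii) from some element of $p(\boldsymbol{\nu},\boldsymbol{\lambda})$, and checking that the combinatorial multiplicities telescope correctly. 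The algebra boils down to identities like $(\boldsymbol{\nu}+e_r)!=(\nu_r+1)\,\boldsymbol{\nu}!$ combined with cancellations between $|\mathbf{k}_j|$ (counting which copy of $\mathbf{g}_{\Bell_j}$ got differentiated) and $\mathbf{k}_j!$ (accounting for the split), together with $(\Bell_j+e_r)!=(\ell_{j,r}+1)\,\Bell_j!$.

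If this direct combinatorial induction becomes too cumbersome, a cleaner alternative is to reduce to the univariate Fa\`{a} di Bruno formula. For a direction $\mathbf{v}\in\mathbb{R}^d$, define $\Phi(t):=f(g_1(\mathbf{x}+t\mathbf{v}),\ldots,g_m(\mathbf{x}+t\mathbf{v}))$, compute $\Phi^{(n)}(0)$ via the classical single-variable formula in terms of ordinary Bell polynomials of $(\partial_t^j g_k(\mathbf{x}+t\mathbf{v})|_{t=0})$, and then extract the mixed partial $D_{\mathbf{x}}^{\boldsymbol{\nu}}h(\mathbf{x})$ by expanding each $\partial_t^j g_k(\mathbf{x}+t\mathbf{v})|_{t=0}=\sum_{|\boldsymbol{\mu}|=j}\binom{j}{\boldsymbol{\mu}}\mathbf{v}^{\boldsymbol{\mu}}D_{\mathbf{x}}^{\boldsymbol{\mu}}g_k(\mathbf{x})$ and reading off the coefficient of $\mathbf{v}^{\boldsymbol{\nu}}$ (equivalently, via polarization). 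The sets $p(\boldsymbol{\nu},\boldsymbol{\lambda})$ then appear naturally from indexing partitions of $n=|\boldsymbol{\nu}|$ refined by the multi-index structure in $\mathbf{v}$, and the multinomial factors produce the stated coefficients with substantially less combinatorial overhead than the direct induction.
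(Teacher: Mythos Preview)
The paper does not prove this lemma at all: it is stated in Appendix~\ref{sec:Faa} as a known result, with the reference to \cite{CS96} (Constantine and Savits, 1996) given in lieu of a proof. So there is no ``paper's own proof'' to compare your proposal against.

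That said, your proposal is sound. The inductive argument you outline is essentially the strategy used in \cite{CS96} itself, where the formula is established by differentiating and carefully tracking how the composition data $(\mathbf{k}_1,\ldots,\mathbf{k}_{|\boldsymbol{\nu}|};\Bell_1,\ldots,\Bell_{|\boldsymbol{\nu}|})$ evolves under one more partial derivative; the combinatorial bookkeeping you describe in (i)--(iii) is exactly the content of that argument. Your alternative route via polarization (reducing to the univariate Fa\`{a} di Bruno formula along a direction $\mathbf{v}$ and extracting the coefficient of $\mathbf{v}^{\boldsymbol{\nu}}$) is also a legitimate and well-known approach that avoids much of the case analysis, at the cost of requiring a careful justification that the coefficient extraction yields the stated index set $p(\boldsymbol{\nu},\boldsymbol{\lambda})$ with the correct ordering convention on the $\Bell_j$'s. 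Either route would constitute a valid proof, but for the purposes of this paper the lemma is background and a citation suffices.
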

In the above formula, the vectors $\mathbf{k}$ are $m-$dimensional, the vectors $\Bell$ are $d-$dimensional, and we always set $0^{0}=1$.

For the special case $d=m=1$, we have the one-dimensional Fa\`{a} di Bruno's formula,
which has a more explicit expression:

\begin{lemma}[Univariate Fa\`{a} di Bruno's formula] \label{lem:faadi}
\begin{equation*}
\frac{d^{n}}{dx^{n}}f(g(x))
=\sum_{\mathcal{S}_{n}}\frac{n!}{m_{1}!1!^{m_{1}}m_{2}!2!^{m_{2}}\cdots m_{n}!n!^{m_{n}}}
\cdot
f^{(m_{1}+\cdots+m_{n})}(g(x))\cdot
\prod_{j=1}^{n}\left(g^{(j)}(x)\right)^{m_{j}},
\end{equation*}
where the sum is over the set $\mathcal{S}_{n}$ consisting of
all the $n$-tuples of non-negative integers $(m_{1},\ldots,m_{n})$
satisfying the following constraint:
\begin{equation*}
1\cdot m_{1}+2\cdot m_{2}+3\cdot m_{3}+\cdots+n\cdot m_{n}=n.
\end{equation*}
\end{lemma}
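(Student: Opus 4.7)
The plan is to prove the formula by first establishing an equivalent ``set-partition form'' and then regrouping its terms by combinatorial type. Specifically, I would show by induction on $n$ that
\begin{equation*}
\frac{d^{n}}{dx^{n}}f(g(x)) = \sum_{\pi \in \mathcal{P}_{n}} f^{(|\pi|)}(g(x)) \prod_{B \in \pi} g^{(|B|)}(x),
\end{equation*}
where $\mathcal{P}_{n}$ denotes the collection of all set partitions of $\{1,2,\ldots,n\}$, $|\pi|$ is the number of blocks of $\pi$, and $|B|$ is the size of a block $B$. From this identity, Fa\`a di Bruno's formula follows by collecting partitions with the same profile.

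For the base case $n=1$, the identity reduces to the chain rule $\frac{d}{dx}f(g(x)) = f'(g(x))g'(x)$, since the unique partition of $\{1\}$ is a single block of size one. For the inductive step, I would differentiate both sides of the formula for $n$. Applying the chain rule to $f^{(|\pi|)}(g(x))$ produces, for each $\pi \in \mathcal{P}_{n}$, a term of the form $f^{(|\pi|+1)}(g(x))\, g'(x) \prod_{B\in\pi} g^{(|B|)}(x)$, which corresponds bijectively to the partition of $\{1,\ldots,n+1\}$ obtained from $\pi$ by adjoining $\{n+1\}$ as a new singleton block. Applying the product rule to $\prod_{B\in\pi} g^{(|B|)}(x)$ produces, for each block $B \in \pi$, a term in which $g^{(|B|)}(x)$ is replaced by $g^{(|B|+1)}(x)$; this corresponds to the partition of $\{1,\ldots,n+1\}$ obtained from $\pi$ by inserting $n+1$ into the block $B$. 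Every partition of $\{1,\ldots,n+1\}$ is obtained in exactly one way by one of these two operations (either $n+1$ is a singleton, or it sits in a uniquely determined block together with elements of $\{1,\ldots,n\}$), so the sum reassembles to $\sum_{\pi'\in\mathcal{P}_{n+1}} f^{(|\pi'|)}(g(x)) \prod_{B\in\pi'} g^{(|B|)}(x)$, completing the induction.

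Next, I would regroup the partition sum by the profile $(m_{1},\ldots,m_{n})$, where $m_{j}$ records the number of blocks of $\pi$ of size $j$. Such a profile is admissible precisely when $\sum_{j} j m_{j}=n$, i.e.\ $(m_{1},\ldots,m_{n}) \in \mathcal{S}_{n}$; and for any partition of this profile one has $|\pi| = m_{1}+\cdots+m_{n}$ and $\prod_{B\in\pi} g^{(|B|)}(x) = \prod_{j=1}^{n} (g^{(j)}(x))^{m_{j}}$. The number of set partitions of $\{1,\ldots,n\}$ with profile $(m_{1},\ldots,m_{n})$ equals
\begin{equation*}
\frac{n!}{\prod_{j=1}^{n} (j!)^{m_{j}} \, m_{j}!},
\end{equation*}
by the standard count: choose the elements of each block by distributing the $n$ items into an ordered list of blocks (contributing $n!$), divide by $(j!)^{m_{j}}$ to remove the internal orderings within blocks of size $j$, and divide by $m_{j}!$ to remove the orderings among the $m_{j}$ blocks of equal size $j$. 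Substituting these two observations into the partition form yields precisely the stated expression.

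The main obstacle is the combinatorial bookkeeping in the inductive step, namely the bijection between partitions of $\{1,\ldots,n+1\}$ and the terms produced by differentiating the $n$-th order expansion. Everything else is routine: the chain rule, product rule, and the standard multinomial-style count of partitions by profile. Since the statement is classical, the argument should be presentable in a compact form once the set-partition viewpoint is adopted.
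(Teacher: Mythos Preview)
Your proof is correct and follows the standard ``set-partition'' route to Fa\`{a} di Bruno's formula: the induction establishing $\frac{d^{n}}{dx^{n}}f(g(x)) = \sum_{\pi\in\mathcal{P}_{n}} f^{(|\pi|)}(g(x))\prod_{B\in\pi} g^{(|B|)}(x)$ is clean and your bijection between the differentiated terms and the partitions of $\{1,\ldots,n+1\}$ is exactly right, as is the multinomial count $n!/\prod_{j}(j!)^{m_{j}}m_{j}!$ of partitions with a given block profile.

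As for comparison with the paper: there is nothing to compare. The paper does not prove this lemma; it is listed in an appendix of ``known results'' (alongside Esseen's smoothing inequality and Laplace's method) and simply cited, with the multivariate version attributed to \cite{CS96}. Your write-up therefore supplies a self-contained proof where the paper gives none, and the set-partition approach you chose is arguably the most transparent one for this classical identity.
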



\subsection{Laplace's Method} \label{sec:Laplace's}
Laplace's Method is useful in estimating integrals of the form $I(\lambda)=\int_{a}^{b}e^{-\lambda p(t)}q(t)dt$ as $\lambda \rightarrow \infty$. For two functions $f_1, f_2$, we write $f_1(\lambda) \sim f_2(\lambda )$ if $f_1(\lambda)/f_2(\lambda)$ tends to unity as $\lambda \rightarrow \infty$.
The following result can be found in e.g. \cite[Section 3.7]{Olver}.

 \begin{lemma}[Laplace's Method]\label{Laplace'sThm}
Suppose that
\begin{itemize}
\item [(1)] $p(t)>p(a)$ for $t\in(a,b)$ and the minimum of $p(t)$ is only approached
at $t=a$.

\item [(2)] $p'(t)$, $q'(t)$ are continuous in a neighborhood of $t=a$ except possibly at $t=a$.

\item [(3)] As $t\rightarrow a^{+}$,
\begin{equation*}
p(t)\sim p(a)+\sum_{k=0}^{\infty}p_{k}(t-a)^{k+\mu},
\qquad
q(t)\sim\sum_{k=0}^{\infty}q_{k}(t-a)^{k+\sigma-1},
\end{equation*}
where $\mu,\sigma>0$, $p_{0}\neq 0$ and $q\neq 0$. Also assume
that we can differentiate $p(t)$ and
\begin{equation*}
p'(t)\sim\sum_{k=0}^{\infty}(k+\mu)p_{k}(t-a)^{k+\mu-1}.
\end{equation*}

\item [(4)] $\int_{a}^{b}e^{-\lambda p(t)}q(t)dt$ converges absolutely for all sufficiently large $\lambda$.
\end{itemize}
Then, we have
\begin{equation*}
I(\lambda)=\int_{a}^{b}e^{-\lambda p(t)}q(t)dt\sim e^{-\lambda p(a)}\sum_{k=0}^{\infty}
\Gamma\left(\frac{k+\sigma}{\mu}\right)\frac{a_{k}}{\lambda^{\frac{k+\sigma}{\mu}}},  \qquad
\text{as $\lambda \rightarrow \infty$}.
\end{equation*}
where $v=p(t)-p(a)$ and
\begin{equation*}
f(v)=\frac{q(t)}{p'(t)}\sim\sum_{k=0}^{\infty}a_{k}v^{\frac{k+\sigma-\mu}{\mu}},
\qquad
\text{as $v\rightarrow 0^{+}$}.
\end{equation*}
\end{lemma}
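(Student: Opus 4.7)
The plan is to execute the classical Laplace-type argument: reduce the integral to a small neighborhood of the minimizer $t=a$, change variables so that the exponent becomes linear, expand the resulting amplitude in the new variable, and integrate termwise against a Gamma integral. All four hypotheses play clearly delineated roles: (1) localizes the integral, (4) controls the global tail, (3) provides the expansion of the amplitude, and (2) justifies the change of variable.

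First I would fix a small $\delta>0$ and split $I(\lambda)=I_1(\lambda)+I_2(\lambda)$ with $I_1=\int_a^{a+\delta}e^{-\lambda p(t)}q(t)\,dt$ and $I_2=\int_{a+\delta}^b e^{-\lambda p(t)}q(t)\,dt$. By (1), $m_\delta:=\inf\{p(t):t\in[a+\delta,b]\}>p(a)$; choosing $\lambda_0$ large enough that $\int_a^b e^{-\lambda_0 p(t)}|q(t)|\,dt<\infty$ (possible by (4)), I estimate
\begin{equation*}
|I_2(\lambda)|\le e^{-(\lambda-\lambda_0)m_\delta}\int_a^b e^{-\lambda_0 p(t)}|q(t)|\,dt = e^{-\lambda p(a)}\,O\!\left(e^{-(m_\delta-p(a))\lambda}\right),
\end{equation*}
which is super-polynomially smaller than every term of the claimed series.

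For the main piece $I_1$, condition (3) forces $p_0>0$ (otherwise $p(t)<p(a)$ near $a$, contradicting (1)), and the differentiated expansion gives $p'(t)=\mu p_0(t-a)^{\mu-1}(1+o(1))>0$ on $(a,a+\delta]$ after possibly shrinking $\delta$, so $t\mapsto v:=p(t)-p(a)$ is a $C^1$ bijection of $(a,a+\delta]$ onto $(0,V]$ with $V:=p(a+\delta)-p(a)$. This substitution yields $I_1(\lambda)=e^{-\lambda p(a)}\int_0^V e^{-\lambda v}f(v)\,dv$ with $f(v)=q(t)/p'(t)$. Inverting $v\sim p_0(t-a)^\mu$ as a Puiseux series $t-a=(v/p_0)^{1/\mu}(1+O(v^{1/\mu}))$ and substituting into the expansions of $q$ and $p'$ from (3) produces the asserted expansion $f(v)\sim\sum_{k\ge 0}a_k v^{(k+\sigma-\mu)/\mu}$. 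Truncating $f(v)=\sum_{k=0}^{N-1}a_k v^{(k+\sigma-\mu)/\mu}+R_N(v)$ with $|R_N(v)|\le C_N v^{(N+\sigma-\mu)/\mu}$ on $(0,V]$, the standard identity $\int_0^\infty e^{-\lambda v}v^{\alpha-1}\,dv=\Gamma(\alpha)/\lambda^\alpha$ (which applies because $\sigma>0$) combined with the exponentially small tail $\int_V^\infty e^{-\lambda v}v^{\alpha-1}\,dv=O(e^{-\lambda V/2})$ gives
\begin{equation*}
I_1(\lambda)=e^{-\lambda p(a)}\sum_{k=0}^{N-1}\Gamma\!\left(\frac{k+\sigma}{\mu}\right)\frac{a_k}{\lambda^{(k+\sigma)/\mu}}+e^{-\lambda p(a)}\,O\!\left(\lambda^{-(N+\sigma)/\mu}\right),
\end{equation*}
and combining with the bound on $I_2$ yields the conclusion for arbitrary $N$.

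The hard part will be the series-inversion-and-substitution step: extracting a genuine (not merely formal) asymptotic expansion of $f(v)=q(t)/p'(t)$ in the Puiseux variable $v^{1/\mu}$ from the $t-a$ expansions in (3). Fractional exponents make the composition delicate, and one must verify that the coefficients $a_k$ produced by formal manipulation are actually those of an asymptotic expansion on some $(0,v_0]$, with remainder bounds uniform enough to justify termwise integration against $e^{-\lambda v}$. The cleanest route is probably Watson-lemma-style induction on $N$: use finite Taylor truncations of the expansions of $p,p',q$ from (3) to derive $|R_N(v)|\le C_N v^{(N+\sigma-\mu)/\mu}$ directly, avoiding any appeal to convergence of the formal series and exploiting precisely the differentiability assumption in (2) plus the explicit expansion of $p'$ in (3).
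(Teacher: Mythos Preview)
The paper does not prove this lemma at all; it merely cites it as a standard result from \cite[Section 3.7]{Olver}. Your proposal is the classical textbook argument (localize near the minimizer, change variables to $v=p(t)-p(a)$, expand the amplitude $q/p'$ as a Puiseux series in $v$, and apply Watson's lemma termwise), which is essentially what Olver does, so there is nothing to compare against and your outline is correct.
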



\begin{thebibliography}{31}

\bibitem{AitSahalia2015}
A\"{i}t-Sahalia, Y., Cacho-Diaz, J. and Laeven, R.J., 2015.
Modeling financial contagion using mutually exciting jump processes.
\textit{Journal of Financial Economics}, 117(3), pp.585-606.


\bibitem{Bahadur1960}
Bahadur, R.R. and Rao, R.R., 1960.
On deviations of the sample mean.
\textit{Annals of Mathematical Statistics}, 31(4), pp.1015-1027.

\bibitem{BR2002}
Bercu, B. and A. Rouault, 2002.
Sharp large deviations for the Ornstein-Uhlenbeck process.
\textit{SIAM Theory of Probability and Its Applications},
\textbf{46}, 1-19.

\bibitem{Bordenave2007}
Bordenave, C., and Torrisi, G. L., 2007.
Large deviations of Poisson cluster processes. 
\textit{Stochastic Models}, 23(4), 593-625.

\bibitem{Bowsher2007}
Bowsher, C.G., 2007.
Modelling security market events in continuous time: Intensity based, multivariate point process models.
\textit{Journal of Econometrics}, 141(2), pp.876-912.

\bibitem{CS93}
Chaganty, N. R. and J. Sethuraman, 1993.
Strong large deviation and local limit theorems.
\textit{Annals of Probability},
\textbf{21}, 1671-1690.

\bibitem{CS96}
Constantine, G. M. and T. H. Savits, 1996.
A multivariate Faa di Bruno formula with applications.
\textit{Transactions of the American Mathematical Society},
\textbf{348}. 503-520.

\bibitem{Cramer}
Cram{\'e}r, H., 1938.
Sur un nouveau th{\'e}oreme-limite de la th{\'e}orie des probabilit{\'e}s.
\textit{Actualit{\'e}s scientifiques et industrielles}, 736(5-23), p.115.



\bibitem{Daw2018}
Daw, A. and Pender, J., 2018. 
Queues driven by Hawkes processes. \textit{Stochastic Systems}, 8(3), pp.192-229.

\bibitem{Delbaen2014}
Delbaen, F., Kowalski, E. and Nikeghbali, A., 2014.
Mod-$\phi$ convergence.
\textit{International Mathematics Research Notices}, 2015(11), pp.3445-3485.


\bibitem{Dembo}
Dembo, A. and O. Zeitouni., 1998.
\textit{Large Deviations Techniques and Applications}, 2nd Edition, Springer, New York.

\bibitem{Donsker}
Donsker, M. D. and Varadhan, S. R. S., 1975.
Asymptotic evaluations of certain Markov process expectations for large time, I.
\textit{Communications on Pure and Applied Mathematics},
\textbf{28}, 1-47.



\bibitem{Duffie2000}
Duffie, D., Pan, J. and Singleton, K., 2000.
Transform analysis and asset pricing for affine jump--diffusions.
\textit{Econometrica}, 68(6), pp.1343-1376.


\bibitem{Errais}
Errais, E., K. Giesecke, and Goldberg, L., 2010.
Affine point processes and portfolio credit risk,
\textit{SIAM Journal on Financial Mathematics},
1, 642-665.


\bibitem{modphi}
Feray, V., Meliot, P.-L., and A. Nikeghbali., 2015.
Mod-$\phi$ convergence I: Normality zones and precise deviations.
\textit{arXiv:1304.2934}.



\bibitem{Fox2016}
Fox, E.W., Short, M.B., Schoenberg, F.P., Coronges, K.D. and Bertozzi, A.L., 2016.
Modeling e-mail networks and inferring leadership using self-exciting point processes.
\textit{Journal of the American Statistical Association}, 111(514), pp.564-584.

\bibitem{GaoZhu2016b}
Gao, X. and Zhu, L., 2018.
A functional central limit theorem for stationary Hawkes processes and its application to infinite-server queues.
\textit{Queueing Systems},
\textbf{90}, 161-206.


%

%


\bibitem{Harrington1978}
Harrington, D. P., 1978.
Almost sure convergence of continuous time branching processes.
\textit{Stochastic Processes and their Applications}, 6(3), 317-322.

\bibitem{Hawkes}
Hawkes, A. G., 1971.
Spectra of some self-exciting and mutually exciting point processes,
\textit{Biometrika}, 58, 83-90.

\bibitem{Hewlett2006}
Hewlett. P., 2006.
Clustering of order arrivals, price impact and trade path optimization.
In: Workshop on Financial Modeling with Jump Processes,
Ecole Polytechnique, pp. 6-8.


\bibitem{Hult2010}
Hult, H., and Samorodnitsky, G., 2010.
Large deviations for point processes based on stationary sequences with heavy tails.
\textit{Journal of Applied Probability}, 47(1), 1-40.

\bibitem{Jacod2011}
Jacod, J., Kowalski, E. and Nikeghbali, A., 2011.
Mod-Gaussian convergence: new limit theorems in probability and number theory.
In: \textit{Forum Mathematicum} (Vol. 23, No. 4, pp. 835-873).

\bibitem{Joutard}
Joutard, C., 2013.
Strong large deviations for arbitrary sequences of random variables.
\textit{Annals of the Institute of Statistical Mathematics},
\textbf{65}, 49-67.

\bibitem{Karatzas2012}
Karatzas, I., and Shreve, S., 2012.
\textit{Brownian Motion and Stochastic Calculus}. (Vol. 113). Springer New York.

\bibitem{Koops2018}
Koops, D. T., Mayank Saxena, O. J. Boxma, and Michel Mandjes., 2018. Infinite-server queues with Hawkes input. \textit{Journal of Applied Probability}, 55(3), 920-943.


\bibitem{Olver}
Olver, F., 1997.
\textit{Asymptotics and Special Functions}.
2nd Edition, CRC Press.

\bibitem{Petrov}
Petrov, V.V., 1975. \textit{Sums of Independent Random Variables}.
Springer-Verlag. Berlin.

\bibitem{Pinto2015}
Pinto, J.C.L., Chahed, T. and Altman, E., 2015.
Trend detection in social networks using Hawkes processes.
\textit{Proceedings of the 2015 IEEE/ACM International Conference on Advances in Social Networks Analysis and Mining.} pp. 1441-1448. ACM.



\bibitem{VaradhanLDP}
Varadhan, S. R. S., 1966.
Asymptotic probability and differential equations.
\textit{Communications on Pure and Applied Mathematics},
\textbf{19}, 261-286.

%


\bibitem{Zhang}
Zhang, X., Blanchet, J., Giesecke, K., and P. W. Glynn., 2015.
Affine point processes: Approximation and efficient simulation.
\textit{Mathematics of Operations Research},
\textbf{40}(4), 797-819.


\bibitem{ZhuI}
Zhu, L., 2015.
Large deviations for Markovian nonlinear Hawkes Processes.
\textit{Annals of Applied Probability},
\textbf{25}, 548-581.
\end{thebibliography}
\end{document}